\newtheorem{theorem}{Theorem}
\newtheorem{lemma}[theorem]{Lemma}
\newtheorem{remark}[theorem]{Remark}
\newtheorem{corollary}[theorem]{Corollary}
\def\R{\mathbb{R}}
\def\N{\mathbb{N}}
\def\e{\varepsilon}
\title{From indirect  to direct taxis by fast reaction limit}
\author{J. Ignacio Tello $^*$ and Dariusz Wrzosek $^{**}$  \\
$^*$ {\small Department of Fundamental Mathematics. School of Sciences}\\
{ \small National University of Distance Education. Madrid, Spain} \\
$^{**}$ {\small Institute of Applied Mathematics and Mechanics},\\
 {\small University of Warsaw}}
\begin{document}
\maketitle
 
\begin{abstract}
 Many ecological population models consider taxis as the directed movement of animals in response to a stimulus. The taxis is named  direct  if the animals are   guided by the density gradient of some other population or indirect  if they are guided by the density of a chemical secreted by individuals of the other population.  Let $u$ and $v$ denote the densities of two populations and $w$ the density of the chemical secreted by individuals in the $v$ population. We consider a bounded,  open  set $\Omega \subset \mathbb{R}^N$ with regular boundary and prove that for  the space dimension $N\leq 2$ the solution to the  Lotka-Volterra competition  model with repulsive   indirect  taxis and homogeneous Neumann boundary conditions
 $$ 
 \left\{ \begin{array}{l} 
   u_t - d_u\Delta u  = \chi \nabla \cdot u \nabla w +\mu_1u(1-u-a_1v)\,,\\ [2mm] 
   v_t - d_v\Delta v = \mu_2v(1-v-a_2u)\,,\\ [2mm]
  \varepsilon (  w_t - d_w\Delta w )=    v-    w\, , 
 \end{array}
\right.$$
 converges to the solution of  repulsive  direct-taxis model: 
 $$  \left\{ \begin{array}{l} 
   u_t - d_u\Delta u = \chi \nabla \cdot u \nabla v +\mu_1u(1-u-a_1v)\,,\\ [2mm] 
   v_t - d_v\Delta v = \mu_2v(1-v-a_2u)\,
  \end{array}
\right.$$
when $\varepsilon\longrightarrow 0$. For space dimension $N\geq 3$ we use the compactness argument to show that the result holds in some weak sense. A similar result is also proved for a typical prey-predator model with prey taxis and logistic growth of predators.
\end{abstract}
\textbf{Keywords:} Lotka-Volterra competition model, prey-taxis model, chemotaxis., fast reaction limit, compactness method.  

\section{Introduction}
 The following system is an extension of the classical Lotka-Volterra competition model considering the diffusive migration of both species,  with densities $U$ and $V$. The model also includes    avoidance strategy of one of the populations  which moves towards the lower concentration   of the chemical $W$ secreted by the other competitor:
 \begin{align}  \label{U1}
   U_{\tilde{t}} - D_u\Delta U &= \chi_0 \nabla \cdot (U \nabla W) +\alpha_1 U -\alpha_2U^2 -\alpha_3UV \,, \\ \label{U2}
   V_{\tilde{t}} - D_v\Delta V &= \beta_1 V -\beta_2 V^2 -\beta_3UV
   \,,  \\ \label{U3}
   W_{\tilde{t}} - D_w\Delta W &= -\alpha W +\lambda V \,,
\end{align}
 with the homogeneus Neumann boundary conditions on the boundary of a bounded domain $\Omega \subset
\mathbb{R}^N $ with smooth boundary. All parameters, diffusion constants $D_u\,,D_v\,,D_w$ and kinetic parameters $\alpha_i\,,\beta_i$ (for $i=1,2$) as well as the chemotactic sensitivity coefficient $\chi_0$ are assumed to be positive constants.  The system is supplemented with initial conditions for $\tilde{t}=0$ and $\tilde{x}\in \Omega$.
This type of transport mechanism based on a chemical signaling  is called indirect taxis \cite{TWr1}, in contrast to direct taxis, in which the escape direction  is guided by the density gradient of the competitor itself without any intermediary mechanisms 
\begin{align} \label{UU1}
   U_{\tilde{t}} - D_u\Delta U &= \chi \nabla \cdot 
   U \nabla V +\alpha_1 U -\alpha_2U^2 -\alpha_3UV\,,\\ \label{UU2}
   V_{\tilde{t}} - D_v\Delta V &= \beta_1 V -\beta_2 V^2 -\beta_3UV\,. 
\end{align}
The concept of indirect taxis, already introduced in the ecological context in \cite{TWr1}, has been used in a considerable number of works; we refer the reader to only a few of them \cite{Ahnand,Ahnand2,Li, MiWr2, MiWr3} and the references therein. We also refer to surveys \cite{Bellomo1,Bellomo2} on the mathematical theory of chemotaxis and the wide spectrum of applications in the biological and social sciences.  

It turns out that the existence of global in time classical solutions to system (\ref{U1})-(\ref{U3}) is relatively easy to prove (c.f. \cite{TWr}) for any space dimension $N\geq 1$ while for system (\ref{UU1})-(\ref{UU2}) the existence of classical solutions is more complicated to prove and known only for $N\leq 2$. 
In this article we would like to investigate whether solutions of system (\ref{UU1})-(\ref{UU2}) can be obtained in a fast-reaction limit  from the solutions to  system (\ref{U1})-(\ref{U3}). To this end we introduce the dimensionless variables $(u(x,t)\,,v(x,t)\,,w(x,t))$ with $x=\frac{\tilde{x}}{L}\,, t=\frac{\tilde{t}}{\tau}$
$$ u\left(\frac{\tilde{x}}{L},\frac{\tilde{t}}{\tau}\right) =\frac{\alpha_2}{\alpha_1} U(\tilde{x}\,,\tilde{t})\,,\quad v\left(\frac{\tilde{x}}{L},\frac{\tilde{t}}{\tau}\right)=\frac{\beta_2}{\beta_1}V\left(\frac{\tilde{x}}{L},\frac{\tilde{t}}{\tau}\right)\,,\quad w\left(\frac{\tilde{x}}{L},\frac{\tilde{t}}{\tau}\right)=\frac{W(\tilde{x}\,,\tilde{t})}{W^*}$$
where $W^*$ is some reference  density. Setting
$$ d_u=\frac{D_u\tau}{L^2}\,,d_v=\frac{D_v\tau}{L^2}\,, d_w=\frac{D_w\tau}{L^2}\,, \chi=\frac{\tau\chi_0 W^*}{L^2}$$
and 
$$ \mu_1=\alpha_1\tau\,, \mu_2=\beta_1\tau\,,a_1=\frac{\alpha_3\beta_1}{\beta_2\alpha_1}\,,a_2=\frac{\beta_3\alpha_1}{\alpha_2\beta_1}\,,\tilde{\alpha}=\alpha \tau\,, \tilde{\lambda}=\lambda\tau $$
we arrive  at 
\begin{align*} 
   u_t - d_u\Delta u &= \chi \nabla \cdot u \nabla w +\mu_1u(1-u-a_1v)\,,\\ 
   v_t - d_v\Delta v &= \mu_2v(1-v-a_2u)\,,\\
   w_t - d_w\Delta w &= \tilde{\lambda} v- \tilde{\alpha} w\,.
 \end{align*}
Notice that  
$d_u\,, d_v\,, \chi\,, \e\,,\mu_1\,, \mu_2\,,  a_1\,, a_2\,,\tilde{\lambda}\,, \tilde{\alpha} $ are positive constants.
Now, we are in a position to set $d_w=1$ as well as 
$$\tilde{\lambda}=\tilde{\alpha}=\frac{\alpha L^2}{d_w}=\frac{\lambda L^2}{d_w}=\frac{1}{\varepsilon}\,,\quad\mbox{for}\quad \e\in (0,1)\,,$$ 
to refer to the situation in which the rate of kinetic reactions in the $w$-equations significantly exceeds that of diffusion for $\left(\frac{1}{\e}\right)$ large enough. In the limiting case $\e\rightarrow 0 $ we obtain the so-called fast reaction limit studied in a slightly different context than ours (see \cite{Evans} as a benchmark and \cite{Murakawa} containing more resent results).  In this article we want to answer the question whether in any sense it is possible to pass to the limit, letting $\e\rightarrow 0 $,  from the solutions of the system 
 \begin{align} \label{ue1}
   u_{\e,t} - d_u\Delta u_\e &= \chi \nabla \cdot (u_\e \nabla w_\e ) +\mu_1 u_\e(1-u_\e-a_1v_\e)\,,\\ \label{ue2}
   v_{\e,t} - d_v\Delta v_\e &= \mu_2 v_\e(1 -v_\e-a_2u_\e)\,,\\ \label{ue3}
  \e w_{\e,t} - \e \Delta w_\e &=  v_\e- w_\e\,,
 \end{align}
to the solution of 
     \begin{align} \label{uu1}
   u_{t} - d_u\Delta u &= \chi \nabla \cdot (u \nabla v) +\mu_1 u(1-u-a_1v)\,,\\ \label{uu2}
   v_{t} - d_v\Delta v &= \mu_2 v(1 -v-a_2u)\,.
 \end{align}   
with suitably regular initial conditions 
and  homogeneous Neumann boundary conditions  for both systems. It is worth noticing that the limit problem can be viewed as a particular case of the famous Shigesada, Kawasaki and Teramoto system, which has been studied in the past decades in many papers among which we point to \cite{Yagi} and \cite[Sec. 2]{Le} or to \cite{MiWr} where system (\ref{uu1})-(\ref{uu2}) has been studied.

It is also worth mentioning that our problem falls into the broad class of small-parameter methods with the famous Tikhonov theorem as a benchmark (see the monograph \cite{BanLach} for survey and references). This is due to the fact that the passage to the limit with $\varepsilon$ is automatically related to the reduction of the number of equations and is commonly called the quasi-stationary approximation. It is worth emphasizing here that, unlike systems of ordinary differential equations, there is, to the best of our knowledge, no general theory regarding small parameter approximation for reaction-diffusion systems, and even much less is known in this context about reaction-diffusion systems with advection (chemotaxis). 

For $N\leq 2$ the classical unique global-in-time  solution to system (\ref{uu1})-(\ref{uu2})  exists for sufficiently regular initial condition (see \cite{Le}, \cite{Yagi} or \cite{MiWr}). 

It turns out that for the case $N\leq 2$ we can prove the convergence of the full sequence of solutions to system  (\ref{ue1})-(\ref{ue3}) to the solution of the limit problem (cf. Theorem \ref{L22}, Theorem \ref{lnu2} with Corollary \ref{lnu2cor}). Moreover if the following compatibility condition is satisfied by the initial data
\begin{equation}
\label{ic}
 u(\cdot,0)=u_\e(\cdot,0)=u_0(\cdot)\,, v(\cdot,0)=v_\e(\cdot,0)=w_\e(\cdot,0)=v_0(\cdot,0)\,,  
 \end{equation}
 we find  $\e$-dependent linear convergence estimates for any finite time interval $[0\,,T]$
\begin{align} \label{pp1}
 \sup_{t\in[0,T]} \|u(\cdot, t)-u_\e(\cdot, t)\|_2\leq \e \tilde{C}(T)\,, \\ 
 \label{pp4}
\| u- u_\e\|_{L^2(0\,,T:W^{1,2}(\Omega)} \leq \e \tilde{C}(T)\,,\\ \label{pp2}
 \sup_{t\in[0,T]} \|v(\cdot, t)-v_\e(\cdot, t)\|_{W^{1,2}(\Omega)}\leq \e \tilde{C}(T)\,,  \\ \label{pp3}
 \sup_{t\in[0,T]} \|\nabla v(\cdot, t)-\nabla w_\e(\cdot, t)\|_2\leq \e \tilde{C}(T)\,.  
 \end{align}
These estimates allow us to prove that the solution to (\ref{ue1})-(\ref{ue3}) converges in a standard $L^2(\Omega)$ weak sense to the solution to (\ref{uu1})-(\ref{uu2}). 

For $N>2$ we use the compactness method to show in Theorem \ref{Lw22} that,  when choosing a subsequence, the solution of (\ref{ue1})-(\ref{ue3}) converges to a solution of (\ref{uu1})-(\ref{uu2}) in a suitable weak sense. 
The methods used to analyze the relationship between competition models with direct and indirect taxis have been successfully applied in a similar context to the predator-prey model with logistic growth of prey and predator cf. Theorem \ref{TheoPred}.

\begin{remark} 
In general,   ecological models with direct and indirect taxis may have significantly distinct properties, in particular when the stability of a constant steady state is concerned. We refer the reader to \cite{WuWangShi}  where the predator-prey model with direct predator taxis (i.e. repulsive taxis of prey in reaction to the gradient of predator density) is studied with the Rosenzweig-MacArthur kinetics. It turns out that direct predator taxis do not affect the stability of the constant steady state, while in \cite[Remark 3.1]{MiWr3} it is shown that in a corresponding model with indirect taxis the coexistence steady state may become unstable due to indirect taxis and give rise to the Hopf bifurcation. A similar situation occurs for predator-prey models with direct and indirect prey taxis when logistic limitation is not taken into account for predator. For most of such models with realistic predator-prey kinetics, contrary to the case of direct taxis, which in fact stabilizes the steady state; for the case of indirect taxis, the chemotactic sensitivity coefficient $\chi$ large enough destabilizes the coexistence steady state. We refer the reader to \cite{Ahnand} and the references therein for further details.
\end{remark}

In the notation we will sometimes drop the arguments of time- and space-dependent functions writing $v(\cdot, t)$ or $v(t)$ or just $v$ instead of $v(x,t)$ etc. depending on the length of formulae in which they appear. For the same reason, sometimes we write $\int_0^T\int_\Omega v $ instead of $\int_0^T\int_\Omega v(x,t)dxdt$. The norm in space $L^p(\Omega)$, $p\in [1\,,\infty]$ will be denoted by $\Vert \cdot\Vert_p$, the norm in Sobolev space $W^{1,p}(\Omega)$ by $\Vert \cdot\Vert_{1,p}$ and the set $\Omega\times(0,T)$ by $\Omega_T$.

The paper is organized as follows. In Section 2 we derive $\varepsilon$- independent estimates that are valid for any dimension of space. Section 3 contains estimates for dimension $N\leq 2$. In Section 4 we first deal with the convergence of solutions when $\varepsilon \rightarrow 0$  for the case $N\leq 2$, then,  with the convergence of a subsequence of to a weak solution by means of compactness arguments for $N >2$. Section 5 is devoted to extending the results obtained for the competition model to the case of the predator-prey model with indirect and direct taxis.

\section{A priori  estimates (for any dimension)}
In this section we derive various estimates of the solutions to the system (\ref{ue1}) - (\ref{ue3}) which are uniform with respect to $\e$. Along the article,   we will use the following well-known differential inequalities quoted in the following lemmas for the convenience of the reader. 
\begin{lemma}\label{LA} \cite[Lemma 3.4]{Stinner}
     Let $T>0 $ or $T=\infty$,  $\tau\in (0\,,T) $ and  
   $h\in L^1_{loc}([0\,,T))$ is a non-negative function satisfying 
\[ \int_{t-\tau}^{t} h(s)ds \leq b\quad\mbox{for all}\;\; t\in [\tau\,,T)\,,
\]
with $b>0$  
     and $y:[0\,,T)\rightarrow \R_+$
a solution to the differential inequality  
\begin{equation} \label{ST1}
\frac{dy}{dt} +a y(t)\leq h(t)\quad\mbox{for a.e.}\quad t\in (0\,,T)
\end{equation}
for $a >0$. Then
\begin{equation} \label{y1}
 y(t)\leq \max\left\{y(0)+b\,,\frac{b}{a\tau} +2b\right\}\quad\mbox{for}\;\; t\in (0,T)\,.
\end{equation} 
\end{lemma}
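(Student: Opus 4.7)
The plan is to apply the integrating factor $e^{at}$ to the differential inequality \eqref{ST1}, reduce the estimate for $t>\tau$ to a one-step recurrence on windows of length $\tau$, and then close the argument by taking a supremum.

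Multiplying \eqref{ST1} by $e^{as}$ and integrating between $s_0$ and $s$ with $0\le s_0\le s<T$ yields
\[
y(s)\le e^{-a(s-s_0)}y(s_0)+\int_{s_0}^{s}e^{-a(s-r)}h(r)\,dr.
\]
For $s\in[0,\tau]$, I take $s_0=0$; since $h\ge 0$ and $\int_0^{\tau}h\le b$ (the hypothesis applied at $t=\tau$), the integral term is bounded by $b$, so
\[
y(s)\le y(0)+b.
\]
For $s\in(\tau,T)$, I take $s_0=s-\tau$ and use the window hypothesis directly to obtain
\[
y(s)\le e^{-a\tau}y(s-\tau)+b.
\]

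Setting $M(t):=\sup_{r\in[0,t]}y(r)$, I now combine both estimates. Given $\e>0$, choose $s_\e\in[0,t]$ with $y(s_\e)\ge M(t)-\e$. Either $s_\e\le\tau$, giving $y(s_\e)\le y(0)+b$, or $s_\e>\tau$, giving $y(s_\e)\le e^{-a\tau}y(s_\e-\tau)+b\le e^{-a\tau}M(t)+b$. Sending $\e\to 0$ produces the scalar inequality
\[
M(t)\le\max\bigl\{y(0)+b,\;e^{-a\tau}M(t)+b\bigr\}.
\]
In the second alternative I rearrange to $M(t)\le b/(1-e^{-a\tau})$.

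To recover the form stated in \eqref{y1}, I use the elementary estimate $e^{a\tau}\ge 1+a\tau$, which gives $1-e^{-a\tau}\ge a\tau/(1+a\tau)$ and hence
\[
\frac{1}{1-e^{-a\tau}}\le\frac{1}{a\tau}+1\le\frac{1}{a\tau}+2,
\]
so that $M(t)\le\max\{y(0)+b,\;b/(a\tau)+2b\}$, as claimed. The argument has no genuine obstacle; the only small subtlety is that the supremum $M(t)$ need not be attained, which is why I work with a near-maximizer $s_\e$ and pass to the limit, and that one must be slightly careful to apply the window hypothesis $\int_{t-\tau}^{t}h\le b$ only for $t\ge\tau$, invoking $h\ge 0$ to cover the short initial slab.
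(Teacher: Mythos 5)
Your proof is correct. Note that the paper itself gives no proof of this lemma -- it is quoted verbatim from \cite[Lemma 3.4]{Stinner} -- so there is nothing to compare against line by line; your argument (variation of constants on windows of length $\tau$, the near-maximizer trick for $M(t)=\sup_{[0,t]}y$, and the elementary bound $1-e^{-a\tau}\geq \tfrac{a\tau}{1+a\tau}$ to land exactly on the constant $\tfrac{b}{a\tau}+2b$) is the standard route and closes cleanly. Two minor points worth being explicit about: the integrating-factor step and the finiteness of $M(t)$ (needed to rearrange $M(t)\leq e^{-a\tau}M(t)+b$) both rest on $y$ being locally absolutely continuous, which is the intended reading of ``solution to \eqref{ST1}''; and since your estimate is uniform in $t$ over each compact $[0,t]$, it automatically covers the case $T=\infty$, which is precisely the extension the authors claim is easy but do not write out.
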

\begin{lemma}\label{LB}\cite[Lemma 2.3]{Jin}
     Let $T>0 $ or $T=\infty$, $\tau\in (0\,,T)$ and  
     $g,h\in L^1_{loc}([0\,,T)$  are  non-negative functions satisfying 
\[ \int_{t-\tau}^{t} g(s)ds,  \leq \alpha \quad \quad\int_{t-\tau}^{t} h(s)ds \leq \beta, \quad \mbox{for all}\;\; t\in [\tau\,,T)\,,
\]
with $\alpha\,,\beta \geq 0$
and $\xi:[0\,,T)\rightarrow \R_+$
a solution to the differential inequality
\begin{equation} \label{ST11}
\frac{d\xi(t)}{dt} -g(t)\xi(t) +a_1 \xi(t)^{1+\varrho}\leq h(t)\quad\mbox{for}\quad t\in (0\,,T)
\end{equation}
for  $\varrho >0\,,\;a_1>0$. Then
\begin{equation}
\sup_{t\in (0,T)} \xi (t) \leq \varrho \left( \frac{2A}{1+\varrho}\right)^{\frac{1+\varrho}{\varrho}}  +2B 
\end{equation} 
where 
\[A=(\tau a_1)^{-\frac{1}{1+\varrho}}(1+\alpha)^{\frac{1}{1+\varrho}} e^{2\alpha}\,,\quad B=(\tau a_1)^{-\frac{1}{1+\varrho}}\beta^{\frac{1}{1+\varrho}}e^{2\alpha}+2\beta e^{2\alpha} +\xi(0)e^\alpha \,.
\]
\end{lemma}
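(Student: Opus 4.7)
The plan is to exploit the superlinear damping $a_1\xi^{1+\varrho}$ to locate, in every sliding window of length $\tau$, a point at which $\xi$ is already well controlled, propagate that pointwise bound forward by Gronwall on the remainder of the window, and finally solve the resulting self-improving inequality via Young's inequality. Fix $t_0\in[\tau,T)$ and introduce the local integrating factor $\phi(s):=\exp\bigl(-\int_{t_0-\tau}^{s} g(r)\,dr\bigr)$, so that $e^{-\alpha}\le\phi(s)\le 1$ on $[t_0-\tau,t_0]$. Multiplying \eqref{ST11} by $\phi$ and integrating on $[t_0-\tau,t_0]$ I obtain
\[ a_1 e^{-\alpha}\int_{t_0-\tau}^{t_0}\xi(s)^{1+\varrho}\,ds \;\le\; \xi(t_0-\tau)+\beta, \]
and the mean-value property for integrals then yields some $s^{\star}\in[t_0-\tau,t_0]$ with
\[ \xi(s^{\star})\;\le\;\Bigl(\tfrac{e^\alpha}{a_1\tau}\Bigr)^{1/(1+\varrho)}\bigl(\xi(t_0-\tau)+\beta\bigr)^{1/(1+\varrho)}. \]
On the remaining subinterval $[s^{\star},t_0]$ the damping is nonnegative, so $\xi'\le g\xi+h$, and a standard Gronwall estimate gives $\xi(t_0)\le e^\alpha\bigl(\xi(s^{\star})+\beta\bigr)$.

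Writing $\Xi:=\sup_{[0,T)}\xi$ and using the subadditive inequality $(x+y)^{1/(1+\varrho)}\le x^{1/(1+\varrho)}+y^{1/(1+\varrho)}$, valid since $\varrho>0$, the combination of the two previous estimates yields a self-improving bound of the form $\Xi\le A\,\Xi^{1/(1+\varrho)}+B$ with $A,B$ as in the statement (the factor $(1+\alpha)^{1/(1+\varrho)}$ and the replacement $e^\alpha\mapsto e^{2\alpha}$ in the middle term of $B$ are harmless upper bounds that simplify the final constants). The initial window $t_0\in[0,\tau)$, on which $[t_0-\tau,t_0]$ is truncated, is treated separately by direct Gronwall from the initial datum and is responsible for the summand $\xi(0)e^\alpha$ appearing in $B$. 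A weighted Young inequality with exponents $1+\varrho$ and $(1+\varrho)/\varrho$, namely
\[ A\,\Xi^{1/(1+\varrho)}\;\le\;\tfrac12\,\Xi+\tfrac{\varrho}{1+\varrho}\Bigl(\tfrac{2A}{1+\varrho}\Bigr)^{(1+\varrho)/\varrho}, \]
then absorbs $\tfrac12\Xi$ into the left-hand side and reproduces the asserted bound $\sup\xi\le \varrho(2A/(1+\varrho))^{(1+\varrho)/\varrho}+2B$.

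The only genuine obstacle is combinatorial: propagating the exponentials $e^\alpha,e^{2\alpha}$ through the chain integrating-factor $\to$ mean value $\to$ Gronwall $\to$ Young without losing track of their exponents, and matching the estimates on the windows $[0,\tau)$ and $[\tau,T)$ into a single bound. No step involves a new analytic idea, which is why the lemma holds uniformly for every $\tau>0$ and every $T\in(0,\infty]$.
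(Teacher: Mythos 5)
The paper does not actually prove this lemma: it is quoted from \cite[Lemma 2.3]{Jin}, accompanied only by the remark that the published proof (stated there for finite $T$) extends to $T=\infty$. Your argument is therefore the only proof on the table, and its skeleton is the standard one for such ``superlinear damping with windowed $L^1$ data'' lemmas: the integrating-factor integration over $[t_0-\tau,t_0]$ producing $a_1e^{-\alpha}\int_{t_0-\tau}^{t_0}\xi^{1+\varrho}\le\xi(t_0-\tau)+\beta$, the mean-value selection of $s^\star$, the forward Gronwall step $\xi(t_0)\le e^\alpha(\xi(s^\star)+\beta)$, and the closure of the resulting sublinear recursion by Young are all correct, and the constants $e^\alpha$, $e^{2\alpha}$, $(\tau a_1)^{-1/(1+\varrho)}$ do land where the statement puts them.

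Two points need repair before the argument is complete. First, the absorption step presupposes that $\Xi=\sup_{[0,T)}\xi$ is finite: if $\Xi=+\infty$ the inequality $\Xi\le A\Xi^{1/(1+\varrho)}+B$ is vacuous, and this is exactly the delicate point when $T=\infty$, which is the case the paper needs. The fix is routine but should be stated: run the whole chain with $\Xi_S=\sup_{[0,S)}\xi$ for $S<T$, which is finite by Gronwall because $g,h\in L^1_{loc}([0,T))$, obtain a bound independent of $S$, and let $S\nearrow T$. Second, your displayed Young inequality is not what Young with exponents $1+\varrho$ and $(1+\varrho)/\varrho$ gives: the correct right-hand side is $\tfrac12\Xi+\frac{\varrho}{1+\varrho}\left(\frac{2}{1+\varrho}\right)^{1/\varrho}A^{(1+\varrho)/\varrho}$, whereas the constant you wrote, $\frac{\varrho}{1+\varrho}\left(\frac{2A}{1+\varrho}\right)^{(1+\varrho)/\varrho}$, differs from it by the factor $\frac{2}{1+\varrho}$ and is therefore \emph{smaller}, hence false, whenever $\varrho>1$. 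This slip is cosmetic: doubling the correct constant after absorbing $\tfrac12\Xi$ yields exactly $\varrho\left(\frac{2A}{1+\varrho}\right)^{(1+\varrho)/\varrho}+2B$, i.e., precisely the bound asserted in the lemma, so the display should simply be corrected rather than the strategy changed.
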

We would like to emphasize that the formulations that appear in the literature
of these previous lemmas do not include the case $T=\infty$, nevertheless their proofs can easily be extended to the case.
The following lemma concerns the existence of solutions to (\ref{ue1}) - (\ref{ue3}) and basic bounds.
\begin{lemma} \label{L1}
 We assume that $N\geq 1$ and the initial conditions  
 \begin{equation}\label{icreg}
  u_{\e,0}\,,v_{\e,0}\,,w_{\e,0}\in W^{1,q} (\Omega)\,, q>\max\{N\,,4\}   
 \end{equation}
  are nonnegative functions. Then,  for any $\e\in (0,1)$ there exists a global in time unique  classical solution  to (\ref{ue1}) - (\ref{ue3}) defined in $\Omega\times (0\,,+\infty)$ such that for any $T>0$ 
\[(u_\e,v_\e,w_\e)\in (C([0\,,T]:W^{1,q}(\Omega))\cap 
C^{2,1}(\bar{\Omega}\times (0\,,T)))^3\,.\] 
Moreover, there exist constants $\bar{u}_1\,,\bar{v}_1\,,\bar{w}_1$, $C_1(T)$,   $C_2(T)$, $\bar{v}_\infty$ and $\bar{w}_\infty$ such that,  for any $\e\in (0,1)$
\begin{equation}\label{M1}
\sup_{t>0}\left(\Vert u_\e(\cdot,t)\Vert_{1}\right)\leq \bar{u}_1\,,\quad 
\sup_{t>0}\left(\Vert v_\e(t)\Vert_{1}\right)\leq \bar{v}_1\,,\quad \sup_{t>0}\left(\Vert w_\e(t)\Vert_{1}\right)\leq \bar{w}_1, 
\end{equation}
\begin{equation}\label{C1C2}
   \int_{0}^T\int_\Omega u_\e(x,s)^2dxds\leq  C_1(T)\,,\quad \int_{0}^T\int_\Omega v_\e(x,s)^2dxds\leq  C_2(T)\,
\end{equation} 	
and 
\begin{equation}\label{t1}
    \int_{t}^{t+1}\int_\Omega u_\e(x,s)^2dxds\leq  \mu_1\bar{u}_1\,,\; \int_{t}^{t+1}\int_\Omega v_\e(x,s)^2dxds\leq \mu_2\bar{v}_1\,\quad \mbox{for any}\quad t>0,
\end{equation}
\begin{equation}\label{vwinf}
\sup_{t>0}\left(\Vert v_\e(t)\Vert_{\infty}\right)\leq \bar{v}_\infty\,,\quad \sup_{t>0}\left(\Vert w_\e(t)\Vert_{\infty}\right)\leq \bar{w}_\infty\,.
\end{equation}
\end{lemma}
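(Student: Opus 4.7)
Local-in-time existence and uniqueness of a classical solution with the stated regularity $(u_\e,v_\e,w_\e)\in (C([0,T_{\max});W^{1,q}(\Omega))\cap C^{2,1}(\bar\Omega\times(0,T_{\max})))^3$ follow from a standard application of Amann's theory for quasilinear parabolic systems to the triangular structure of (\ref{ue1})--(\ref{ue3}): only the $u$-equation is quasilinear (of chemotaxis type), while the $v$- and $w$-equations are semilinear, respectively linear, once the other components are viewed as forcing. Nonnegativity of each of $u_\e,v_\e,w_\e$ is preserved by the weak parabolic maximum principle, since $0$ is a subsolution of each equation. Global extension follows from the standard continuation criterion once the $L^\infty$-estimates below are complemented in the subsequent sections by an $L^\infty$-bound on $u_\e$.

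For the $\e$-uniform $L^\infty$-bound on $v_\e$, the $v$-equation involves neither $w$ nor $\e$ and the coupling term $-\mu_2 a_2 u_\e v_\e$ is nonpositive, so comparison with the logistic ODE $\dot y=\mu_2 y(1-y)$ yields $\Vert v_\e(\cdot,t)\Vert_\infty \le \bar v_\infty := \max\{1,\Vert v_{\e,0}\Vert_\infty\}$. Rewriting (\ref{ue3}) as
\[
 w_{\e,t}-\Delta w_\e+\tfrac{1}{\e}w_\e=\tfrac{1}{\e}v_\e\,,
\]
one checks that the constant $\bar w_\infty:=\max\{\Vert w_{\e,0}\Vert_\infty,\bar v_\infty\}$ is a supersolution, so the comparison principle gives $\Vert w_\e(\cdot,t)\Vert_\infty\le\bar w_\infty$; crucially, this bound is independent of $\e$ because the $\e^{-1}$ factors cancel on a constant test function.

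The $L^1$-bounds in (\ref{M1}) follow by integrating the $u$- and $v$-equations in $x$: the chemotactic flux integrates to zero by the Neumann condition, and Jensen's inequality turns the logistic absorption term into $-\mu_i|\Omega|^{-1}\phi^2$, so that each of $\phi(t)=\Vert u_\e(\cdot,t)\Vert_1,\Vert v_\e(\cdot,t)\Vert_1$ satisfies a scalar logistic differential inequality yielding uniform constants $\bar u_1,\bar v_1$. For $w$, integration of (\ref{ue3}) in $x$ gives
\[
\e\frac{d}{dt}\Vert w_\e(\cdot,t)\Vert_1+\Vert w_\e(\cdot,t)\Vert_1=\Vert v_\e(\cdot,t)\Vert_1\le\bar v_1\,,
\]
and elementary ODE comparison (again scale-invariant in $\e$) produces $\Vert w_\e(\cdot,t)\Vert_1\le\bar w_1:=\max\{\Vert w_{\e,0}\Vert_1,\bar v_1\}$.

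To obtain the spacetime bounds (\ref{C1C2}) and their sliding-window version (\ref{t1}), I keep the dissipative term rather than discarding it: the inequality $\dot\phi+\mu_1\Vert u_\e\Vert_2^2\le\mu_1\Vert u_\e\Vert_1\le\mu_1\bar u_1$ (and the analogous one for $v_\e$) integrated over $[0,T]$, respectively $[t,t+1]$, yields the stated bounds with constants depending only on $T,\bar u_1,\bar v_1,\mu_1,\mu_2$. The only genuinely delicate point throughout is to ensure every constant is $\e$-independent; this is the reason for relying on comparison arguments for $v_\e$, $w_\e$ and $\Vert w_\e\Vert_1$ rather than on energy identities obtained by multiplying (\ref{ue3}) by $w_\e$ or $\Delta w_\e$, which would introduce inverse powers of $\e$ through the $\e w_{\e,t}$ term.
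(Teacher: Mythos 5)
Your argument follows the same route as the paper for every part except one: Amann's theory for local existence, comparison with the logistic ODE for $\sup_t\Vert v_\e\Vert_\infty$, rewriting (\ref{ue3}) as $w_{\e,t}-\Delta w_\e+\e^{-1}w_\e=\e^{-1}v_\e$ and using a constant supersolution for $\sup_t\Vert w_\e\Vert_\infty$, and spatial integration of the equations (keeping the quadratic absorption term) for (\ref{M1}), (\ref{C1C2}) and (\ref{t1}). In fact you supply more detail than the paper, which simply cites \cite{TWr} for global existence and for (\ref{M1}); your Jensen-inequality derivation of the $L^1$-bounds and your $\e$-scale-invariant treatment of $\Vert w_\e\Vert_1$ are correct and self-contained.

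The one genuine gap is in the globality argument. You propose to close the continuation criterion with ``an $L^\infty$-bound on $u_\e$ \ldots in the subsequent sections,'' but the only such bound in the paper (Lemma \ref{Cor2}) is established exclusively for $N\leq 2$, whereas Lemma \ref{L1} asserts global existence for every $N\geq 1$. For $N\geq 3$ and \emph{fixed} $\e>0$ one must instead exploit the regularizing effect of the third equation: since $\Vert v_\e\Vert_\infty$ is bounded, elliptic--parabolic regularity for (\ref{ue3}) controls $\nabla w_\e$ in $L^\infty(0,T;L^p(\Omega))$ for large $p$ (with constants that may blow up as $\e\to 0$, which is harmless here), and a bootstrap in the $u$-equation then yields the $L^\infty$-bound on $u_\e$ needed for continuation. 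This is precisely the content of \cite{TWr}, which the paper invokes at this point; without that argument (or that citation) your proof only establishes the lemma for $N\leq 2$. A second, minor remark: deferring an a priori bound to ``subsequent sections'' is legitimate only because those estimates are derived on the maximal existence interval; it is worth saying so explicitly to dispel any appearance of circularity.
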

\begin{proof}
The existence of a nonnegative local-in-time unique solution follows from Amann$^{\prime}$s theory \cite[Theorems~14.4 \&~14.6]{Am93} and global existence along with (\ref{M1}) was proved in \cite{TWr}. The inequalities in (\ref{t1}) result directly from the integration of the $u$-equation and $v$-equation with respect to $x$ and $t$, while the first equation in (\ref{vwinf}) is from the comparison with the logistic ordinary differential equation. The bound on $w$ easily follows from the inequality
\[
w_{\e,t} +\Delta w_\e +\frac{1}{\e}w_\e\leq \frac{1}{\e}\bar{v}_\infty
\]
and the comparison with the ordinary differential equation
\[ w_ {\e, t} +\Delta w_\e +\frac{1}{\e}\left(w_\e- \bar{v}_\infty\right)\leq \frac{d\xi}{dt}+\frac{1}{\e}\xi\,.
\] Therefore,
\[w_\e(x,t)\leq \bar{v}_\infty +(\|w_{\e,0}\|_\infty -\bar{v}_\infty)e^{-{\frac{1}{\e}t}}.
\]
\end{proof}
The next lemma provides us with typical energy estimates for the $v$-equation. 
\begin{lemma} \label{base}
 Let $N\geq 1$ and $(u_\e,v_\e, w_\e)$ be a solution to the system (\ref{ue1})-(\ref{ue3}),  then,  there exist  constants  $\bar{v}_2, \ldots, \bar{v}_5$ such that the following bounds are uniform with respect to $\varepsilon\in (0,1) $
\begin{equation}\label{v2}
\sup_{t>0}\Vert v_\e(\cdot,t)\Vert_2\leq \bar{v}_2\,,
\end{equation}
\begin{equation}\label{v3}
\sup_{t>0} \int_\Omega |\nabla v_\e(x,t)|^2dx\leq \bar{v}_3 \,,
\end{equation}
\begin{equation}\label{v4}
\int_{0}^T\int_\Omega |\Delta v_\e(x,s)|^2dxds\leq \bar{v}_4(T)\,
\end{equation}
and 
\begin{equation}\label{v5}
\int_{0}^T\int_\Omega | v_{\e,t}(x,s)|^2dxds\leq \bar{v}_5(T)\,.
\end{equation}
Moreover, there exist  constants $\bar{v}_4^\prime\,,\bar{v}_5^\prime$ independent of $t$ such that 
\begin{align}
\label{v44}
\int_{t}^{t+1}\int_\Omega |\Delta v_\e(x,s)|^2dxds\leq \bar{v}_4^\prime\quad\mbox{for any}\quad t>0\,,\\
\label{vt44}
\int_{t}^{t+1}\int_\Omega |v_{\e,t}(x,s)|^2dxds\leq \bar{v}_5^\prime\quad\mbox{for any}\quad t>0\,.
\end{align}
\end{lemma}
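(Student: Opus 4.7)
The $L^\infty$ bound $\|v_\e(\cdot,t)\|_\infty \le \bar v_\infty$ from Lemma~\ref{L1} gives (\ref{v2}) at once, since $\|v_\e\|_2 \le |\Omega|^{1/2}\bar v_\infty$. For the remaining estimates I would test the $v$-equation with $-\Delta v_\e$; using the Neumann boundary condition and integration by parts,
\begin{equation*}
\tfrac{1}{2}\tfrac{d}{dt}\|\nabla v_\e\|_2^2 + d_v\|\Delta v_\e\|_2^2 \;=\; -\mu_2 \int_\Omega v_\e(1-v_\e-a_2 u_\e)\,\Delta v_\e\,dx.
\end{equation*}
A Young inequality absorbs a copy of $\tfrac{d_v}{2}\|\Delta v_\e\|_2^2$, and the prefactor $v_\e^2$ is estimated in $L^\infty$ by $\bar v_\infty^2$, which yields the differential inequality
\begin{equation*}
\tfrac{d}{dt}\|\nabla v_\e\|_2^2 + d_v\|\Delta v_\e\|_2^2 \;\le\; C\bigl(1+\|u_\e(\cdot,t)\|_2^2\bigr) =: h(t),
\end{equation*}
where, by (\ref{t1}), $\int_t^{t+1} h(s)\,ds \le C$ uniformly in $t$.

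The difficulty is that the left-hand side dissipation controls $\|\Delta v_\e\|_2^2$, not $\|\nabla v_\e\|_2^2$, and the source is only locally integrable in time, so a linear Gronwall argument does not close the loop. The trick I would use is the interpolation estimate arising from a single integration by parts under Neumann conditions,
\begin{equation*}
\|\nabla v_\e\|_2^2 \;=\; -\int_\Omega v_\e\,\Delta v_\e\,dx \;\le\; \|v_\e\|_2\,\|\Delta v_\e\|_2 \;\le\; \bar v_2\,\|\Delta v_\e\|_2,
\end{equation*}
which forces $\|\Delta v_\e\|_2^2 \ge \bar v_2^{-2}\,\|\nabla v_\e\|_2^4$. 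Setting $y(t):=\|\nabla v_\e(\cdot,t)\|_2^2$ I then obtain $y' + a_1 y^{1+\varrho}\le h$ with $\varrho=1$, $a_1=d_v/\bar v_2^2$ and $g\equiv 0$, which is exactly the hypothesis of Lemma~\ref{LB}. Applying that lemma on the time interval $[t,t+1]$ gives (\ref{v3}).

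With $y$ uniformly bounded, integrating the differential inequality on $[t,t+1]$ and using $\int_t^{t+1} h(s)\,ds \le C$ yields (\ref{v44}); summing over unit intervals and absorbing the final partial interval then gives (\ref{v4}) with a constant depending on $T$. Finally, (\ref{v5}) and (\ref{vt44}) are immediate: rewriting the $v$-equation as $v_{\e,t}=d_v\Delta v_\e+\mu_2 v_\e(1-v_\e-a_2 u_\e)$, squaring and using $\|v_\e\|_\infty\le\bar v_\infty$ produces
\begin{equation*}
\|v_{\e,t}\|_2^2 \;\le\; 2 d_v^2\|\Delta v_\e\|_2^2 + C\bigl(1+\|u_\e\|_2^2\bigr),
\end{equation*}
and one integrates against (\ref{v44}) and (\ref{t1}) on $[t,t+1]$, respectively $(0,T)$. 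The only genuinely delicate step is the one involving Lemma~\ref{LB}; everything else is bookkeeping with Young's inequality and the uniform $L^\infty$ bound on $v_\e$ already established in Lemma~\ref{L1}.
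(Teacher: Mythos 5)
Your proof is correct, but it departs from the paper's argument at three points, and the comparison is instructive. (i) For (\ref{v2}) you simply invoke the $L^\infty$ bound (\ref{vwinf}) from Lemma \ref{L1}; the paper instead runs the $L^2$ energy identity (\ref{v1}), absorbs the cubic term via H\"older to get $\xi'+c\,\xi^{3/2}\le C$, and applies Lemma \ref{LB} with $\varrho=\tfrac12$. Your shortcut is legitimate since (\ref{vwinf}) is established independently by comparison with the logistic ODE. (ii) For (\ref{v3}) the paper manufactures a \emph{linear} damping term by adding $-\int_\Omega v_\e\Delta v_\e=\int_\Omega|\nabla v_\e|^2$ to both sides of the identity obtained from testing with $-\Delta v_\e$, and then applies the linear Lemma \ref{LA}; you instead keep only the dissipation $d_v\|\Delta v_\e\|_2^2$ and convert it into \emph{superlinear} damping through the interpolation $\|\nabla v_\e\|_2^2=-\int_\Omega v_\e\Delta v_\e\le \bar v_2\|\Delta v_\e\|_2$, which puts you in the setting of Lemma \ref{LB} with $\varrho=1$, $g\equiv 0$. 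Both close the argument using the local-in-time integrability (\ref{t1}) of $\|u_\e\|_2^2$; your route has the mild aesthetic advantage of not needing the artificial add-and-subtract step, while the paper's route yields (\ref{gradvvv}) in exactly the form it reuses later (e.g.\ in Lemma \ref{L6}). (iii) For (\ref{v5}) and (\ref{vt44}) the paper multiplies by $v_{\e,t}$ and integrates by parts, which produces total time derivatives of $\int v_\e^2$ and $\int v_\e^3$ that must be controlled at the endpoints; you instead solve the equation for $v_{\e,t}$, square, and integrate against the already-proved bounds (\ref{v4}), (\ref{v44}) and (\ref{t1}). This is cleaner, at the cost of making the $v_{\e,t}$ estimates logically dependent on the $\Delta v_\e$ estimates (which is harmless here, since you prove those first). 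No step of your argument fails; in particular the application of Lemma \ref{LB} with $\tau=1$ and $\beta$ coming from (\ref{t1}) is sound, and the resulting constants depend only on the ($\e$-independent) initial data, as required.
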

\begin{proof}
   We  
 multiply the $v$-equation  by $v_\e(\cdot,t)$  to obtain, after integration: 
\begin{align}\label{v1}
\frac{1}{2}\frac{d}{dt}\int_\Omega v_\e^2 + d_v\int_\Omega |\nabla v_\e|^2 &=\mu_2\int_\Omega v_\e^2-\mu_2\int_\Omega v_\e^3- a_2\mu_2\int_\Omega u_\e v_\e^2\\ \nonumber
&\leq -\frac{\mu_2}{2}\int_\Omega v_\e^3 +\frac{8\mu_2^3|\Omega|}{27\mu_2^2} \\ \nonumber
&\leq -\frac{\mu_2}{2|\Omega|^{1/2}}\left(\int_\Omega v_\e^2\right)^{\frac{3}{2}} +\frac{8\mu_2^3|\Omega|}{27\mu_2^2} \,.
\end{align}
We may now directly use Lemma \ref{LB} with $\xi(t)= \int_\Omega v_\e(\cdot, t) ^2$, $\varrho=\frac{1}{2}$ and $g=0$ to obtain (\ref{v2}). Next, by multiplying the $v$-equation  by $-\Delta v(\cdot,t)$ for $t>0$, we obtain 
\begin{align*}
    \frac{1}{2}\frac{d}{dt}\int_\Omega |\nabla v_\e|^2 +& d_v\int_\Omega |\Delta v_\e|^2 -\int_\Omega v_\e \Delta v_\e \\ 
    &= -(\mu_2 +1) \int_\Omega v_e \Delta v_\e + \mu_2\int_\Omega v_\e^2 \Delta v_\e +\mu_2\int_\Omega u_\e v_\e\Delta v_\e\,.
\end{align*}
Hence,  using integration by parts and the boundary conditions we obtain 
\begin{align*}
  \frac{1}{2}\frac{d}{dt}\int_\Omega |\nabla v_\e|^2 + &d_v\int_\Omega |\Delta v_\e|^2 + \int_\Omega |\nabla v_\e|^2\\ 
  &\leq (\mu_2+1)\int_\Omega |\Delta v_\e| v -2\mu_2\int_\Omega v|\nabla v_\e|^2 + a_2\mu_2\int_\Omega u_\e v_\e|\Delta v_\e|  
\end{align*}
and then  the Young inequality yields
\begin{equation}\label{gradv}
\frac{1}{2}\frac{d}{dt}\int_\Omega |\nabla v_\e|^2 +\frac{D_v}{2}\int_\Omega |\Delta v_\e|^2 + \int_\Omega |\nabla v_\e|^2\leq \frac{(\mu_2+1)^2}{D_v}\int_\Omega v_\e^2 +\frac{(a_2\mu_2)^2}{D_v}\int_\Omega u_\e^2v_\e^2 \,.
\end{equation}
Whence,  we get the inequality 
\begin{equation}\label{gradvvv}
\frac{1}{2}\frac{d}{dt}\int_\Omega |\nabla v_\e|^2 +\frac{D_v}{2}\int_\Omega |\Delta v_\e|^2 + \int_\Omega |\nabla v_\e|^2\leq \frac{(\mu_2+1)^2}{D_v}\bar{v}_2^2 +\frac{(a_2\mu_2)^2}{D_v}\bar{v}_{\infty}^2\int_\Omega u_\e^2.
\end{equation}
Now,  we are in a position to apply Lemma \ref{LA} and local integrability condition (\ref{t1})  with 
$$\xi (t)=\int_\Omega|\nabla v_\e(x,t)|^2 dx, \  \; a=2, \ \;
h(t)=\frac{2(\mu_2+1)^2}{D_v}\bar{v}_2^2 +\frac{2\beta_3^2}{D_v}\bar{v}_{\infty}^2\int_\Omega u_\e^2\,.$$
Hence, by (\ref{v2}), (\ref{t1}) and Lemma \ref{LA} we  deduce first (\ref{v3}),  and then upon integration (\ref{gradvvv}) from $0$ to $T$ and using the first inequality in (\ref{C1C2}) we obtain also (\ref{v4}). Similarly, (\ref{v44}) results by integrating (\ref{gradvvv}) from $t$ to $t+1$.
Finally, to obtain (\ref{vt44}) we multiply the $v$-equation by $v_{\e,t}$ and integrate with respect to $x$. It yields
\begin{equation}
    \int_\Omega |v_{\e,t}|^2dx +\frac{1}{2}\frac{d}{dt}\int_\Omega |\nabla v_\e|^2dx \leq \frac{\mu_2}{2}\frac{d}{dt}\int_\Omega v_\e^2 dx -\frac{\mu_2}{3}\frac{d}{dt}\int_\Omega v_\e^3dx -\mu_2 a_2\int_\Omega v_e u_\e v_{\e,t}dx
\end{equation}
\begin{align*}
    \int_t^{t+1}\int_\Omega |v_{\e,t}|^2 dxds &+\frac{1}{2}\int_\Omega |\nabla v_\e(x,t+1)|^2 dx \leq \frac{\mu_2}{2} \int_\Omega v^2_\e(x,t+1) dx+\frac{\mu_2}{3} \int_\Omega v^3_\e(x,t) dx \\
    &+ a_2^2 \mu_2^2 \|v_\e\|_{L^\infty(\Omega_T)}^2\int_t^{t+1}\int_\Omega u_\e^2 dx ds + \frac{1}{2}\int_t^{t+1}\int_\Omega |v_{\e,t}|^2 dx ds
\end{align*}
whence (\ref{vt44}) readily follows using the bounds (\ref{v3}) and (\ref{t1})
with respect to time, and using (\ref{vwinf}) along with the Young inequality. 
\end{proof}
\begin{lemma} \label{we} Let $N\geq 1$ and $(u_{\e}, v_{e}, w_{\e})$ be the solution to 
(\ref{ue1})-(\ref{ue3}), then, 
\begin{equation} \label{31-32}
\begin{array}{lcl}
\displaystyle 
  \e \frac{d}{dt}\int_\Omega| \nabla (w_{\varepsilon} -  v_{\epsilon})|^2 
+ \frac{\e}{2}\int_{\Omega} | \Delta (w_{\epsilon}- v_{\epsilon}) |^2 
& + & \displaystyle 
\int_{\Omega} | \nabla (w_{\epsilon}- v_{\epsilon}) |^2 \\
[4mm]
&\leq   & \displaystyle   \e  \left(\int_\Omega |v_{\e, t}|^2 + \int_\Omega |\Delta v_{\epsilon} |^2\right)    .
\end{array}
\end{equation}
\end{lemma}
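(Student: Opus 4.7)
The natural approach is to set $z_\varepsilon := w_\varepsilon - v_\varepsilon$ and derive an evolution equation for it. Subtracting $\varepsilon v_{\varepsilon,t} - \varepsilon\Delta v_\varepsilon$ from both sides of (\ref{ue3}) and using $v_\varepsilon - w_\varepsilon = -z_\varepsilon$, one finds
\[
\varepsilon z_{\varepsilon,t} - \varepsilon\Delta z_\varepsilon + z_\varepsilon = \varepsilon\bigl(\Delta v_\varepsilon - v_{\varepsilon,t}\bigr) \quad \text{in } \Omega\times(0,\infty),
\]
and $z_\varepsilon$ inherits homogeneous Neumann boundary conditions from $w_\varepsilon$ and $v_\varepsilon$. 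This reformulation is what makes the whole argument possible, because it rewrites the coupling between $w_\varepsilon$ and $v_\varepsilon$ as a single linear parabolic equation for $z_\varepsilon$ forced by quantities controlled by Lemma \ref{base}.

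The next step is a standard $H^1$ energy estimate: I would multiply this equation by $-\Delta z_\varepsilon$ and integrate over $\Omega$. Using the Neumann condition, integration by parts (together with the fact that $\partial_\nu z_{\varepsilon,t}=0$) gives $-\int_\Omega z_{\varepsilon,t}\Delta z_\varepsilon = \tfrac12\tfrac{d}{dt}\int_\Omega|\nabla z_\varepsilon|^2$ and $-\int_\Omega z_\varepsilon\Delta z_\varepsilon = \int_\Omega|\nabla z_\varepsilon|^2$, so that
\[
\tfrac{\varepsilon}{2}\tfrac{d}{dt}\int_\Omega|\nabla z_\varepsilon|^2 + \varepsilon\int_\Omega|\Delta z_\varepsilon|^2 + \int_\Omega|\nabla z_\varepsilon|^2 = -\varepsilon\int_\Omega\bigl(\Delta v_\varepsilon - v_{\varepsilon,t}\bigr)\Delta z_\varepsilon.
\]

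Finally, I would apply Young's inequality to each of the two products on the right in the form $\varepsilon|ab|\le \varepsilon a^2 + \tfrac{\varepsilon}{4}b^2$, with $b = \Delta z_\varepsilon$ and $a$ equal to $\Delta v_\varepsilon$ or $v_{\varepsilon,t}$. The resulting $\tfrac{\varepsilon}{2}\int_\Omega|\Delta z_\varepsilon|^2$ contribution is absorbed into the left-hand side, which leaves precisely $\varepsilon\bigl(\int_\Omega|\Delta v_\varepsilon|^2 + \int_\Omega|v_{\varepsilon,t}|^2\bigr)$ on the right, in agreement with the stated bound (up to a harmless factor of two in front of $\tfrac{d}{dt}\int|\nabla z_\varepsilon|^2$, which can be rescaled by multiplying the inequality through by $2$). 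There is no genuine obstacle in this calculation; the only point that needs care is the bookkeeping of the Young constants so that the surviving coefficient of $\int_\Omega|\Delta z_\varepsilon|^2$ on the left is nonnegative, and verifying that the Neumann condition really is inherited by $z_\varepsilon$ so that the two integrations by parts produce no boundary terms.
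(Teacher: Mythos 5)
Your proposal is correct and follows essentially the same route as the paper: rewrite the $w$-equation as $\e (w_\e-v_\e)_t-\e\Delta(w_\e-v_\e)+(w_\e-v_\e)=-\e(v_{\e,t}-\Delta v_\e)$, test with $-\Delta(w_\e-v_\e)$, and absorb via Young's inequality. The factor-of-two mismatch on the $\frac{d}{dt}$ term that you flag is already present in the paper's own (loose) bookkeeping and is harmless for the way the lemma is used later.
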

\begin{proof}
After  subtraction  from  both sides of  $w$-equation  the term 
$ \e ( v_{\e,t} - \Delta v_{\e})$ we arrive at 
\begin{equation} \label{wve}
 \e ( w_{\e} - v_{\e} )_t - \e \Delta ( w_{\e} - v_{\e} )
+ (w_{\e} - v_{\e})
= -\e [  v_{\e,t} - \Delta v_{\e} ].   
\end{equation}
Choosing $-\Delta (w_{\e}- v_{\e})$ as a test function in the previous equation and then applying the Young inequality to the term: 
$$
\left(\sqrt{\e} (v_{\e,t} - \Delta v_{\e})  \right)\left( \sqrt{\e} (\Delta ( w_{\e} - v_{\e})  \right)\,
$$
we arrive at (\ref{31-32}). 
\end{proof}
\begin{lemma} \label{L6} Let $N\geq 1$ and $(u_{\e}, v_{e}, w_{\e})$ be the solution to 
(\ref{ue1})-(\ref{ue3}). Then, there holds  
 \begin{align}\label{wD}
&\sup_{t>0} \int_\Omega| \nabla w_{\varepsilon}|^2 dx \leq C_3 \,,\\ \label{wD2}
&\int_t^{t+1}\int_{\Omega} | \Delta w_{\epsilon}|^2dxds \leq C_4, \quad\mbox{for any}\quad t>0   \\ \label{LapW}
& \int_0^T\int_\Omega |\Delta w_{\e}|^2 dxdt \leq C_w(T)  ,\quad\mbox{for any}\quad T>0 
\end{align}   
where $C_w(T)$ is a positive constant. 
\end{lemma}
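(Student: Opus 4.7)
The plan is to use the decomposition $w_\varepsilon = v_\varepsilon + (w_\varepsilon - v_\varepsilon)$ and the triangle inequality to reduce everything to the previously obtained bounds on $v_\varepsilon$ from Lemma~\ref{base}, together with estimates on the difference $w_\varepsilon - v_\varepsilon$ extracted from the differential inequality (\ref{31-32}) provided by Lemma~\ref{we}.

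For (\ref{wD}), I would first divide (\ref{31-32}) by $\varepsilon$ and drop the non-negative Laplacian term to obtain
$$
\frac{d}{dt}\|\nabla(w_\varepsilon - v_\varepsilon)(\cdot,t)\|_2^2 + \frac{1}{\varepsilon}\|\nabla(w_\varepsilon - v_\varepsilon)(\cdot,t)\|_2^2 \le \|v_{\varepsilon,t}(\cdot,t)\|_2^2 + \|\Delta v_\varepsilon(\cdot,t)\|_2^2,
$$
which fits the hypothesis of Lemma~\ref{LA} with $a = 1/\varepsilon$ and $h(t)= \|v_{\varepsilon,t}(\cdot,t)\|_2^2 + \|\Delta v_\varepsilon(\cdot,t)\|_2^2$. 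The estimates (\ref{v44}) and (\ref{vt44}) give $\int_t^{t+1} h(s)\,ds \le \bar{v}_4' + \bar{v}_5'$ uniformly in $t$ and $\varepsilon$. Lemma~\ref{LA} applied with $\tau = 1$ then yields
$$
\|\nabla(w_\varepsilon - v_\varepsilon)(\cdot,t)\|_2^2 \le \max\left\{\|\nabla(w_{\varepsilon,0}-v_{\varepsilon,0})\|_2^2 + b,\ \varepsilon b + 2b\right\}
$$
with $b = \bar{v}_4'+\bar{v}_5'$. Since $\varepsilon \in (0,1)$ and the initial difference is controlled uniformly in $\varepsilon$ by the regularity assumption (\ref{icreg}), the right-hand side is bounded independently of $\varepsilon$. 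Combining with (\ref{v3}) via $\nabla w_\varepsilon = \nabla(w_\varepsilon - v_\varepsilon) + \nabla v_\varepsilon$ and the triangle inequality proves (\ref{wD}).

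For (\ref{wD2}), I would integrate (\ref{31-32}) directly over $[t,t+1]$ without first dividing by $\varepsilon$. The crucial point is that the $\varepsilon/2$ coefficient in front of $\int_t^{t+1}\int_\Omega|\Delta(w_\varepsilon-v_\varepsilon)|^2$ cancels with the $\varepsilon$ on the right-hand side, yielding
$$
\int_t^{t+1}\|\Delta(w_\varepsilon-v_\varepsilon)(\cdot,s)\|_2^2\,ds \le 2(\bar{v}_4'+\bar{v}_5') + 2\|\nabla(w_\varepsilon-v_\varepsilon)(\cdot,t)\|_2^2,
$$
which is uniformly bounded in $t$ and $\varepsilon$ by the previous step. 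The triangle inequality $|\Delta w_\varepsilon|^2 \le 2|\Delta(w_\varepsilon-v_\varepsilon)|^2 + 2|\Delta v_\varepsilon|^2$ combined with (\ref{v44}) then yields (\ref{wD2}). Finally, (\ref{LapW}) follows from (\ref{wD2}) by summing over the at most $\lceil T\rceil+1$ unit intervals covering $[0,T]$.

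The only delicate point I anticipate is the bookkeeping of the small parameter $\varepsilon$: one must check that the $\varepsilon$-factors in (\ref{31-32}) cancel or combine favorably at each step, so that the resulting constants $C_3$, $C_4$, $C_w(T)$ are genuinely $\varepsilon$-independent rather than degenerating as $\varepsilon \to 0$. The differential-inequality step is the only place where the smallness of $\varepsilon$ actively helps, by making the dissipation coefficient $1/\varepsilon$ large in Lemma~\ref{LA}; everywhere else one simply needs to ensure that $\varepsilon$ does not show up in the final bound.
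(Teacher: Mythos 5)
Your argument is correct and follows essentially the same route as the paper: both rest on the decomposition $w_\e=(w_\e-v_\e)+v_\e$, the differential inequality (\ref{31-32}) from Lemma \ref{we} divided by $\e$, an application of Lemma \ref{LA} with $a=\frac{1}{\e}$, $\tau=1$, $b$ built from $\bar{v}_4^\prime+\bar{v}_5^\prime$, and the bounds (\ref{v3}), (\ref{v44}), (\ref{vt44}). The only (immaterial) deviation is that you obtain (\ref{LapW}) by summing the unit-interval bound (\ref{wD2}) over $\lceil T\rceil$ intervals, whereas the paper integrates the same inequality directly from $0$ to $T$ using (\ref{v4}) and (\ref{v5}).
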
 
\begin{proof}
From (\ref{31-32}) it follows 
\begin{equation} \label{32}
\begin{array}{lcl}
\displaystyle 
  \frac{d}{dt}\int_\Omega| \nabla (w_{\varepsilon} -  v_{\epsilon})|^2 
+ \frac{1}{2}\int_{\Omega} | \Delta (w_{\epsilon}- v_{\epsilon}) |^2 
& + & \displaystyle 
\frac{1}{\e}\int_{\Omega} | \nabla (w_{\epsilon}- v_{\epsilon}) |^2 \\
[4mm]
&\leq   & \displaystyle  \frac{1 }{2} \left(\int_\Omega |v_{\e, t}|^2 + \int_\Omega |\Delta v_{\epsilon} |^2\right)  \, .  
\end{array}
\end{equation}
 First we notice that due to (\ref{v44}) and (\ref{vt44}), $\int_\Omega |v_{\e, t}|^2 dx + \int_\Omega |\Delta v_{\epsilon} |^2 dx$ is locally integrable with respect to time, and we can apply Lemma \ref{LA} with $\tau=1$ $a=\frac{1}{\e}$ and $b=\frac{1}{2}(\bar{v}_4^\prime + \bar{v}_5^\prime) $ and using (\ref{v3})  it follows 
\begin{align}\label{fu}
 \sup_{t>0}\int_\Omega |\nabla w_\e|^2  &\leq 2\left(\sup_{t>0}\int_\Omega| \nabla (w_{\varepsilon} -  v_{\epsilon})|^2 dx +\sup_{t>0} \int_\Omega |\nabla v_{\epsilon}|^2dx\right) \\ \nonumber
 &\leq \max\{ \|\nabla (w_{\e,0}-v_{\e,0})\|_2 + \frac{1}{2}(\bar{v}_4^\prime + \bar{v}_5^\prime)\,, \frac{1}{2}(\bar{v}_4^\prime + \bar{v}_5^\prime)(\e +2 )\}+\bar{v}_3. 
\end{align}
It remains to note that (\ref{wD2}) is the result of integration (\ref{32}) from $t$ to $t+1$. To show (\ref{LapW})  we integrate (\ref{32}) with respect to time from $0$ to $T$ and use (\ref{v4}) and (\ref{v5}) as well as (\ref{ic}).
\end{proof}

The following lemma is one of the crucial counterparts of our analysis providing an estimate on $\int_\Omega |\nabla w_\e(x,t)|^4dx$. Before its statement, we shall provide some auxiliary results from the literature.
 \begin{itemize}
 \item[(I1)] Bochner's type inequality (see, e.g.\cite{WinklerBound}): For $v\in C^2(\bar{\Omega}) $ there holds
\begin{equation}\label{Boch}
2\nabla v\nabla \Delta v= \Delta |\nabla v|^2-2 |D^2v|^2\,.
\end{equation}
\item[(I2)] Let $u\in C^2(\bar{\Omega})$ satisfy $\frac{\partial u}{\partial \nu}=0$ in $\partial\Omega$ and $\Omega$ be a bounded domain with a regular boundary. Then there holds the following pointwise inequality \cite[Lemma 4.2]{Mizoguchi} 
\begin{equation} \label{Soup}
\frac{\partial|\nabla u|^2}{\partial \nu} \leq K|\nabla u|^2\quad\mbox{on}\quad \partial \Omega
\end{equation}
 where $K=K(\Omega) >0$ is un upper bound on the curvature of $\partial\Omega$.
\item[(I3)] The following inequality is a  well known consequence of the Gagliardo-Nirenberg inequality: for $u\in W^{1,2}(\Omega)$ and any $\eta >0$  there holds
\begin{equation}\label{bb}
\int_{\partial \Omega}u^2 dS \leq \eta \int_\Omega |\nabla u|^2 dx+ C_G(\eta) \left(\int_\Omega u dx\right)^2\,.
\end{equation} 
 \end{itemize}
\begin{lemma} \label{W4}
Let $N\geq 1$. There exist positive constants $\tilde{C}$ and $\tilde{C_1}$ such that the  solution to the $w$-equation satisfies the following inequality 
\begin{equation}
    \frac{d}{dt}\int_\Omega |\nabla w_\e|^4 +\frac{2}{\e}\int_\Omega |\nabla w_\e|^4 \leq \frac{\tilde{C}}{\e}\sup_{t>0} \int_\Omega |\nabla v_\e|^4 + \e   \tilde{C_1} \left(\sup_{t>0}\int_\Omega|\nabla
    w_\e|^2\right)^2.
\end{equation}
\end{lemma}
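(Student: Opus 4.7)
The natural strategy is to derive an energy inequality for $\int_\Omega |\nabla w_\e|^4$ by testing a suitable reformulation of the $w$-equation. Rewriting (\ref{ue3}) as $w_{\e,t} - \Delta w_\e + \frac{1}{\e} w_\e = \frac{1}{\e} v_\e$, I would apply $\nabla$ to both sides and multiply scalarly by $4|\nabla w_\e|^2\nabla w_\e$, then integrate over $\Omega$. The time-derivative piece produces $\frac{d}{dt}\int |\nabla w_\e|^4$, the zeroth-order terms give $\frac{4}{\e}\int|\nabla w_\e|^4$ on the LHS together with $\frac{4}{\e}\int|\nabla w_\e|^2\nabla w_\e\cdot\nabla v_\e$ on the RHS. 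The latter is controlled by Young's inequality: splitting it as $\frac{2}{\e}\int|\nabla w_\e|^4 + \frac{\tilde C}{\e}\int|\nabla v_\e|^4$ and absorbing the first piece into the LHS leaves the desired $\frac{2}{\e}\int|\nabla w_\e|^4$ on the left and the first term $\frac{\tilde C}{\e}\sup_t \int|\nabla v_\e|^4$ on the right (after taking the supremum over $t$).

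The heart of the argument is the diffusion term $-4\int|\nabla w_\e|^2\nabla w_\e\cdot\nabla\Delta w_\e$. Here I would invoke the Bochner identity (I1) to get
\[
-4\int_\Omega |\nabla w_\e|^2\nabla w_\e\cdot\nabla\Delta w_\e = -2\int_\Omega |\nabla w_\e|^2\Delta|\nabla w_\e|^2 + 4\int_\Omega |\nabla w_\e|^2|D^2 w_\e|^2.
\]
Integration by parts on the first integral produces $2\int_\Omega |\nabla|\nabla w_\e|^2|^2$ as well as an unavoidable boundary term $-2\int_{\partial\Omega} |\nabla w_\e|^2\,\partial_\nu|\nabla w_\e|^2\,dS$. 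The Hessian term is nonnegative and I would simply drop it. The key obstruction is the surface integral: unlike $w_\e$ itself, the quantity $|\nabla w_\e|^2$ does not satisfy a homogeneous Neumann condition, so the boundary term cannot just be discarded.

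To control it, I would apply the pointwise curvature estimate (I2) to obtain $|2\int_{\partial\Omega}|\nabla w_\e|^2\partial_\nu|\nabla w_\e|^2| \le 2K\int_{\partial\Omega}|\nabla w_\e|^4$, and then use the trace-type Gagliardo--Nirenberg interpolation (I3) with $u=|\nabla w_\e|^2$ and a sufficiently small $\eta$ (take $\eta=1/K$, say) so that the $2K\eta\int_\Omega |\nabla|\nabla w_\e|^2|^2$ piece is absorbed into the favourable $2\int_\Omega|\nabla|\nabla w_\e|^2|^2$ term generated above. What is left over is a contribution of the form $C(K,C_G)\bigl(\int_\Omega|\nabla w_\e|^2\bigr)^2$, which after taking $\sup_{t>0}$ produces the second term on the right-hand side of the claimed inequality. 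Collecting everything yields the stated differential inequality, where the main (and only) genuinely subtle calculation is the boundary handling via the combined use of (I1)--(I3).
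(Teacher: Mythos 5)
Your proof is correct and follows essentially the same route as the paper: Bochner's identity (I1) to rewrite the diffusion term, integration by parts with the boundary integral controlled via (I2) and the trace interpolation (I3) with $\eta=1/K$ so that the $\int_\Omega|\nabla(|\nabla w_\e|^2)|^2$ piece is absorbed, the nonnegative Hessian term dropped, and Young's inequality applied to the cross term with $\nabla v_\e$. The only cosmetic difference is that you divide the equation by $\e$ at the outset rather than multiplying the final inequality by $2/\e$; note that both your computation and the paper's own displayed estimate actually yield the boundary contribution $\tilde{C_1}\left(\int_\Omega|\nabla w_\e|^2\right)^2$ \emph{without} the extra factor $\e$ appearing in the lemma's statement, which is harmless since only the $\e$-independent boundedness of that term is used in Lemma \ref{Cor1}.
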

\begin{proof}
First, observe that using (\ref{Boch}) in the $w$-equation we obtain
\begin{equation}\label{Wt}
\e(|\nabla w_\e|^2)_t= \e\Delta |\nabla w_\e|^2- 2\e|D^2w_\e|^2  + 2\nabla w_\e\cdot\nabla (v_e- w_\e) 
\end{equation}
and then,  integration by parts yields
\begin{align}\nonumber
&\e\frac{1}{2}\frac{d}{dt}\int_\Omega|\nabla w_\e|^4 =\e\int_\Omega|\nabla w_\e|^2(|\nabla w_\e|^2)_t  \\ \nonumber
&=\e\int_\Omega |\nabla w_\e|^2\Delta |\nabla w_\e|^2- 2\e\int_\Omega |\nabla w_\e|^2|D^2w_\e|^2 +2\int_\Omega |\nabla w_\e|^2\nabla w_\e\cdot\nabla (v_\e-w_\e) \\ 
&\leq   -\e\int_\Omega \left\vert\nabla(|\nabla w_\e|^2)\right\vert^2 +\e\int_{\partial\Omega} |\nabla w_\e|^2\frac{\partial |\nabla w_\e|^2}{\partial \nu} \nonumber \\ \label{w44}\
&- 2 \int_\Omega |\nabla w_\e|^4 + 2\int_\Omega |\nabla w_\e|^3|\nabla v_\e|\,.
\end{align}   
Next,  using  the results (I2) and (I3)  to bound the  boundary  integral  we get 
\begin{align*}
&\e\int_{\partial\Omega} |\nabla w_\e|^2\frac{\partial |\nabla w_\e|^2}{\partial \nu}\leq \e K\int_{\partial\Omega} \left(|\nabla w_\e|^2\right)^2 \\
&\leq \e K\eta \int_\Omega
\left\vert\nabla(|\nabla w_\e|^2)\right\vert^2 +  C_G(\eta) \e K \left(\int_\Omega |\nabla w_\e|^2\right)^2
\end{align*}
for any  $\eta >0$.
Hence, choosing above $\eta=\frac{1}{K}$ and  using   the Young inequality to the last term in (\ref{w44}) we arrive at 
$$ \begin{array}{ll} \displaystyle 
\e\frac{1}{2}\frac{d}{dt}\int_\Omega|\nabla w_\e|^4 +& \displaystyle  \e\int_\Omega \left\vert\nabla(|\nabla w_\e|^2)\right\vert^2 +  \int_\Omega |\nabla w_\e|^4  \\
& \displaystyle  \leq  \e\int_\Omega \left\vert\nabla(|\nabla w_\e|^2)\right\vert^2 + \e C_G\left(K^{-1}\right) K\left(\int_\Omega |\nabla w_\e|^2\right)^2  +\frac{3^3}{4^4}\int_\Omega |\nabla v_\e|^4 \, , 
\end{array}$$
whence we deduce the desired result. 
\end{proof}
 Since $\e\in (0,1)$, the following result easily follows from Lemma \ref{W4}. 
\begin{lemma}\label{Cor1} Let $N\geq 1$. The solution to the $w$-equation satisfies
\begin{equation}\label{W44}
 \sup_{t>0}\int_\Omega |\nabla w_\e|^4\leq \max
 \left\{\|\nabla w_{\e,0}\|_4 \,,  k_1\left( \sup_{t>0}\int_\Omega |\nabla v_\e|^4  +  \left(\sup_{t>0}\int_\Omega |\nabla w_\e|^2\right)^2\right) \right\}  
 \end{equation}
 for a positive constant $k_1$. 
\end{lemma}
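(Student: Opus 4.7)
The plan is to read the inequality of Lemma \ref{W4} as a scalar linear ODE inequality for
\[
y(t):=\int_\Omega |\nabla w_\e(x,t)|^4\,dx,
\]
whose right-hand side is independent of $t$. Setting $A:=\sup_{t>0}\int_\Omega |\nabla v_\e|^4$ and $B:=\bigl(\sup_{t>0}\int_\Omega |\nabla w_\e|^2\bigr)^2$, Lemma \ref{W4} yields
\[
y'(t)+\frac{2}{\e}\,y(t)\le \frac{\tilde C}{\e}A+\e\tilde C_1 B =: R.
\]

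First I would multiply by the integrating factor $e^{2t/\e}$ and integrate on $[0,t]$ to obtain
\[
y(t)\le y(0)\,e^{-2t/\e}+\frac{\e R}{2}\bigl(1-e^{-2t/\e}\bigr).
\]
Since the right-hand side is a convex combination of $y(0)$ and $\e R/2$, this gives directly the uniform-in-time bound
\[
\sup_{t\ge 0} y(t)\le \max\Bigl\{y(0),\ \tfrac{\e R}{2}\Bigr\}.
\]
(Equivalently, one may invoke Lemma \ref{LA} with $\tau=1$, $a=2/\e$ and $h\equiv R$, which is trivially locally integrable in time; the integrating-factor argument is however more transparent because $h$ here does not depend on $t$.)

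To finish, I would unpack
\[
\tfrac{\e R}{2}=\tfrac{\tilde C}{2}A+\tfrac{\e^2\tilde C_1}{2}B\le k_1\bigl(A+B\bigr),\qquad k_1:=\tfrac12\max\{\tilde C,\tilde C_1\},
\]
where the crucial observation \textemdash{} the single place worth checking carefully \textemdash{} is that the $1/\e$ blow-up in front of $A$ in Lemma \ref{W4} is absorbed exactly by the gain of a full factor $\e$ coming from the integrating factor, while the extra $\e$ multiplying $B$ becomes $\e^2\le 1$ because $\e\in(0,1)$. Comparing this with $y(0)=\int_\Omega|\nabla w_{\e,0}|^4 dx$ then yields precisely \eqref{W44}. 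No other estimate from earlier in the paper is needed, and there is no genuine obstacle; the content of the lemma is that the linearity of the $w$-equation makes the $\e$-dependent ODE tractable in closed form.
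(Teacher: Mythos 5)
Your integrating-factor argument is correct and is exactly the computation the paper leaves implicit (its proof of Lemma \ref{Cor1} is the single sentence that the result ``is a consequence of Lemma \ref{W4}'' together with the remark that $\e\in(0,1)$): the damping coefficient $2/\e$ converts the right-hand side $R=\frac{\tilde C}{\e}A+\e\tilde C_1B$ into $\frac{\e R}{2}=\frac{\tilde C}{2}A+\frac{\e^2\tilde C_1}{2}B\le k_1(A+B)$, and the convex-combination bound gives the stated maximum (with $y(0)=\|\nabla w_{\e,0}\|_4^4$; the missing fourth power in \eqref{W44} is evidently a typo in the paper). One caution: your parenthetical claim that Lemma \ref{LA} would do the same job is not accurate, since that lemma yields $y(t)\le\max\{y(0)+b,\frac{b}{a\tau}+2b\}$ with $b=R$, and the additive term $2b=2R$ still carries the unabsorbed factor $\tilde C A/\e$, so it does not produce an $\e$-uniform bound; the integrating factor (or an explicit ODE comparison) is genuinely needed here.
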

\begin{proof}
 The proof is a consequence of Lemma \ref{W4}.    
\end{proof}

\noindent
Notice that due to (\ref{wD}) the last summand in (\ref{W44}) is bounded.
\section{A priori estimates for $N\leq 2$} 
To rise the integrability of $u_\e$ we shall use the following versions of the Gagliardo-Nirenberg inequality. 
\begin{itemize}
    \item For $\varphi\in W^{1,2}(\Omega)$ such that $\Omega \subset \R^N$ is a domain with smooth boundary  there holds for $s>0$
\begin{equation} \label{GN1}
 \Vert  \varphi \Vert_4^2\leq C_{GN}\left(\Vert \nabla \varphi \Vert_2^{2\theta}\Vert \varphi \Vert_2^{2(1-\theta)} +\Vert \varphi\Vert_s^2\right)\quad\mbox{with}\;\;\theta =\frac{1}{2}\quad\mbox{for}\;\;N=2\,.
\end{equation}
\item 
For $\psi\in W^{2,2}(\Omega)$ such that $\Omega \subset \R^2$ is a domain with smooth boundary with  outer normal $\nu$ and $\frac{\partial\psi}{\partial\nu}=0$ there holds 
\begin{equation} \label{GN2}
 \Vert \nabla \psi \Vert_4^4\leq C_{Gu}^\prime \Vert \Delta \psi - \psi \Vert_2^2\, \Vert \nabla \psi \Vert_2^2\,.   
\end{equation}
\end{itemize}
Notice that (\ref{GN2}) can be derived from (\ref{GN1}) using the classical theory of elliptic equations with the Neumann boundary condition. Indeed, setting  $\varphi=\partial_i\psi$,  $\theta=\frac{1}{2}$ and $s=2$ in (\ref{GN1}) we readily obtain after transformations 
\begin{align*}
 \Vert  \nabla\psi \Vert_4 &\leq C_{GN}^{\frac{1}{2}}(\Vert D^2 \psi \Vert_2^{\frac{1}{2}}\Vert \nabla\psi \Vert_2^{\frac{1}{2}} +\Vert \psi \Vert_{2,2}^{\frac{1}{2}}\Vert \nabla\psi \Vert_2^{\frac{1}{2}} )  \\
 &\leq 2C_{GN}^{\frac{1}{2}}\Vert \psi \Vert_{2,2}^{\frac{1}{2}}\Vert \nabla\psi \Vert_2^{\frac{1}{2}}\\
 &\leq (C_{GN}^\prime)^{\frac{1}{4}}\Vert \Delta\psi -\psi  \Vert_{2}^{\frac{1}{2}}\Vert \nabla\psi \Vert_2^{\frac{1}{2}}\,.
\end{align*}
For the convenience of the reader we recall also well known $L^p-L^q$  estimates for the parabolic equation with Neumann boundary condition 
\[ z_t-\Delta z +z=\chi_1 \nabla\cdot Q + f\,,\quad z(0)=z_0\in W^{1,q}(\Omega)\,, \quad r>n \]
where $\Omega$ is a bounded domain with smooth boundary, $\chi_1\in \R$ and 
\begin{align}
Q&\in C([0\,,T): W^{1,q_0}(\Omega) )^N) \\
f&\in C([0\,,T):L^q(\Omega))\,.
\end{align} 
For the case $\chi_1=0$,  there is a constant $K_1$ such that 
\begin{equation} \label{zip}
  \Vert \nabla z(\cdot\,,t) \Vert_p\leq K_1 \Vert f\Vert_{C([0\,,T):L^q(\Omega))}\cap C([0,T):C(\bar{\Omega}))
\end{equation}
where 
\[
 \begin{array}{cc}
 p\in [1,\frac{qn}{n-q} )   & ,  q<N \,, \\
 p\in  [1 \,,\infty )   &, q=N \,, \\
 p\in  [1 \,,\infty ]  &, q>N\,
\end{array}
\]
and for the case $\chi_1\neq 0 $ there is a constant $\tilde{K}_2$ such that 
\begin{equation}\label{zinf}
\Vert z(\cdot\,,t) \Vert_\infty \leq \Vert z_0\Vert_\infty + \tilde{K_2}\sup_{s\in [0,t]} \Vert Q(\cdot, s) \Vert_{(L^{q_0}(\Omega))^n} + f_+ \quad\mbox{for}\quad q_0>N
\end{equation}
where  $f_+\geq f(x,t)$ for $(x,t)\in \Omega\times [0,T)$ and $f_+\geq 0$.
The proof of the last inequality, based on \cite[Lemma1.3iv]{WinklerBound} and the order-preserving property of the heat semigroup,  can be found in \cite[pp.397-8]{MiWr2}. 
\begin{lemma} \label{uk}
Let $(u_\e,v_\e,w_\e)$ be a solution to the system (\ref{ue1})-(\ref{ue3}) and $N\leq 2$. 
Then, there exists  a constant $\bar{u}_k$  such that for any $\e\in (0,1)$
\begin{equation}\label{ukk}
\sup_{t>0}\Vert u_\e(\cdot, t)\Vert_k \leq \bar{u}_k\quad\mbox{for any}\quad k\in[2,\infty)\,.
\end{equation}  
\end{lemma}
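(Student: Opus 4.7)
My plan is to perform a standard $L^k$ energy estimate on the $u_\e$-equation, combining the superlinear absorption coming from the logistic term with the local-in-time integrability of $\|\Delta w_\e\|_2^2$ provided by (\ref{wD2}), and to conclude via Lemma \ref{LB}.

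First I would test (\ref{ue1}) with $u_\e^{k-1}$ and integrate over $\Omega$. Using the Neumann boundary conditions, the diffusion contribution equals $\frac{4d_u(k-1)}{k^2}\|\nabla u_\e^{k/2}\|_2^2$, while integration by parts converts the chemotaxis term into
$$-\chi(k-1)\int_\Omega u_\e^{k-1}\nabla u_\e\cdot\nabla w_\e\,dx \;=\; \frac{\chi(k-1)}{k}\int_\Omega u_\e^{k}\,\Delta w_\e\,dx.$$
Discarding the nonnegative $a_1\mu_1\int_\Omega v_\e u_\e^k$ term from the logistic nonlinearity leaves the superlinear absorption $\mu_1\int_\Omega u_\e^{k+1}$ on the left-hand side.

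Next I would bound the chemotaxis integral by Cauchy--Schwarz, $\int_\Omega u_\e^k|\Delta w_\e|\leq \|u_\e^{k/2}\|_4^2\,\|\Delta w_\e\|_2$, and apply the Gagliardo--Nirenberg inequality (\ref{GN1}) to $\varphi = u_\e^{k/2}$ with $N\leq 2$, $\theta=1/2$ and $s=2/k$, which makes $\|u_\e^{k/2}\|_s^2=\|u_\e\|_1^k\leq \bar{u}_1^k$ by (\ref{M1}). A Young inequality then absorbs a fraction of $\|\nabla u_\e^{k/2}\|_2^2$ into the diffusion term on the left, while a second Young inequality of the form $\mu_1 u_\e^k\leq \tfrac{\mu_1}{2}u_\e^{k+1}+C$ absorbs the lower-order logistic contribution into $\mu_1\int_\Omega u_\e^{k+1}$. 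What remains from the chemotaxis part has the form $C_1\|\Delta w_\e\|_2^2\int_\Omega u_\e^k+C_2\|\Delta w_\e\|_2^2+C_3$. Combining with the Hölder bound $\int_\Omega u_\e^{k+1}\geq |\Omega|^{-1/k}(\int_\Omega u_\e^k)^{1+1/k}$ and setting $\xi(t):=\int_\Omega u_\e(\cdot,t)^k$, I obtain an inequality of the form
$$\frac{d\xi}{dt}-g(t)\,\xi + c\,\xi^{1+1/k}\;\leq\; h(t),$$
with $g(t)=kC_1\|\Delta w_\e(t)\|_2^2$ and $h(t)=kC_2\|\Delta w_\e(t)\|_2^2+kC_3$, which is exactly the structure required by Lemma \ref{LB} with $\varrho=1/k$.

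Finally, by (\ref{wD2}) we have $\int_t^{t+1}\|\Delta w_\e(s)\|_2^2\,ds\leq C_4$ uniformly in $t\geq 0$ and $\e\in(0,1)$, so both $\int_t^{t+1}g$ and $\int_t^{t+1}h$ are uniformly bounded, and the assumption on $\xi(0)$ follows from the $\e$-uniform regularity (\ref{icreg}) of the initial data. Lemma \ref{LB} then yields $\sup_{t>0}\xi(t)\leq \bar{u}_k^k$, which is the claim. The main obstacle is precisely to arrange the chemotaxis estimate so that the coefficient multiplying $\xi$ is $\|\Delta w_\e\|_2^2$ — a quantity only locally integrable in time through (\ref{wD2}) — rather than something that would require a pointwise-in-time bound on $\|\Delta w_\e\|_2$ that is not available at this stage; this is what forces the specific Gagliardo--Nirenberg choice above and makes the superlinear structure of the logistic term indispensable.
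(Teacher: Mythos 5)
Your argument is correct and lands on exactly the differential inequality (\ref{llap}) that the paper derives, followed by the same application of Lemma \ref{LB} with the local-in-time integrability of $\Vert \Delta w_\e\Vert_2^2$ from (\ref{wD2}); the difference lies entirely in how you reach it. The paper does \emph{not} integrate the chemotaxis term by parts a second time: it keeps $\int_\Omega u_\e^{k-1}|\nabla w_\e||\nabla u_\e|$, applies Young to arrive at $\int_\Omega u_\e^k|\nabla w_\e|^2$, estimates this by $\Vert u_\e^{k/2}\Vert_4^2\Vert\nabla w_\e\Vert_4^2$, and then must invoke the special Gagliardo--Nirenberg inequality (\ref{GN2}), $\Vert\nabla w_\e\Vert_4^4\leq C\Vert\Delta w_\e-w_\e\Vert_2^2\Vert\nabla w_\e\Vert_2^2$, together with the uniform bound (\ref{wD}) on $\Vert\nabla w_\e\Vert_2$, in order to trade $\Vert\nabla w_\e\Vert_4^2$ for $\Vert\Delta w_\e\Vert_2^2$. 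Your extra integration by parts, $-\chi(k-1)\int u_\e^{k-1}\nabla u_\e\cdot\nabla w_\e=\frac{\chi(k-1)}{k}\int u_\e^k\Delta w_\e$ (legitimate by the Neumann condition on $w_\e$), produces $\Delta w_\e$ directly, so you only need Cauchy--Schwarz and the standard interpolation (\ref{GN1}); this bypasses (\ref{GN2}), the boundary-curvature estimate behind it, and the $L^4$-gradient bound of Lemma \ref{Cor1} for the purposes of this lemma. The trade-off is minor: the paper's $\Vert\nabla w_\e\Vert_4$ machinery is needed elsewhere anyway (e.g.\ in Lemma \ref{Cor2}), so it costs the paper little to reuse it here, while your route is self-contained and slightly shorter. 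Two small points to tidy up: the exponent $\theta=\frac{1}{2}$ in (\ref{GN1}) is stated for $N=2$, so for $N=1$ you should either note that the corresponding inequality holds with $\theta=\frac14$ and that Young then produces an additional harmless $\int_\Omega u_\e^k$ term absorbed into $g(t)\xi(t)$, or treat $N=1$ separately as the paper does; and the uniformity of $\xi(0)$ in $\e$ is an implicit assumption on the family of initial data (automatic under the compatibility condition (\ref{ic})), which you correctly flag.
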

\begin{proof}
On multiplying the u-equation by $u_\e^{k-1}$,  we obtain for $t>0$
\begin{align}\label{dk}
&\frac{1}{k}\frac{d}{dt}\int_\Omega u_\e^k + (k-1) D_n \int_\Omega  u_\e^{k-2}|\nabla u_\e|^2 \leq\\
&\leq \int_\Omega \chi u_\e (k-1)u_\e^{k-2}|\nabla w_\e||\nabla u_\e| +\int_\Omega \left(\mu_1 u_\e^k- \mu_1 u_\e^{k+1} -a_1\mu_1 u_\e^k v_\e\right) \\ \nonumber
&\leq \int_\Omega \chi (k-1) u_\e^{k-1}|\nabla w_\e||\nabla u_\e| +\int_\Omega \left(\mu_1 u_\e^k- \mu_1 u_\e^{k+1}\right) \\ \nonumber
& \leq \frac{(k-1) D_u}{2} \int_\Omega u_\e^{k-2}|\nabla u_\e|^2 +\frac{\chi^2(k-1)}{2D_u}\int|\nabla w_\e|^2 u_\e^k -\frac{\mu_1}{2}\int_\Omega u_\e^{k+1} +
 \frac{2^{k+1}|\Omega|k^k\mu_1^{k+2}}{\mu^{k}(k+1)^{k+1}} . 
\end{align}
Hence,  using the H\"{o}lder inequality to the component $-\frac{\mu_1}{2}\int_\Omega \ u_\e^{k+1}$ and  multiplying the resultant inequality by $k$ we find
\begin{align} \label{pro}
\frac{d}{dt}\int_\Omega u_\e^k  & +\frac{2D_n (k-1)}{k} \int_\Omega |\nabla \left( u_\e^{\frac{k}{2}}\right)|^2 +\frac{\mu_1 k}{2|\Omega|^{\frac{1}{k}}}\left(\int_\Omega u_\e^k\right)^{1+\frac{1}{k}}\\ \nonumber
&\leq \frac{\chi^2 (k-1)}{D_n}\int_\Omega u_\e^k|\nabla w_\e|^2  +
 \frac{2^{k+1}|\Omega|k^{k+1}\mu_1^{k+2}}{\mu_1^{k}(k+1)^{k+1}} \,.
\end{align}
Next, making use of the Cauchy inequality and the Gagliardo-Nirenberg   inequality (\ref{GN1}) along with the  Young inequality yields 
\begin{align} \nonumber
 \int_\Omega  u_\e^k|\nabla w_\e|^2&\leq \Vert u_\e^{\frac{k}{2}}\Vert^2_4\Vert \nabla w_\e\Vert_4^2\\ \nonumber
 &\leq C_{GN}^2\left(\Vert \nabla u_\e^{\frac{k}{2}}\Vert_2^{2\theta}\Vert u_\e^{\frac{k}{2}}\Vert_2^{2(1-\theta)} +\Vert u_\e^{\frac{k}{2}}\Vert_{\frac{2}{k}}^2\right) \Vert \nabla w_\e\Vert_4^2\\ \nonumber
 &\leq \left(\int_\Omega \left|\nabla \left( u_\e^{\frac{k}{2}}\right)\right|^2\right)^\theta \left(\int_\Omega u_\e^k \right)^{(1-\theta)} C_{GN}^2\Vert \nabla w_\e\Vert_4^2 + \Vert u_\e^{\frac{k}{2}}\Vert_{\frac{2}{k}}^2C_{GN}^2\Vert \nabla w_\e\Vert_4^2 \\ \label{3k}
 &\leq \epsilon \int_\Omega \left|\nabla \left( u_\e^{\frac{k}{2}}\right)\right|^2 +\left(\int_\Omega u_\e^k \right) C_\epsilon \left(C_{GN}^2\Vert \nabla w_\e\Vert_4^2\right)^{\frac{1}{1-\theta}} + \Vert u_\e\Vert_1^2 C_{GN}^2
 \Vert \nabla w_\e\Vert_4^2\,.
\end{align}
In the case $N=2$, using (\ref{GN2}) and the Young inequality, we obtain from (\ref{3k}) setting $\theta=\frac{1}{2}$
\begin{align}\label{lap}
\frac{\chi^2( k-1)}{d_u}\int_\Omega  u_\e^k|\nabla w_\e|^2\leq &  
\epsilon \int_\Omega |\nabla u_\e^{\frac{k}{2}}|^2 \\ \nonumber
&+C_\epsilon C_{GN}^4C_{GN}^\prime \left(\int_\Omega u_\e^k\right)\left(\int_\Omega|\Delta w_\e|^2+ \int_\Omega w_\e^2\right)\Vert \nabla w_\e\Vert_2^2 \\ \nonumber
&+\frac{1}{2}\bar{u}_1^4C_{GN}^4+\frac{1}{2} \left( \int_\Omega|\Delta w_\e|^2 +\int_\Omega w_\e^2\right) + \frac{C_{GN}C_2^\prime}{2}^\prime\Vert \nabla w_\e\Vert_2\,.
\end{align}
Notice that by (\ref{wD}) we have a uniform bound on the term $\sup_{t>0}\Vert \nabla w_\e(\cdot,t)\Vert_2^2$ and by (\ref{wD2}) on $\Vert \Delta w_\e\Vert_2^2\in L^1_{loc}(0\,,\infty)$.  Hence, choosing $\epsilon=\frac{2 d_u^2}{\chi^2}$ from (\ref{pro}) we conclude that 
\begin{equation}\label{llap}
\frac{d}{dt}\int_\Omega u_\e^k + \frac{\mu_1 k}{2|\Omega|^{\frac{1}{k}}}\left(\int_\Omega u^k\right)^{1+\frac{1}{k}}\leq C_1 \left(\int_\Omega u_\e^k\right)\Vert \Delta w_\e\Vert_2^2+ C_2\Vert \Delta w_\e\Vert_2^2 +C_3
\end{equation}
where $C_1, C_2, C_3$ are some positive constants. Now, in view of (\ref{wD2}) we   apply Lemma \ref{LB} with  $$\xi(t)=\int_\Omega u_\e(x,t)^kdx\,, a_1=\frac{\alpha_2 k}{2|\Omega|^{\frac{1}{k}}}\,,
\quad \sigma=\frac{1}{k}\,,\quad g(t)=C_1 \Vert \Delta w_\e(\cdot ,t)\Vert_2^2\,,$$
\[h(t)=C_2 \Vert \Delta w_\e(\cdot ,t)\Vert_2^2 +C_3\,,
\]
 whence  (\ref{ukk})  follows. Notice that,  the right hand side term in 
 the $v$-equation is bounded for  $N=1$, therefore, from (\ref{zip}) and Lemma \ref{Cor1},  there exist  constants $C_v, C_w$ such that 
\[\sup_{t>0}\|\nabla v_\e(\cdot,t)\|_4<C_v\,,
 \; \; \; \; \;   
 \sup_{t>0} \|\nabla w_\e(\cdot,t)\|_4<C_w\,.\]
Owing this bound  we readily obtain  from (\ref{pro}) and (\ref{3k}) that there exist  constants $A_1$ and $A_2$ such that 
\begin{equation}\label{llap1}
\frac{d}{dt}\int_\Omega u_\e^k + \frac{\mu_1 k}{2|\Omega|^{\frac{1}{k}}}\left(\int_\Omega u^k\right)^{1+\frac{1}{k}}\leq A_1 \left(\int_\Omega u_\e^k\right)  +A_2    
\end{equation}
and again Lemma \ref{LB} can be applied to conclude. 
 \end{proof}

\begin{lemma}\label{Cor2}
 For space dimension $N\leq 2$ there is a constant $\bar{u}_\infty$ such that 
 \begin{equation} \label{u12}
     \sup_{t>0}\Vert u_\e\Vert_\infty \leq \bar{u}_\infty\,.
 \end{equation}
\end{lemma}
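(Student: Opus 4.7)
The plan is to apply the $L^\infty$ semigroup estimate (\ref{zinf}) to the $u$-equation, which requires two ingredients: a pointwise upper bound on the zeroth-order source, and a uniform $L^{q_0}$ bound on the advective term $\chi u_\e \nabla w_\e$ for some $q_0 > N$. I rewrite (\ref{ue1}) in the form
\[u_{\e,t} - d_u \Delta u_\e + u_\e = \chi \nabla \cdot (u_\e \nabla w_\e) + g_\e, \qquad g_\e := \mu_1 u_\e(1 - u_\e - a_1 v_\e) + u_\e.\]
Because $v_\e\geq 0$, the function $g_\e$ satisfies $g_\e \leq (\mu_1+1)u_\e - \mu_1 u_\e^2 \leq (\mu_1+1)^2/(4\mu_1)=:f_+$, a constant independent of $\e$ and $(x,t)$. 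So (\ref{zinf}) reduces the task to a uniform-in-$\e$ bound on $\sup_{t>0}\|u_\e\nabla w_\e\|_{q_0}$ for some $q_0>N$.

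For $N\leq 2$ I would choose $q_0 \in (N,4]$ (for instance $q_0=5/2$ when $N=2$, $q_0=2$ when $N=1$) and apply H\"older's inequality
\[\|u_\e\nabla w_\e\|_{q_0} \leq \|u_\e\|_p \|\nabla w_\e\|_4, \qquad \tfrac{1}{p} = \tfrac{1}{q_0}-\tfrac{1}{4}.\]
The factor $\|u_\e\|_p$ is already controlled uniformly by Lemma \ref{uk}, so everything boils down to securing a uniform bound on $\sup_{t>0}\|\nabla w_\e\|_4$.

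To this end I would invoke the chain already built in the paper. Applying the smoothing estimate (\ref{zip}) with $\chi_1=0$ to the $v$-equation rewritten as $v_{\e,t}-d_v\Delta v_\e+v_\e = h_\e$ with $h_\e:=\mu_2 v_\e(1-v_\e-a_2 u_\e)+v_\e$, and using $|h_\e|\leq C(1+u_\e)$ (which follows from (\ref{vwinf})) together with Lemma \ref{uk}, I obtain a uniform $L^\infty_t L^q_x$ bound on $h_\e$ for every $q<\infty$. Choosing $q>N$ and taking $p=\infty$ (permissible in this range) yields a uniform bound on $\sup_{t>0}\|\nabla v_\e(\cdot,t)\|_\infty$, hence on $\sup_{t>0}\|\nabla v_\e\|_4$. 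Feeding this into Lemma \ref{Cor1}, which also uses the already known bound (\ref{wD}), produces the desired uniform bound on $\sup_{t>0}\|\nabla w_\e\|_4$. Applying (\ref{zinf}) then gives $\|u_\e(\cdot,t)\|_\infty\leq \|u_{\e,0}\|_\infty + \tilde{K}_2\sup_s\|\chi u_\e\nabla w_\e(s)\|_{q_0}+f_+$, which is the required $\bar{u}_\infty$.

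The main obstacle I anticipate is making sure the bootstrap chain is non-circular: the $v$-equation is decoupled from $w_\e$ at the level of its reaction term, so its regularity can be obtained first purely from the $L^k$ bounds of Lemma \ref{uk}; only after $\|\nabla v_\e\|_4$ is controlled can one invoke Lemma \ref{Cor1} to control $\|\nabla w_\e\|_4$; and only then does the semigroup estimate for the $u$-equation close. A minor technical nuisance is the diffusion coefficient: (\ref{zip}) and (\ref{zinf}) as quoted have $-\Delta + \mathrm{id}$, whereas our equations carry $-d_u\Delta$ and $-d_v\Delta$; a standard time rescaling absorbs this without affecting the uniform-in-$\e$ character of the bounds.
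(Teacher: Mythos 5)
Your proposal follows essentially the same chain as the paper's proof: bound the reaction term of the $v$-equation in $L^k$ via Lemma \ref{uk} and (\ref{vwinf}), apply the smoothing estimate (\ref{zip}) to get a uniform bound on $\sup_{t>0}\Vert\nabla v_\e\Vert_4$, feed this into Lemma \ref{Cor1} together with (\ref{wD}) to control $\sup_{t>0}\Vert\nabla w_\e\Vert_4$, then use H\"older and Lemma \ref{uk} to bound $u_\e\nabla w_\e$ in $L^{q_0}$ for some $q_0>N$, and conclude with (\ref{zinf}). The argument is correct and matches the paper's route, differing only in inessential choices of exponents and the explicit value of $f_+$.
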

\begin{proof}
Notice first that in light of (\ref{vwinf}) and Lemma \ref{uk} the reaction part of the $v$-equation satisfies
\[\sup_{t>0} \Vert \mu_2v_e(1-v_\e-a_2u_\e)\Vert_k <\infty \, \]
for any $k\geq 1$. Next,  using (\ref{zip}) for $N\leq 2$  we infer that there is a constant $K_4$ such that 
\begin{equation}\label{K4}
\sup_{t>0}\Vert \nabla v_\e(\cdot\,,t) \Vert_4\leq K_4\,.
\end{equation}
It then follows from (\ref{v3}) and Lemma \ref{Cor1} that there is a constant $\tilde{K}_4$ such that 
\begin{equation}\label{we4}
 \sup_{t>0}\Vert \nabla w_\e(\cdot\,,t) \Vert_4\leq \tilde{K}_4\,.   
\end{equation}
By Lemma \ref{uk}, (\ref{we4}) and the H\"older inequality we infer  that 
\[\sup_{t>0}\Vert u_\e(\cdot\,,t)\nabla w_\e(\cdot\,,t) \Vert_p <\infty\quad\mbox{for some}\quad p\in (3,4)\,.\]
Now  (\ref{u12}) follows readily from (\ref{zinf}) with $f_+=\frac{\mu_1}{4a_1}$.
\end{proof}
\section{Convergence} 
Taking into account the estimates derived in the previous section, we proceed to show the convergence of solutions as $\e\rightarrow 0$ first for the case $N\leq 2$ (in the next section) where there are regular solutions and the compatibility condition (\ref{ic}) is satisfied. Then, in the following section we use the compactness argument to show the convergence in the case $N\geq 3$.
\subsection{Convergence of full sequence as  $\e \rightarrow 0$ for $N\leq 2$.}
In this subsection we show that the solution $(u_\e, v_\e)$ converges, as  $\e\rightarrow 0$, to the solution $(u\,,v)$ of the limit problem 
\begin{align} \label{u}
   u_t - d_u\Delta u &= \chi \nabla \cdot u \nabla v +\mu_1(1-u-a_1v)\,,\\ \label{v}
   v_t -d_v\Delta v &= \mu_2(1-v-a_2v)\,.
\end{align}
 We recall that for the space dimension $N\leq 2$ system (\ref{u})-(\ref{v})  has a global in time $L^\infty$-bounded classical solution. If 
 \begin{equation}\label{icreg2}
 u_0\,,v_0\in W^{1,q}(\Omega)\,, 
 \;  \mbox{ for } q>N
 \end{equation}
 are nonnegative functions, then there exists a global in time unique classical solution to (\ref{u})-(\ref{v}) (cf. \cite{Yagi} and \cite[Sec. 2]{Le} or \cite{MiWr}) such that for any $T>0$ 
\[(u,v)\in (C([0\,,T]:W^{1,r}(\Omega))\cap 
C^{2,1}(\bar{\Omega}\times (0\,,T)))^2\, , \] 
and moreover, there exist constants $\bar{U}_\infty\,,\bar{V}_\infty$ such that 
\begin{equation}\label{uvM}
\sup_{t\in [0\,,\infty)}\Vert u(\cdot,t)\Vert_{\infty}\leq \bar{U}_\infty\,,\quad 
\sup_{t\in [0\,,\infty)}\Vert v(\cdot,t)\Vert_{\infty}\leq \bar{V}_\infty\, , 
\end{equation}   
and if in addition $v_0\in W^{2,2}(\Omega)$ then for any $T>0$ there exists $C_v(T)$
\begin{equation}\label{vdelta}
  \sup_{t\in [0\,,T]}  \|\Delta v(\cdot\,,t) \|_2\leq C_v(T). 
\end{equation}
\begin{lemma}\label{UVW} 
Let $N \leq2$, (\ref{ic}) be satisfied, and in addition to (\ref{icreg}) and (\ref{icreg2}), $v_0\in W^{2,2}(\Omega)$ with $T<\infty$. Then 
for some positive constant $\tilde{C}(T)$  depending on $T$
\begin{equation}\label{ctilde}
\left( 
\int_{\Omega} U_{\epsilon}^2dx +\int_{\Omega} V_{\epsilon}^2dx +\int_{\Omega} |\nabla V_{\epsilon}|^2dx +
 \int_{\Omega} |\nabla W_{\epsilon}|^2dx
\right) \leq \e \tilde{C}(T),     
\end{equation}
where  $$U_{\varepsilon}= u - u_{\varepsilon} , 
\; \; 
V_{\varepsilon}= v- v_{\varepsilon} \; \; W_{\varepsilon}= v-w_{\varepsilon}\,.
$$

\end{lemma}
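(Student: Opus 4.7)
The plan is to derive evolution equations for the differences $U_\e,V_\e$ and $W_\e$ by subtracting the limit system (\ref{uu1})--(\ref{uu2}) from the $\e$-system (\ref{ue1})--(\ref{ue3}), and then close a Gronwall inequality on a suitably weighted energy. Direct subtraction yields
\begin{align*}
U_{\e,t}-d_u\Delta U_\e &= \chi\nabla\cdot(U_\e\nabla v + u_\e\nabla W_\e) + F_u,\\
V_{\e,t}-d_v\Delta V_\e &= F_v,\\
\e W_{\e,t}-\e\Delta W_\e + W_\e &= \e(v_t-\Delta v) + V_\e,
\end{align*}
where the third line comes from combining (\ref{ue3}) with the limit $v$-equation, and $|F_u|,|F_v|\leq C(|U_\e|+|V_\e|)$ pointwise by the $L^\infty$ bounds of Lemma \ref{L1}, Lemma \ref{Cor2} and (\ref{uvM}). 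By the compatibility condition (\ref{ic}) all four quantities $U_\e,V_\e,\nabla V_\e,\nabla W_\e$ vanish at $t=0$. Moreover, $v_0\in W^{2,2}(\Omega)$ together with (\ref{vdelta}) and $v_t=d_v\Delta v+\mu_2 v(1-v-a_2 u)$ yields $\sup_{[0,T]}\|v_t-\Delta v\|_2\leq K(T)$.

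Testing the $V_\e$ equation by $V_\e$ and by $-\Delta V_\e$ gives, after Young, the standard estimates $\tfrac{d}{dt}\|V_\e\|_2^2+d_v\|\nabla V_\e\|_2^2\leq C(\|V_\e\|_2^2+\|U_\e\|_2^2)$ and analogously for $\tfrac{d}{dt}\|\nabla V_\e\|_2^2$. Testing the $U_\e$ equation by $U_\e$, the delicate term is the cross-advection $-\chi\int U_\e\nabla v\cdot\nabla U_\e=\tfrac{\chi}{2}\int U_\e^2\Delta v$ (integration by parts, the boundary integral vanishing by the Neumann condition on $v$). The two-dimensional Gagliardo--Nirenberg inequality (\ref{GN1}) yields $\|U_\e\|_4^2\leq C(\|\nabla U_\e\|_2\|U_\e\|_2+\|U_\e\|_2^2)$, whence $\bigl|\int U_\e^2\Delta v\bigr|\leq \eta\|\nabla U_\e\|_2^2+C_\eta(\|\Delta v\|_2^2+\|\Delta v\|_2)\|U_\e\|_2^2$. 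Choosing $\eta$ small (absorbed by $d_u\|\nabla U_\e\|_2^2$) and controlling the $u_\e\nabla W_\e$ advection via Cauchy--Schwarz and $\|u_\e\|_\infty\leq\bar u_\infty$, we arrive at $\tfrac{d}{dt}\|U_\e\|_2^2\leq C(t)\|U_\e\|_2^2+C\|V_\e\|_2^2+C\|\nabla W_\e\|_2^2$ with $C(t)\in L^1(0,T)$. Testing the $W_\e$ equation by $-\Delta W_\e$, using Cauchy--Schwarz on $\e(v_t-\Delta v)\Delta W_\e$ and integration by parts $-\int V_\e\Delta W_\e=\int\nabla V_\e\cdot\nabla W_\e$ followed by Young, produces $\e\tfrac{d}{dt}\|\nabla W_\e\|_2^2+\|\nabla W_\e\|_2^2\leq \|\nabla V_\e\|_2^2+\e K(T)$.

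Form the combined energy $\mathcal E(t)=\|U_\e\|_2^2+\|V_\e\|_2^2+\|\nabla V_\e\|_2^2+\lambda\e\|\nabla W_\e\|_2^2$, where $\lambda$ is chosen large enough that the $-\lambda\|\nabla W_\e\|_2^2$ dissipation coming from the scaled $W_\e$ inequality absorbs the $+C\|\nabla W_\e\|_2^2$ source in the $U_\e$ estimate. Summing the four differential inequalities gives $\mathcal E'(t)\leq \tilde C(t)\mathcal E(t)+\e\tilde K(T)$ with $\tilde C\in L^1(0,T)$. Since $\mathcal E(0)=0$ by compatibility, Gronwall delivers $\mathcal E(t)\leq \e\tilde C(T)$, which yields the bounds on $\|U_\e\|_2^2,\|V_\e\|_2^2,\|\nabla V_\e\|_2^2$ in (\ref{ctilde}). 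The $\e$-weighting only gives $\|\nabla W_\e\|_2^2=O(1)$ directly from $\mathcal E$, so a bootstrap is needed: inserting the proven $\|\nabla V_\e\|_2^2\leq \e\tilde C(T)$ into the $W_\e$ inequality reduces it to the linear ODE $\e y'(t)+y(t)\leq \e\tilde K(T)$ for $y(t)=\|\nabla W_\e\|_2^2$ with $y(0)=0$, whose explicit solution satisfies $y(t)\leq \e\tilde K(T)$, completing (\ref{ctilde}).

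The main technical obstacle is the handling of $\int U_\e^2\Delta v$: a naive Young split would produce the source $\|\Delta v\|_2^2$, which is only an $O(1)$ $L^1$-in-time quantity and would destroy the $O(\e)$ scaling; the dimension-$N\leq 2$ Gagliardo--Nirenberg rewrite is precisely what converts it into a $\|U_\e\|_2^2$-linear term with integrable coefficient. A secondary subtlety is that the combined Gronwall inequality alone does not deliver the pointwise $O(\e)$ bound on $\|\nabla W_\e\|_2^2$; the second application of the $W_\e$ equation as a fast linear ODE, driven by the now-small forcing $\|\nabla V_\e\|_2^2+\e K(T)$, is what yields the sharp bound.
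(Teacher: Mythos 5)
Your proof is correct and follows essentially the same route as the paper: the same weighted energy $\|U_\e\|_2^2+\|V_\e\|_2^2+\|\nabla V_\e\|_2^2+\lambda\e\|\nabla W_\e\|_2^2$, the same absorption of the $\|\nabla W_\e\|_2^2$ source term by the scaled $W_\e$-dissipation, Gronwall from zero initial data via (\ref{ic}), and the same final bootstrap through the fast ODE $\e y'+y\le \e K(T)$ to upgrade $\|\nabla W_\e\|_2^2$ from $O(1)$ to $O(\e)$. The only (harmless) difference is that you split the chemotaxis difference as $U_\e\nabla v+u_\e\nabla W_\e$ rather than the paper's $U_\e\nabla w_\e+u\nabla W_\e$, so you control the resulting quadratic term through $\|\Delta v\|_2\le C_v(T)$ from (\ref{vdelta}) instead of $\|\nabla w_\e\|_4\le K_4$ from (\ref{we4}); both rest on the same two-dimensional Gagliardo--Nirenberg interpolation.
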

\begin{proof}
After subtracting,  we obtain 
\begin{equation}\label{Uru}
    U_{\varepsilon t}- d_u\Delta U_{\varepsilon } =  
\nabla \cdot \left[  \chi\left(  U_{\e} \nabla  w_\e  + u \nabla  W_{\e} \right) \right]
+ \mu_1 U_{\e}( 1- u -a_1 v )-
\mu_1 u_{\e}( U_{\e}+a_1 V_{\e} )\,.
\end{equation}
Then,  on multiplying  by $U_{\varepsilon}$ and integrating  by parts we use the Young inequality to get, after some computations: 
  \begin{align}\label{Us1}
 \frac{1}{2} \frac{d}{dt} 
 \int_{\Omega} U_{\varepsilon}^2
 + \frac{d_u}{2} 
 \int_{\Omega} |\nabla U_{\varepsilon}|^2
 \leq & \displaystyle   \frac{\chi^2}{d_u} \left(
 \int_{\Omega} U_{\varepsilon}^2 |\nabla w_{\varepsilon} |^2
 +  \int_{\Omega} u^2 |\nabla W_{\varepsilon} |^2\right) 
\\ \nonumber
&+
 \mu_1 \int_{\Omega} U_{\e}^2  -  \mu_1 a_1 \int_{\Omega} U_\e V_\e u_\e dx \,.
 \end{align}
 In view of (\ref{we4}) we get 
$$ \int_{\Omega} U_{\varepsilon}^2 |\nabla w_{\varepsilon} |^2
 \leq \| U_{\varepsilon} \|_{L^4((\Omega)}^2 \| \nabla w_{\varepsilon} \|_{L^{\infty}(0,T: L^4(\Omega))}^2
 \leq K_4^2  \|  U_{\varepsilon} \|^2_{L^4(\Omega) }
 $$
and by  the Gagliardo-Nirenberg inequality (\ref{GN1}) with $s=2$ and the Young inequality we have 
 \begin{align*}
 \frac{\chi^2K_4^2}{d_u}\| U_{\varepsilon} \|^2_{L^4(\Omega) } &\leq \frac{\chi^2K_4^2C_{GN}}{d_u}\left(\|  U_{\varepsilon} \|_{2 } \| \nabla  U_{\varepsilon} \|_{2 } + \| U_{\varepsilon} \|_{2 }^2\right) \\
 &\leq \frac{d_u}{4} \|\nabla U_{\varepsilon} \|^2 + \left(\frac{\chi^4K_4^4C_{GN}^2}{d_u^3} + \frac{\chi^2K_4^2C_{GN}}{d_u} \right) \|  U_{\varepsilon} \|_{2 }^2 \\
 &\leq \frac{d_u}{4} \|\nabla U_{\varepsilon} \|^2 + \gamma_1\|  U_{\varepsilon} \|_{2 }^2\,.
 \end{align*}
  Coming back to (\ref{Us1}) we obtain
 $$ 
\frac{\chi^2}{d_u}\int_{\Omega} u^2 |\nabla W_{\varepsilon} |^2
\leq 
\frac{\chi^2}{d_u}\sup_{t>0}\| u\|_\infty^2 
\|\nabla W_{\varepsilon} \|^2_{2}\leq \frac{\chi^2c}{d_u}\|\nabla W_{\varepsilon} \|^2_{2}\leq \frac{\gamma_2}{2} \|\nabla W_{\varepsilon} \|^2_{2} 
$$
where 
\begin{equation}\label{cm}
 c=\max\{\sup_{t>0}\| u\|_\infty\,, \sup_{t>0}\| u_\e\|_\infty\,, \sup_{t>0}\| v\|_\infty\,, \sup_{t>0}\| v_\e\|_\infty \} 
 \end{equation}
and $\gamma_2:=\frac{2\chi^2c}{d_u}$. It follows that 
\begin{align*}
\frac{1}{2} \frac{d}{dt} 
 \int_{\Omega} U_{\varepsilon}^2
 +  \frac{d_u}{4} 
 \int_{\Omega} |\nabla U_{\varepsilon}|^2\leq \left(\gamma_1   + \mu_1 \left((1+\frac{a_1c}{2}\right) \right) 
 \int_{\Omega} U_{\varepsilon}^2  
 + 
 \frac{\mu_1 a_1}{2} \int_{\Omega} V_{\e}^2  + 
 \frac{\gamma_2}{2}
 \int_{\Omega}   |\nabla W_{\varepsilon}|\,. 
    \end{align*}
Hence,  
\begin{equation} \label{Ue}  
 \frac{d}{dt} \int_{\Omega} U_{\varepsilon}^2
 + \frac{d_u}{2} 
 \int_{\Omega} |\nabla U_{\varepsilon}|^2
 \leq c_1
 \int_{\Omega} U_{\varepsilon}^2 + c_2 \int_{\Omega} V_{\varepsilon}^2 
 + \gamma_2 \int_{\Omega}   |\nabla W_{\varepsilon} |^2 
  \end{equation} 
with $ c_1=2\left(\gamma_1   + \mu_1 (1+\frac{a_1c}{2}) \right)$ and  $ c_2= \mu_1 a_1$. Notice that  $W_{\varepsilon}= v- w_{\epsilon} $ satisfies 
\begin{align*}
 \varepsilon W_{\epsilon t} - \varepsilon \Delta W_{\varepsilon}
+ W_{\varepsilon}
&= V_{\epsilon}+\varepsilon [v_t - \Delta v]   \\
&= V_{\epsilon}+\varepsilon \mu_2[v(1- a_{2} u-v)]+ \e (1-d_v) \Delta v . 
\end{align*}
Next, we multiply by $- \Delta W_{\varepsilon}$ and integrate by parts to obtain  
\begin{align*}
&\frac{d}{dt} \frac{\e}{2} 
\int_{\Omega} |\nabla W_{\e}|^2 + \e\int_{\Omega} |\Delta W_{\e}|^2 dx
+ \int_{\Omega} | \nabla W_{\e}|^2 \\
&= \int_{\Omega} \nabla V_{\e} \nabla W_{\e} -  \e \mu_2 \int_{\Omega } \Delta W_{\e}[v(1- a_{2} u-v)] dx
+ \e (d_v-1)  \int_{\Omega} 
\Delta W_{\e} \Delta v.    
\end{align*}
Since 
$$ \int_{\Omega} \nabla V_{\e} \nabla W_{\e} \leq 
\frac{1}{2} \int_{\Omega} |\nabla V_{\e} |^2 + \frac{1}{2} \int_{\Omega} |\nabla W_{\e} |^2 $$
and by the Young inequality 
$$
 \e \mu_2 \int_{\Omega } \Delta W_{\e} [ v(1- a_{2} u-v)]  
 \leq \frac{\e}{4} \int_{\Omega} |\Delta W_{\e}|^2 +
 \e|\Omega|\beta_1^2
$$
with  
$\beta_1= 2\max\{1,\mu_2\} \max\{1,a_2\}(\max\{1,c\})^2$. Using (\ref{vdelta}) we find 
$$
\e (d_v-1)  \int_{\Omega} 
\Delta w_{\e} \Delta v   
\leq 
\frac{\e}{4} \int_{\Omega} |
\Delta w_{\e} |^2
+ \e (1- d_v)^2 \int_{\Omega} |
\Delta v |^2\leq \frac{\e}{4} \int_{\Omega} |
\Delta w_{\e} |^2 +\e(1+d_v^2)C_v(T)
$$
and finally for $t\leq T$ we obtain 
 \begin{align} 
 \label{We}
  \frac{d}{dt} \frac{\e}{2} 
 \int_{\Omega} |\nabla W_{\e}|^2 +
\frac{\e}{2} \int_{\Omega} |\Delta W_{\e}|^2  
+ \frac{1}{2} \int_{\Omega} |\nabla W_{\e}|^2 \leq \frac{1}{2} \int_{\Omega} |\nabla V_{\e} |^2  +  \\ \nonumber
\e(\beta_1^{2}|\Omega|+ (1+d_v^2)C_v(T)):=\e\beta_2(T).
\end{align}

\noindent
We also notice that  $V_{\varepsilon} = v- v_{\e}$ satisfies: 
\begin{equation}\label{Vru}
V_{\epsilon t} -  d_v \Delta V_{\varepsilon}
=\mu_2  V_{\epsilon} (1- a_2 u -v) - \mu_2  v_{\e}( a_2U_{\e} + V_{\e}   ). 
\end{equation}
On multiplying  this equation by $(V_{\e}-\Delta V_\e)$ and then  integrating  by parts we find using  the Young inequality  
\begin{equation} \label{Ve}
\frac{d}{dt} 
\left(\int_{\Omega} V_{\epsilon}^2 + 
\int_{\Omega} |\nabla V_{\e}|^2\right) +d_v\int_\Omega |\Delta V_\e|^2 \leq 
2\beta_1(2 + \beta_1 d_v)\int V_{\e}^2  +
2\beta_1(\beta_1 +d_v)\int U_{\e}^2 
\end{equation} 
Next,  we multiply the equation  (\ref{We}) by $2\gamma_2$ and add to equations (\ref{Ue}) and (\ref{Ve}) to obtain 
\begin{align*}
 & \frac{d}{dt} \left( 
\int_{\Omega} U_{\epsilon}^2 +\int_{\Omega} V_{\epsilon}^2 +\int_{\Omega} |\nabla V_{\e}|^2 
+  \e \gamma_2
\int_{\Omega} |\nabla W_{\epsilon} |^2 
\right) \\
&\leq \alpha_1 \left(\int U_{\e}^2 + \int_\Omega V_{\e}^2 +\int_{\Omega} |\nabla V_{\e}|^2\right) 
+ \e 2\beta_2(T)\gamma_2   
 \end{align*}
 where $\alpha_1=\max\{\gamma_2\,,c_1\,,c_2\,,2\beta_1(\max\{2,\beta_1\} (1+d_v)\}$
In view of (\ref{ic}) we have  
$$  U_\e(\cdot,0)=V_\e(\cdot,0)=W_\e(\cdot,0)=0
$$
and therefore  by comparison with O.D.E we obtain that for $t\in [0\,,T] $
\begin{align*}
 &\left( 
\int_{\Omega} U_{\epsilon}(\cdot, t)^2 +\int_{\Omega} V_{\epsilon}(\cdot, t)^2 +\int_{\Omega} |\nabla V_{\e}(\cdot, t)|^2 
+ \e \gamma_2
\int_{\Omega} |\nabla W_{\epsilon}(\cdot, t) |^2
\right) \\
& \leq \e \frac{2\beta_2(T) \gamma_2 (e^{\alpha_1 T } -1)}{\alpha_1} :=\e C(T) .   
\end{align*}
From (\ref{We}) we infer that 
 \begin{equation} 
 \label{We1}
  \e \frac{d}{dt}
 \int_{\Omega} |\nabla W_{\e}|^2 +  \int_{\Omega} |\nabla W_{\e}|^2 \leq \e \left(C(T) + 2\beta_2(T) \right) \,
\end{equation} 
and dividing it by $\e$ we obtain  
\begin{equation} 
 \label{We11}
 \frac{d}{dt}
 \int_{\Omega} |\nabla W_{\e}|^2 +  \frac{1}{\e}\int_{\Omega} |\nabla W_{\e}|^2 \leq \left(C(T) + 2\beta_2(T) \right) \,.
\end{equation}Hence, we deduce by comparison with O.D.E. using (\ref{ic}) that  
\begin{equation} \label{We2}
\int_{\Omega} |\nabla W_{\e}(\cdot, t)|^2\leq \e \left(C(T) + 2\beta_2(T) \right)\quad\mbox{for}\quad t\in [0\,,T] 
\end{equation}
and (\ref{ctilde}) follows with $\tilde{C}(T) =2(C(T)+\beta_2(T))$. 
\end{proof} 

We are in a position to formulate one of our main results.
\begin{theorem} \label{L22}
 Let $N \leq2$, (\ref{ic}) be satisfied, and in addition to (\ref{icreg}) and (\ref{icreg2}),$v_0\in W^{2,2}(\Omega)$ with $T<\infty$. Then the solution $(u_\e\,,v_\e)$ converges in the following sense to solution $(u\,,v)$ to (\ref{u})-(\ref{v}) 
 \begin{align} \label{p1}
 \sup_{t\in[0,T]} \|u(\cdot, t)-u_\e(\cdot, t)\|_2\leq \e \tilde{C}(T)  \\ \label{p2}
 \sup_{t\in[0,T]} \|v(\cdot, t)-v_\e(\cdot, t)\|_{W^{1,2}(\Omega)}\leq \e \tilde{C}(T)  \\ \label{p3}
 \sup_{t\in[0,T]} \|\nabla v(\cdot, t)-\nabla w_\e(\cdot, t)\|_2\leq \e \tilde{C}(T)  . 
 \end{align}
Moreover,  
\begin{equation}\label{p4}
\| u- u_\e\|_{L^2(0\,,T:W^{1,2}(\Omega)} \leq \e T\tilde{C}(T)
\end{equation}   
and  for any sequence $\e=\e_j\searrow 0$
\begin{align}\label{p5}
    u_{\e,t} &\rightarrow u_t \quad\mbox{weakly in }\quad L^2(0\,,T:L^{2}(\Omega) ),\\
    \label{p6}
    v_{\e,t} &\rightarrow v_t \quad\mbox{in}\quad L^2(0\,,T:L^{2}(\Omega) ).
\end{align}
\end{theorem}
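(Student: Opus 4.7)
The plan is to obtain (\ref{p1})--(\ref{p4}) as essentially immediate consequences of the pointwise-in-$t$ estimate (\ref{ctilde}) in Lemma \ref{UVW}, and then to upgrade these to convergence of the time derivatives (\ref{p5}) and (\ref{p6}) by exploiting the PDEs directly. Specifically, (\ref{p1}) is just the first summand of (\ref{ctilde}) after taking square roots; (\ref{p2}) follows from the second and third summands after writing $\|V_\e\|_{W^{1,2}}^2 = \|V_\e\|_2^2 + \|\nabla V_\e\|_2^2$; and (\ref{p3}) follows from the fourth summand, using that $\nabla W_\e = \nabla v - \nabla w_\e$. All three are uniform in $t\in[0,T]$ because (\ref{ctilde}) is.

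For (\ref{p4}), I would integrate the differential inequality (\ref{Ue}) on $[0,T]$. The compatibility condition (\ref{ic}) gives $U_\e(\cdot,0)\equiv 0$, and every term on the right-hand side of (\ref{Ue}) is $O(\e)$ uniformly in $t$ by Lemma \ref{UVW}. This yields $\int_0^T \|\nabla U_\e\|_2^2\,dt \leq \e T\tilde{C}(T)$, which combined with (\ref{p1}) delivers the claimed $L^2(0,T;W^{1,2})$ bound.

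For the strong convergence (\ref{p6}), I would rewrite the subtracted equation (\ref{Vru}) as $V_{\e,t} = d_v\Delta V_\e + \mu_2 V_\e(1-a_2 u-v) - \mu_2 v_\e(a_2 U_\e + V_\e)$, then square and integrate over $\Omega_T$. Using the $L^\infty$ bounds (\ref{uvM}) and (\ref{vwinf}) on $u,v,v_\e$, the right-hand side is controlled by a constant multiple of $\|\Delta V_\e\|_{L^2(\Omega_T)}^2 + \|V_\e\|_{L^2(\Omega_T)}^2 + \|U_\e\|_{L^2(\Omega_T)}^2$. The two zero-order terms are $O(\e T)$ by Lemma \ref{UVW}; the Laplacian term is handled by integrating (\ref{Ve}) over $[0,T]$, exploiting $V_\e(\cdot,0)\equiv 0$ and again Lemma \ref{UVW} to conclude $\int_0^T\|\Delta V_\e\|_2^2\leq O(\e)$. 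Hence $\|V_{\e,t}\|_{L^2(\Omega_T)}\to 0$, which is the desired strong convergence of the full sequence.

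The weak convergence (\ref{p5}) is the most delicate step, and is where I expect the main obstacle. The strategy has two parts. First, establish a uniform-in-$\e$ bound $\|u_{\e,t}\|_{L^2(\Omega_T)}\leq C(T)$ by testing (\ref{ue1}) against $u_{\e,t}$: Young's inequality absorbs half of $\|u_{\e,t}\|_2^2$, while the chemotactic contribution $\chi\int u_{\e,t}\,\nabla\cdot(u_\e\nabla w_\e)$ is controlled by expanding $\nabla\cdot(u_\e\nabla w_\e) = \nabla u_\e\cdot\nabla w_\e + u_\e\Delta w_\e$ and invoking the $L^\infty$ bound on $u_\e$ from Lemma \ref{Cor2}, the $L^\infty(0,T;L^4)$ bound on $\nabla w_\e$ from (\ref{we4}), and the $L^2(0,T;L^2)$ bound on $\Delta w_\e$ from Lemma \ref{L6}; the reaction term is bounded by the $L^\infty$ estimates on $u_\e,v_\e$. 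Second, extract from any sequence $\e_j\searrow 0$ a weakly convergent sub-subsequence; the strong convergence $u_\e\to u$ in $L^2(\Omega_T)$ from (\ref{p1}) forces the distributional, and hence the weak-$L^2$, limit of $u_{\e,t}$ to coincide with $u_t$, so uniqueness of the limit promotes the convergence to the whole sequence. The main technical challenge is to close the energy identity for $u_{\e,t}$ cleanly under only $L^4$-type control on $\nabla w_\e$ and $L^2$-type control on $\Delta w_\e$, which is precisely where the restriction $N\leq 2$ is used.
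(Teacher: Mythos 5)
Your treatment of (\ref{p1})--(\ref{p4}) coincides with the paper's: these are read off from (\ref{ctilde}) and from integrating (\ref{Ue}) with $U_\e(\cdot,0)=0$. Your route to (\ref{p6}) (squaring (\ref{Vru}) and controlling $\Vert\Delta V_\e\Vert_{L^2(\Omega_T)}$ by integrating (\ref{Ve}) in time) differs superficially from the paper, which simply multiplies (\ref{Vru}) by $V_{\e,t}$, absorbs $\frac12\int|V_{\e,t}|^2$ by Young, and integrates using $V_\e(\cdot,0)=0$; both arguments are sound and the paper's avoids the detour through the Laplacian.

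The genuine problem is your first step for (\ref{p5}). You propose to prove a uniform bound $\Vert u_{\e,t}\Vert_{L^2(\Omega_T)}\le C(T)$ by testing (\ref{ue1}) with $u_{\e,t}$, and you claim the chemotactic contribution is controlled by the $L^\infty$ bound on $u_\e$, the $L^\infty(0,T;L^4)$ bound on $\nabla w_\e$, and the $L^2(\Omega_T)$ bound on $\Delta w_\e$. The piece $u_\e\Delta w_\e$ is indeed fine, but the piece $\int_{\Omega_T} u_{\e,t}\,\nabla u_\e\cdot\nabla w_\e$ is not: to absorb $\tfrac12\Vert u_{\e,t}\Vert_2^2$ by Young you need $\nabla u_\e\cdot\nabla w_\e\in L^2(\Omega_T)$ uniformly in $\e$, and with $\nabla w_\e\in L^\infty(0,T;L^4(\Omega))$ this requires $\nabla u_\e\in L^2(0,T;L^4(\Omega))$, equivalently (via (\ref{GN2})) a uniform bound on $\Vert\Delta u_\e\Vert_{L^2(\Omega_T)}$. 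The paper only establishes $\nabla u_\e\in L^2(0,T;L^2(\Omega))$ (Theorem \ref{lnu2}, estimate (\ref{lnn2})); no second-order estimate on $u_\e$ is available, and producing one by maximal regularity would be circular since the forcing contains the very term $\nabla u_\e\cdot\nabla w_\e$ you are trying to bound (it could perhaps be closed by an interpolation--absorption argument, but that is an additional nontrivial step you have not supplied). The paper sidesteps this entirely: it tests the \emph{difference} equation (\ref{Uru}) against $\varphi\in L^2(0,T;W^{1,2}(\Omega))$, integrates by parts so that the flux terms land on $\nabla\varphi$ rather than pairing $u_{\e,t}$ with $\nabla u_\e$, and then sends each term to zero using (\ref{p1})--(\ref{p4}), the weak-$\star$ vanishing of $U_\e$ in $L^\infty(\Omega_T)$, and (\ref{p3}) for the term $\int U_\e\nabla w_\e\cdot\nabla\varphi$; the subsequence--sub-subsequence argument then upgrades this to the full sequence. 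You should either adopt that duality argument or supply the missing $\Delta u_\e$ estimate; as written, the uniform $L^2(\Omega_T)$ bound on $u_{\e,t}$ is not justified.
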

\begin{proof}
Notice that (\ref{p1})-\ref{p3}) stem immediately from Lemma \ref{UVW} while (\ref{p4}) results from (\ref{Ue}) integrating with respect to $t$ and using (\ref{p1}) -(\ref{p3}).
To get (\ref{p5}) we choose any sequence $\e=\e_j\searrow 0$ and multiply (\ref{Uru}) by a test function $\varphi\in L^2(0\,,T:L^2(\Omega))$ and after integration using the Cauchy inequality we obtain
\begin{align}\label{slu}
&\left\vert\int_0^T \int_\Omega (u_t-u_{\e,t})\varphi dxdt \right\vert \leq d_u\Vert U_\e\Vert_{L^2(0\,,T:W^{1,2}(\Omega)}\Vert \varphi\Vert_{L^2(0\,,T:W^{1,2}(\Omega)} \\ \label{slu2}
&+ \chi\int_0^T\int_\Omega U_\e \nabla w_\e \nabla\varphi + \chi Tc\Vert W_\e\Vert_{L^2(0\,,T:W^{1,2}(\Omega)}\Vert \varphi\Vert_{L^2(0\,,T:W^{1,2}(\Omega)} \\
&+C_0\left(\Vert U_\e\Vert_{L^2(0\,,T:L^{2}(\Omega)} +\Vert V_\e\Vert_{L^2(0\,,T:L^{2}(\Omega)} \right) \Vert \varphi\Vert_{L^2(0\,,T:W^{1,2}(\Omega)}
\end{align}
where $C_0$ is a constant depending on $\mu_1\,,a_1$ and $c$ from (\ref{cm}). In order to pass to the limit in the first term in (\ref{slu2}) we notice that due to (\ref{p1}) and $L^\infty(\Omega)$ bound on $ U_{\e}$ we infer that from any subsequence of $\{\e_j\}_{j\geq 0}$ we can choose a subsequence $\{\tilde{\e}_j\}_{j\geq 0}$ such that 
\begin{equation}\label{star}
  U_{\tilde{\e}} \rightarrow 0 \quad\mbox{weakly $\star$ in }\quad L^\infty(0\,,T:L^{\infty}(\Omega) )
\end{equation}
and owing to (\ref{p3}) we deduce that 
\begin{equation}\label{star1}
  \nabla w_{\tilde{\e}}\nabla\varphi \rightarrow v\nabla\varphi \quad\mbox{in }\quad L^1(0\,,T:L^{1}(\Omega))\,.
\end{equation}
 Since for any subsequence the limit is identical, we infer the convergence of the full sequence  and finally making use of (\ref{p1})-(\ref{p3}) we conclude that (\ref{p5}) is true, while (\ref{p6}) readily follows from (\ref{Vru}) after multiplying by $V_{\e,t}$ and using (\ref{p1})-(\ref{p2}). 

\end{proof}
In light of (\ref{p1})-(\ref{p6}) and (\ref{star})-\ref{star1})  we are in a position to let $\e\rightarrow 0$ in the following weak formulation 
\begin{align*}
     \int_{0}^T \int_{\Omega} u_{\e,t} \varphi &= -d_u \int_{0}^T \int_{\Omega} \nabla u_\e \nabla \varphi
- \chi \int_{0}^T \int_{\Omega} u_\e \nabla w_\e \nabla \varphi \\
&+  \mu_1\int_{0}^T \int_{\Omega} (u_\e (1- u_\e - a_1 v_\e) ) \varphi\\
 \int_{0}^T \int_{\Omega} v_{\e,t } \psi 
=&   \int_{0}^T \int_{\Omega} \nabla v_\e \nabla \psi +
\mu_2\int_{0}^T \int_{\Omega} ( v_\e (1- v_\e - a_2 u_\e) ) \psi
\end{align*}
 for all $\varphi\,,\psi\in L^2(0\,, T: W^{1,2}(\Omega))$
and as a result we obtain 
\begin{align}\label{uw1}
     \int_{0}^T \int_{\Omega} u_{t} \varphi &= -d_u \int_{0}^T \int_{\Omega} \nabla u \nabla \varphi
- \chi \int_{0}^T \int_{\Omega} u \nabla v \nabla \varphi \\ \nonumber
& +  \mu_1\int_{0}^T \int_{\Omega} (u( 1- u - a_1 v) ) \varphi\\ \label{vw2}
 \int_{0}^T \int_{\Omega} v_{t} \psi 
=&   \int_{0}^T \int_{\Omega} \nabla v \nabla \psi +
\mu_2\int_{0}^T \int_{\Omega} (v ( 1- v - a_2 u) ) \psi
\end{align}
 for all $\varphi\,,\psi\in L^2(0\,, T: W^{1,2}(\Omega))$.

\bigskip
\subsection{Uniqueness of the solutions} 
There arises a question whether a $L^2$ weak solution solution $(u,v)$ to (\ref{uw1})-(\ref{vw2})  is uniquely determined. Let us define a class of functions 
\begin{align*}
 \mathcal{Y}=\{(\sigma\,,\eta)\in \left[ L^2(0\,, T: W^{1,2}(\Omega))\cap W^{1,2} (0\,, T: L^{2}(\Omega))
\cap  L^\infty(\Omega_T)\right]^2\,, \\ \nabla \eta \in  L^\infty(0,T:L^4(\Omega))\}   .
\end{align*}
\begin{theorem} \label{uniq}
 For $N\leq 2$ in the class $\mathcal{Y}$ there is a unique weak solution to (\ref{uw1})-(\ref{vw2})  starting from the initial data $(u_0\,,v_0)\in (L^\infty(\Omega)\times W^{1,4}(\Omega))^2$.
\end{theorem}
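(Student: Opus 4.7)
Suppose $(u_1,v_1),(u_2,v_2)\in \mathcal{Y}$ are two weak solutions of (\ref{uw1})--(\ref{vw2}) emanating from the same initial datum, and set $U:=u_1-u_2$, $V:=v_1-v_2$. Subtracting the two weak formulations, $U$ and $V$ satisfy in the distributional sense
\begin{align*}
U_t - d_u\Delta U &= \chi\,\nabla\cdot\bigl(U\nabla v_1 + u_2\nabla V\bigr) + R_1,\\
V_t - d_v\Delta V &= R_2,
\end{align*}
where $R_1=\mu_1[U(1-u_1-u_2)-a_1(Uv_1+u_2V)]$ and $R_2=\mu_2[V(1-v_1-v_2)-a_2(Uv_2+u_1V)]$, together with homogeneous Neumann boundary conditions and $U(\cdot,0)=V(\cdot,0)=0$. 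Because $U,V\in L^2(0,T;W^{1,2}(\Omega))\cap W^{1,2}(0,T;L^2(\Omega))$ by the very definition of $\mathcal{Y}$, they can legitimately be used as test functions in their own equations, and the plan is to establish a Gronwall-type inequality for the weighted energy $E(t):=\|U(\cdot,t)\|_2^2+\lambda\|V(\cdot,t)\|_2^2$ with a suitably large constant $\lambda>0$ to be fixed below.

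Testing the $U$-equation by $U$, integrating by parts, and using $u_1,u_2,v_1,v_2\in L^\infty(\Omega_T)$ to bound the reaction contributions yields
\[
\tfrac{1}{2}\tfrac{d}{dt}\|U\|_2^2 + d_u\|\nabla U\|_2^2 \leq \chi\int_\Omega |U||\nabla v_1||\nabla U|\,dx + \chi\int_\Omega |u_2||\nabla V||\nabla U|\,dx + C_1\bigl(\|U\|_2^2+\|V\|_2^2\bigr).
\]
The cross-diffusion piece $\chi\int |u_2||\nabla V||\nabla U|$ is controlled by $\tfrac{d_u}{4}\|\nabla U\|_2^2 + K_1\|\nabla V\|_2^2$ via Young's inequality and $\|u_2\|_\infty<\infty$. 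For the delicate chemotactic term I would use H\"older to get $\chi\|\nabla v_1\|_4\|U\|_4\|\nabla U\|_2$, where $\sup_{t\in[0,T]}\|\nabla v_1(\cdot,t)\|_4<\infty$ is afforded precisely by the $\nabla\eta\in L^\infty(0,T;L^4(\Omega))$ clause in the definition of $\mathcal{Y}$, and then invoke the planar Gagliardo-Nirenberg inequality (\ref{GN1}) with $\theta=\tfrac{1}{2}$, $s=2$, giving $\|U\|_4^2\leq C_{GN}(\|U\|_2\|\nabla U\|_2+\|U\|_2^2)$, followed by two Young inequalities, to reduce the whole term to $\tfrac{d_u}{4}\|\nabla U\|_2^2 + K_2\|U\|_2^2$. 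Testing the $V$-equation by $V$ is much easier, as it carries no chemotactic flux, and produces $\tfrac{1}{2}\tfrac{d}{dt}\|V\|_2^2+d_v\|\nabla V\|_2^2\leq C_2(\|U\|_2^2+\|V\|_2^2)$.

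Choosing $\lambda>K_1/d_v$ and summing the $U$-inequality with $\lambda$ times the $V$-inequality absorbs the leftover $K_1\|\nabla V\|_2^2$ into $\lambda d_v\|\nabla V\|_2^2$ and leaves
\[
\frac{d}{dt}E(t) \leq K\,E(t) \quad \text{on } [0,T], \qquad E(0)=0,
\]
whence Gronwall's lemma forces $E\equiv 0$, that is, $u_1=u_2$ and $v_1=v_2$ a.e., giving uniqueness. The main obstacle throughout is the chemotactic cross-term $\chi U\nabla v_1$: it is the reason the class $\mathcal{Y}$ is specifically tuned with $\nabla v\in L^\infty(0,T;L^4(\Omega))$, which is exactly the integrability that, in two dimensions, permits the H\"older--Gagliardo-Nirenberg chain above to close; in particular, no smallness restriction on $\chi$ or on the data is needed, and higher-order regularity of $V$ (such as $L^2(0,T;W^{2,2})$) is avoided altogether by testing the $V$-equation only against $V$ itself and compensating through the parameter $\lambda$.
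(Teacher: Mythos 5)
Your proof is correct, and while it shares the paper's overall skeleton (subtract the two solutions, test by the differences, split the chemotactic flux as $U\nabla v_i+u_j\nabla V$, control $\int_\Omega U^2|\nabla v_i|^2$ via H\"older with $\nabla v_i\in L^\infty(0,T;L^4(\Omega))$ and the planar Gagliardo--Nirenberg inequality, then Gronwall), it differs in one genuinely substantive step: the treatment of the term $\int_\Omega u_j^2|\nabla V|^2$ coming from the cross-diffusion. The paper handles it by enlarging the Gronwall functional to $\int_\Omega U^2+\int_\Omega V^2+\int_\Omega|\nabla V|^2$, which forces testing the $V$-equation by $V-\Delta V$ and hence implicitly uses $\Delta V\in L^2$ --- regularity that the class $\mathcal{Y}$ as written does not literally provide (it only puts $\eta$ in $L^2(0,T;W^{1,2}(\Omega))$), though it is recoverable for the semilinear $v$-equation by parabolic regularity. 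You instead keep the energy as $\|U\|_2^2+\lambda\|V\|_2^2$, test the $V$-equation only by $V$, and absorb the leftover $K_1\|\nabla V\|_2^2$ into the dissipation $\lambda d_v\|\nabla V\|_2^2$ by taking $\lambda>K_1/d_v$. This buys you a proof that stays entirely within the regularity guaranteed by $\mathcal{Y}$ and avoids any second-order estimate on $V$, at the mild cost of introducing the weight $\lambda$; the paper's version, in exchange, directly propagates control of $\|\nabla V(\cdot,t)\|_2$ in time, which is consistent with the quantitative estimates it proves elsewhere (e.g.\ in Lemma \ref{UVW}). Both arguments close without any smallness condition on $\chi$, and your identification of the $\nabla\eta\in L^\infty(0,T;L^4(\Omega))$ clause as the exact ingredient that makes the chemotactic term tractable matches the paper's use of $\|\nabla v_2\|_4$.
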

\begin{proof} We present the proof for $N=2$, since the proof for $N=1$ is similar, where only  the exponents used in the Gagliardo-Nirenberg inequality have to be modified, we omit the details for this case.

Assume that there are two solutions in the class $\mathcal{Y}$; $(u_1,v_1)$, $(u_2,v_2)$ with the same initial condition and let 
$$ U=u_1-u_2\,, V=v_1-v_2\,.$$
Then,  after the subtraction of suitable equations, integration with respect to $x\in\Omega$ and making use of the Young inequality we obtain
\begin{align*}
\frac{1}{2}\frac{d}{dt} \int_\Omega U^2+d_u\int_\Omega |\nabla U|^2\leq \chi\int_\Omega (U\nabla v_2 + u_1\nabla V)\nabla U + C_1\left(\int_\Omega U^2 +\int_\Omega V^2\right)    
\end{align*}
and 
$$
\frac{1}{2}\frac{d}{dt}\left( \int_\Omega V^2+\int_\Omega |\nabla V|^2\right)+d_v\int_{\Omega} |\nabla V|^2\leq  \mu_2 \int_{\Omega} |\nabla V|^2 +C_2\left(\int_\Omega U^2 +\int_\Omega V^2\right)    
$$
where $C_1$ and $C_2$ are constants depending on 
$a_1\,,a_2\,,\mu_1\,,\mu_2\,,\|u\|_{L^\infty(\Omega_T)}\,,\|v\|_{L^\infty(\Omega_T)}$.
Notice that  by the Young inequality,  we obtain
\begin{align*}
 \chi\int_\Omega (U\nabla v_2 + u_1\nabla V)\nabla U &\leq \frac{d_u}{4} \int_\Omega |\nabla U|^2 + \frac{\chi^2}{d_u}\|u_1\|_{L^\infty(\Omega_T)}^2\int_\Omega |\nabla V |^2 \\
 &+\frac{d_u}{4} \int_\Omega |\nabla U|^2 +\frac{\chi^2}{d_u} \int_\Omega U^2|\nabla v_2|^2 
  \end{align*}
  and by the Gagliardo-Nirennberg inequality, (\ref{GN1}) and the Young inequality, we have
 $$ \begin{array}{lcl}  \displaystyle 
 \frac{\chi^2}{d_u} \int_\Omega U^2|\nabla v_2|^2 & \leq
 & 
 \frac{\chi^2C_{GN}}{d_u}\left(\|\nabla U\|_2 \|U\|_2 + \|U\|_2^2\right)\|\nabla v_2\|_4^2\\ [2mm] 
 & \leq & \displaystyle 
 \frac{d_u}{4} \int_\Omega |\nabla U|^2 + \left(  \frac{\chi^4C_{GN}^2}{d_u^2}  \|\nabla v_2\|_4^4 +  \|\nabla v_2\|_4^2 \right) \int_\Omega U^2\,.
 \end{array}$$
Finally, by definition of $\mathcal{Y}$, we infer that there is a constant $C_3$ such that 
$$
\frac{1}{2}\frac{d}{dt} \left( \int_\Omega U^2 + \int_\Omega V^2 +\int_\Omega |\nabla V|^2\right) +\frac{d_u}{4} \int_\Omega |\nabla U|^2\leq 
C_3\left( \int_\Omega U^2 + \int_\Omega V^2 +\int_\Omega |\nabla V|^2\right)
$$
whence the result readily follows by Gronwall$^{\prime}$s Lemma.
\end{proof}

\subsection{Convergence by using the compactness argument} 
Due to the lack of suitable estimates for $N>2$ testing the $u$-equation by $u_\e$ is not allowed, and we proceed in a way similar to that in \cite{Wink1} and \cite{Lankeit} arriving at the concept of weak solution close to that in the aforementioned paper where the Keller-Segel system with logistic growth was studied. In this section $\e=\e_j\searrow 0$ is assumed to be a sequence of numbers. The case $N\leq 2$ will be considered at the end of this section.
\begin{lemma} \label{lnu} 
Suppose that $N\geq 1$ and $T>0$, then 
\begin{equation}\label{lnn} 
\int_0^T \int_{\Omega} |\ln (1+ u_\e) |^2+\int_0^T \int_{\Omega} |\nabla \ln(1+ u_\e) |^2 \leq c\,.
\end{equation}
\end{lemma}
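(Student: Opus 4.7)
The natural approach is to test the $u_\e$-equation by $\phi = 1/(1+u_\e)$, since this produces the desired quantity $|\nabla\ell_\e|^2 = |\nabla u_\e|^2/(1+u_\e)^2$, where $\ell_\e := \ln(1+u_\e)$. Integrating by parts and using the homogeneous Neumann boundary condition (which kills both boundary contributions because $\partial_\nu u_\e = \partial_\nu w_\e = 0$), one obtains the identity
\begin{equation*}
\frac{d}{dt}\int_\Omega \ln(1+u_\e)\,dx - d_u\int_\Omega |\nabla\ell_\e|^2\,dx = \chi\int_\Omega \frac{u_\e}{1+u_\e}\,\nabla w_\e\cdot \nabla\ell_\e\,dx + \mu_1\int_\Omega \frac{u_\e(1-u_\e-a_1 v_\e)}{1+u_\e}\,dx.
\end{equation*}
Crucially, the diffusion term appears here with an unfavorable sign: dissipation is not immediate and can be recovered only by regarding $\int_\Omega \ln(1+u_\e)$ itself as a bounded primitive and integrating in time.

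Rearranging and integrating from $0$ to $T$ yields
\begin{equation*}
d_u\int_0^T\!\!\int_\Omega |\nabla\ell_\e|^2 = \int_\Omega \ln(1+u_\e(\cdot,T)) - \int_\Omega \ln(1+u_{\e,0}) - \chi\int_0^T\!\!\int_\Omega \frac{u_\e \nabla w_\e\cdot \nabla\ell_\e}{1+u_\e} - \mu_1\int_0^T\!\!\int_\Omega \frac{u_\e(1-u_\e-a_1 v_\e)}{1+u_\e}.
\end{equation*}
Using $\ln(1+s)\leq s$ together with the mass bound (\ref{M1}), the two endpoint-in-time contributions are uniformly controlled by $\bar u_1 + \|u_{\e,0}\|_1$. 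Since $u_\e/(1+u_\e)\leq 1$, Young's inequality combined with the uniform bound (\ref{wD}) on $\|\nabla w_\e(\cdot,t)\|_2$ handles the chemotactic term,
\begin{equation*}
\left|\chi\int_0^T\!\!\int_\Omega \frac{u_\e}{1+u_\e}\nabla w_\e\cdot\nabla\ell_\e\right|\leq \frac{d_u}{2}\int_0^T\!\!\int_\Omega |\nabla\ell_\e|^2 + \frac{\chi^2 C_3 T}{2 d_u},
\end{equation*}
which absorbs half of the left-hand integral. The reaction integrand is pointwise dominated by $\mu_1(1+u_\e + a_1 v_\e)$, so (\ref{M1}) yields a control of the form $\mu_1 T(|\Omega|+\bar u_1 + a_1 \bar v_1)$. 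Altogether, one obtains $\int_0^T\int_\Omega |\nabla \ell_\e|^2 \leq c$.

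To upgrade this to $L^2$-control of $\ell_\e$ itself, I would invoke the Poincaré-Wirtinger inequality $\|\ell_\e(t) - \overline{\ell_\e}(t)\|_2 \leq C_P\|\nabla \ell_\e(t)\|_2$, where the spatial average $\overline{\ell_\e}(t) = |\Omega|^{-1}\int_\Omega \ln(1+u_\e(\cdot,t))$ is uniformly bounded by $\bar u_1/|\Omega|$, again via $\ln(1+s)\leq s$. Squaring, integrating over $[0,T]$, and combining with the previous gradient estimate yields $\int_0^T\int_\Omega |\ln(1+u_\e)|^2 \leq c$, completing (\ref{lnn}). The main subtlety is the inverted sign in the initial energy identity, which precludes any direct Gronwall or differential-inequality argument and forces the use of the time-integrated form; notably, no $L^2$ bound on $u_\e$ is required, which is essential because for $N\geq 3$ only the $L^1$ bound (\ref{M1}) is available.
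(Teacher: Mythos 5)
Your proof is correct, and for the substantive part of the lemma --- the bound on $\int_0^T\int_\Omega|\nabla\ln(1+u_\e)|^2$ --- it is exactly the paper's argument: test the $u$-equation with $1/(1+u_\e)$, accept the inverted sign of the diffusion term, integrate in time, control the endpoint terms by $\ln(1+y)\le y$ and (\ref{M1}), absorb the chemotactic term via Young's inequality and (\ref{wD}), and dominate the reaction term by the $L^1$ bounds. The only divergence is in the zeroth-order term: you obtain $\int_0^T\int_\Omega|\ln(1+u_\e)|^2\le c$ from the gradient bound via Poincar\'e--Wirtinger together with the uniform bound on the spatial mean, whereas the paper disposes of it in one line by writing $|\ln(1+u_\e)|^2\le u_\e^2$ and invoking the space-time bound $\int_0^T\int_\Omega u_\e^2\le C_1(T)$ from (\ref{C1C2}). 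Both routes work; yours is marginally more self-contained, but your closing remark that ``no $L^2$ bound on $u_\e$ is available for $N\ge 3$'' is inaccurate as stated: the \emph{space-time} $L^2$ bound (\ref{C1C2}) holds for every $N\ge 1$ (it comes from integrating the logistic damping) and is what the paper actually uses here; what fails for $N\ge 3$ is only the uniform-in-time $L^2$ (and higher) control of $u_\e$.
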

\begin{proof} 
First, we infer from the obvious inequality
\begin{equation} \label{ln}
\ln{(1+y)}\leq y
\end{equation}
and (\ref{C1C2}) that the first summand in (\ref{lnn}) is bounded. 
On multiplying the $u$-equation  by $\frac{1}{1+u_{\e}}$  we  obtain 
\begin{align*}
\frac{d}{dt}\int_\Omega \ln (1+u_\e) dx =& d_u\int_\Omega \frac{|\nabla u_\e|^2}{(1+u_\e)^2} dx + \int_\Omega 
\frac{u_\e}{(1+u_\e)^2}\nabla w_\e \nabla u_\e dx \\
&+ \mu_1\int_\Omega \frac{u_\e}{1+u_\e} dx- \mu_1\int_\Omega \frac{u_\e^2}{1+u_\e} dx-\mu_1 a_1 \int_\Omega \frac{u_\e v_\e}{1+u_\e}dx\,.    
\end{align*}
Making use of the Young inequality to the term
\[ \int_\Omega 
\frac{u_\e}{(1+u_\e)^2}\nabla w_\e \nabla u_\e dx=\int_\Omega 
\left(\frac{u_\e}{1+u_\e}\right)\left(\frac{\nabla u_\e}{1+u_\e}\right) \nabla w_\e  dx,\]
it results in 
\begin{align*}
&d_u\int_0^T\int_\Omega \frac{|\nabla u_\e|^2}{(1+u_\e)^2} dx\leq \frac{d_u}{2} \int_0^T\int_\Omega \frac{|\nabla u_\e|^2}{(1+u_\e)^2} dx+ \frac{1}{2d_u}\int_0^T\int_\Omega |\nabla w_\e|^2 dx\\
&+\int_\Omega \ln (1+u(T,\cdot))dx -\mu_1\int_0^T\int_\Omega\frac{u_\e}{1+u_\e}dx + \mu_1\int_0^T\int_\Omega\frac{u_\e^2}{1+u_\e}dx + \mu_1 a_1 \int_0^T\int_\Omega\frac{u_\e v_\e}{1+u_\e}dx. 
\end{align*}
Then we use (\ref{ln}) along with (\ref{M1}) as well as (\ref{wD}) to conclude that  there is a constant $c(T)$ such that 
 $$ \int_0^T \int_{\Omega} |\nabla \log(1+ u_\e) |^2 dx =\int_0^T\int_\Omega \frac{|\nabla u_\e|^2}{(1+u_\e)^2} dx \leq c(T)\,.$$
 Hence, (\ref{lnn}) follows. 
 \end{proof}
 It allows us to deduce additional information about the sequence $\{u_\e\}$ itself. 
\begin{lemma}
 Suppose that $N\geq 1$ and $T>0$, then there exist  constants $\tilde{C}_1\,, \tilde{C}_2\,,\tilde{C}_3(T)$ such that 
\begin{align}\label{L43} 
\|u_\e\|_{L^{\frac{4}{3}}(0,T:W^{1,\frac{4}{3}}(\Omega))} &\leq \tilde{C}_1(T)   ,  \\ \label{Lt43}
\|u_{\e,t}\|_{L^1(0,T:(W^{2,\infty}(\Omega))^*) } &\leq \tilde{C}_2(T),\\ \label{ln43} 
\int_0^T \int_\Omega u_\e^2\ln(1+u_\e)dx dt &\leq \tilde{C}_3(T).
\end{align}
\end{lemma}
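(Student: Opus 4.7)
The plan is to derive all three bounds from a single entropy-style test of the $u$-equation against $\ln(1+u_\e)$, a Lankeit-type technique adapted to the present model. Multiplying \eqref{ue1} by $\ln(1+u_\e)$, integrating by parts under the Neumann boundary condition, and moving the $-\mu_1 u_\e^2\ln(1+u_\e)$ produced by the logistic damping to the left, I obtain
\begin{align*}
&\frac{d}{dt}\int_\Omega [(1+u_\e)\ln(1+u_\e)-u_\e] + d_u\int_\Omega \frac{|\nabla u_\e|^2}{1+u_\e} + \mu_1\int_\Omega u_\e^2\ln(1+u_\e) \\
&\le -\chi\int_\Omega \frac{u_\e\nabla u_\e\cdot\nabla w_\e}{1+u_\e} + \mu_1\int_\Omega u_\e\ln(1+u_\e),
\end{align*}
after discarding the non-positive term $-\mu_1 a_1\int u_\e v_\e\ln(1+u_\e)$.

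The step I expect to be the main obstacle is controlling the chemotactic cross term on the right without any smallness hypothesis on the parameters. Using the algebraic identity $u_\e/(1+u_\e)=1-1/(1+u_\e)$ together with two integrations by parts against $\nabla w_\e$ (permitted by the Neumann condition on $w_\e$), one rewrites
\[
-\chi\int_\Omega \frac{u_\e\nabla u_\e\cdot\nabla w_\e}{1+u_\e} = \chi\int_\Omega [u_\e-\ln(1+u_\e)]\Delta w_\e.
\]
Since $0\le u_\e-\ln(1+u_\e)\le u_\e$ for $u_\e\ge 0$, Cauchy--Schwarz in space bounds this by $\chi\|u_\e\|_2\|\Delta w_\e\|_2$, while the elementary inequality $\ln(1+s)\le s$ controls the last lower order term by $\mu_1\|u_\e\|_2^2$. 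Integrating the identity on $[0,T]$ and applying Cauchy--Schwarz in time together with \eqref{C1C2} and \eqref{LapW} yields simultaneously \eqref{ln43} and the auxiliary dissipation bound $\int_0^T\!\int_\Omega |\nabla u_\e|^2/(1+u_\e)\,dxdt\le C(T)$.

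Estimate \eqref{L43} is now a plain H\"older split. Writing $|\nabla u_\e|^{4/3}=(|\nabla u_\e|^2/(1+u_\e))^{2/3}(1+u_\e)^{2/3}$ and applying H\"older with exponents $3/2$ and $3$ in both space and time gives
\[
\int_0^T\!\!\int_\Omega|\nabla u_\e|^{4/3}\,dxdt \le \Bigl(\int_0^T\!\!\int_\Omega\frac{|\nabla u_\e|^2}{1+u_\e}\,dxdt\Bigr)^{2/3}\Bigl(\int_0^T\!\!\int_\Omega(1+u_\e)^2\,dxdt\Bigr)^{1/3},
\]
both factors being bounded by the previous step and by \eqref{C1C2}. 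The $L^{4/3}(\Omega_T)$ bound on $u_\e$ itself is immediate from $L^2(\Omega_T)\hookrightarrow L^{4/3}(\Omega_T)$ on the bounded set $\Omega_T$.

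Finally \eqref{Lt43} is obtained by duality. For $\varphi\in W^{2,\infty}(\Omega)$ with $\|\varphi\|_{W^{2,\infty}}\le 1$, testing the weak form of \eqref{ue1} and shifting both spatial derivatives onto $\varphi$ yields
\[
|\langle u_{\e,t},\varphi\rangle|\le d_u\|u_\e\|_1\|\Delta\varphi\|_\infty + \chi\,\|u_\e\,|\nabla w_\e|\,\|_1\,\|\nabla\varphi\|_\infty + \mu_1\|u_\e(1-u_\e-a_1v_\e)\|_1\|\varphi\|_\infty.
\]
Each $L^1(\Omega)$ norm on the right is integrable in time on $[0,T]$: $\|u_\e\|_1$ by \eqref{M1}, $\|u_\e|\nabla w_\e|\,\|_1\le\|u_\e\|_2\|\nabla w_\e\|_2$ by \eqref{C1C2} and \eqref{wD}, $\|u_\e^2\|_1$ by \eqref{C1C2}, and $\|u_\e v_\e\|_1\le\bar{v}_\infty\|u_\e\|_1$ via \eqref{M1} and \eqref{vwinf}. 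This gives \eqref{Lt43}.
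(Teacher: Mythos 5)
Your proof is correct, and it is the same Lankeit-type strategy the paper follows (entropy test with $\ln(1+u_\e)$, H\"older splitting for the gradient, duality against $W^{2,\infty}$ test functions for the time derivative); your treatment of (\ref{Lt43}) is essentially identical to the paper's. The one genuine difference is in how the dissipation needed for (\ref{L43}) is produced. The paper obtains it from Lemma \ref{lnu}, i.e.\ from testing the $u$-equation with $\frac{1}{1+u_\e}$, which yields only $\int_0^T\int_\Omega \frac{|\nabla u_\e|^2}{(1+u_\e)^2}\le c$, and then refers to Lankeit for the H\"older step; note that with only $u_\e\in L^2(\Omega_T)$ from (\ref{C1C2}), the factorization $|\nabla u_\e|^{4/3}=\bigl(\frac{|\nabla u_\e|^2}{(1+u_\e)^2}\bigr)^{2/3}(1+u_\e)^{4/3}$ forces the exponent pair $(3/2,3)$ and hence a factor $\int(1+u_\e)^4$, which is not available. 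Your single test with $\ln(1+u_\e)$ delivers the stronger bound $\int_0^T\int_\Omega \frac{|\nabla u_\e|^2}{1+u_\e}\le C(T)$ (together with (\ref{ln43}) in one stroke), and this is exactly the quantity for which the H\"older split with exponents $(3/2,3)$ lands on $\int(1+u_\e)^2$, controlled by (\ref{C1C2}). Your handling of the cross term via $\frac{u_\e\nabla u_\e}{1+u_\e}=\nabla\bigl(u_\e-\ln(1+u_\e)\bigr)$, the bound $0\le u_\e-\ln(1+u_\e)\le u_\e$, and (\ref{LapW}) is clean and avoids any smallness condition, as required. So your write-up is not only correct but more self-contained than the paper's, which delegates precisely this step to the literature; the only cosmetic caveats are that only one (not two) additional integration by parts is needed for the cross term, and that the uniform bound on $\int_\Omega[(1+u_{\e,0})\ln(1+u_{\e,0})-u_{\e,0}]$ should be noted as coming from the $L^\infty$ bound on the (fixed) initial data guaranteed by (\ref{icreg}).
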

\begin{proof}
The proof of (\ref{L43}) is based on Lemma \ref{lnu}, the first inequality in (\ref{C1C2}) and the H\"{o}lder inequality, for details we refer the reader to \cite[Lemma 3.6]{Lankeit}.
The proof of (\ref{Lt43}) is equivalent in fact to \cite[Lemma 3.7]{Lankeit} which we present for the reader's convenience.
For any $\varphi \in W^{2,\infty}(\Omega)$ such that $\|\varphi\|_{W^{2,\infty}(\Omega)}\leq 1$ we obtain
\begin{align*}
 &\int_0^T \left|\int_\Omega u_{\e,t}\varphi \right|dxdt\leq d_u\int_0^T \int_\Omega |u_\e \Delta \varphi|dxdt \\
 &+ \chi\int_0^T \int_\Omega|u_\e\nabla w_\e \nabla\varphi| dxdt +\mu_1\int_0^T \int_\Omega u_\e|\varphi| dxdt \\
 &+\mu_1\int_0^T \int_\Omega u_\e^2|\varphi|dxdt 
 + \mu_1a_1\int_0^T\int_\Omega u_\e v_\e|\varphi| dxdt\\
 &\leq T\bar{u}_1(d_u +\mu_1(1+a_1\bar{v}_\infty))+ \left(\mu_1+\frac{\chi_2^2}{2}\right)\int_0^T\int_\Omega u_\e^2dxdt +
 \int_0^T\int_\Omega |\nabla w_\e|^2dxdt
\end{align*}
hence (\ref{Lt43}) follows in view of (\ref{M1}),  (\ref{vwinf})  and the first inequality in  (\ref{C1C2}). The bound (\ref{ln43}) results from multiplying the $u$-equation by $\ln (1+u_\e)$. We omit the proof, since it follows lines from that of \cite[Lemma 3.5]{Lankeit} and makes use of (\ref{LapW}).
\end{proof}
We are in a position to indicate the convergence needed to pass to the limit in the weak formulation.
\begin{theorem}\label{Lw22}
There exists $(u,v,w)$ such that for a subsequence (without change of notation) 
\begin{align} \label{weak2}
u_\e \longrightarrow u \quad &\mbox{weakly  in}\quad L^{2}(0,T:L^{2}(\Omega)) \,,\\ \label{ustr}
u_\e \longrightarrow u \quad &\mbox{a.e. and in}\quad L^{\frac{4}{3}}(0,T:L^{\frac{4}{3}}(\Omega)) \,,\\ \label{uee2}
u_\e^2 \longrightarrow u^2 \quad &\mbox{weakly in}\quad L^{1}(0,T:L^{1}(\Omega)) \,,\\
 \label{vstr}
v_\e \longrightarrow v \quad &\mbox{a.e. and in}\quad L^2(0,T:W^{1,2}(\Omega)) \,,\\
\label{vwekt}
v_{\e,t} \longrightarrow v_t \quad &\mbox{weakly in}\quad L^2(0,T:L^2(\Omega))\,, \\
\label{wstr}
\nabla w_\e \longrightarrow \nabla v \quad &\mbox{strongly in}\quad L^2(0,T:L^{2}(\Omega))\,.
\end{align}
\end{theorem}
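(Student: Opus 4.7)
\textbf{Proof plan for Theorem \ref{Lw22}.} The plan is to apply standard compactness arguments to the $\varepsilon$-uniform bounds accumulated in Sections 2 and 3, handling the three unknowns in turn.

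First, for $v_\varepsilon$: Lemma \ref{base} provides uniform bounds placing $v_\varepsilon$ in $L^\infty(0,T;W^{1,2}(\Omega)) \cap L^2(0,T;W^{2,2}(\Omega))$ (from (\ref{v3}) and (\ref{v4})) and $v_{\varepsilon,t}$ in $L^2(\Omega_T)$ (from (\ref{v5})). Because $W^{2,2}(\Omega)$ embeds compactly in $W^{1,2}(\Omega)$, the Aubin--Lions--Simon lemma yields a subsequence such that $v_\varepsilon \to v$ strongly in $L^2(0,T;W^{1,2}(\Omega))$ and, passing to a further subsequence, a.e.~in $\Omega_T$, which is (\ref{vstr}). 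The bound (\ref{v5}) gives weak compactness of $\{v_{\varepsilon,t}\}$ in $L^2(\Omega_T)$, and the limit is identified as $v_t$ by testing against $C_c^\infty(\Omega_T)$ functions and using the strong convergence of $v_\varepsilon$, giving (\ref{vwekt}).

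Second, for $w_\varepsilon$: integrating inequality (\ref{31-32}) from Lemma \ref{we} over $[0,T]$ and invoking (\ref{v4})--(\ref{v5}) together with the initial regularity (\ref{icreg}) shows that the right-hand side is $O(\varepsilon)$, so
\[
\int_0^T\!\int_\Omega |\nabla(w_\varepsilon - v_\varepsilon)|^2\,dxdt \longrightarrow 0.
\]
Combined with the strong convergence $\nabla v_\varepsilon \to \nabla v$ from (\ref{vstr}), this produces (\ref{wstr}).

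Third, for $u_\varepsilon$: the $L^2$-bound (\ref{C1C2}) yields weak $L^2$ compactness, but to identify the limit I first apply Aubin--Lions with the bounds (\ref{L43}) and (\ref{Lt43}). Since $W^{1,4/3}(\Omega)$ embeds compactly into $L^{4/3}(\Omega)$, which in turn embeds continuously into $(W^{2,\infty}(\Omega))^*$, we obtain $u_\varepsilon \to u$ strongly in $L^{4/3}(\Omega_T)$ and, along a further subsequence, a.e.~in $\Omega_T$, which is (\ref{ustr}). The weak $L^2$-convergence (\ref{weak2}) then follows by extracting a weak $L^2$ limit from the bound (\ref{C1C2}) and identifying it with $u$ via uniqueness of limits.

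The main obstacle is (\ref{uee2}): the $L^2$-bound alone only shows $\{u_\varepsilon^2\}$ is bounded in $L^1(\Omega_T)$, which together with a.e.~convergence is insufficient to pass to the limit. Here the entropy-type estimate (\ref{ln43}) is decisive: the function $\Phi(s):=s\ln(1+\sqrt{s})$ is superlinear at infinity, and
\[
\int_0^T\!\int_\Omega \Phi(u_\varepsilon^2)\,dxdt \;=\; \int_0^T\!\int_\Omega u_\varepsilon^2\ln(1+u_\varepsilon)\,dxdt \;\leq\; \tilde C_3(T),
\]
so the de la Vall\'ee--Poussin criterion ensures equi-integrability of $\{u_\varepsilon^2\}$ in $L^1(\Omega_T)$. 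Together with the a.e.~convergence from (\ref{ustr}), Vitali's convergence theorem yields $u_\varepsilon^2 \to u^2$ strongly in $L^1(\Omega_T)$, which implies (\ref{uee2}).
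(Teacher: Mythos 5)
Your proposal is correct, and for most items ((\ref{weak2}), (\ref{ustr}), (\ref{vwekt}), (\ref{wstr}), and the Aubin--Lions treatment of $u_\e$ via (\ref{L43})--(\ref{Lt43})) it coincides with the paper's argument. The one genuinely different step is (\ref{vstr}): you obtain strong convergence of $\nabla v_\e$ by applying Aubin--Lions one regularity level higher, using (\ref{v4}) (upgraded to an $L^2(0,T;W^{2,2}(\Omega))$ bound via elliptic regularity for the homogeneous Neumann problem --- a step you should state explicitly, since (\ref{v4}) only controls $\Delta v_\e$) together with the compact embedding $W^{2,2}(\Omega)\hookrightarrow\hookrightarrow W^{1,2}(\Omega)$ and the $L^2$ bound (\ref{v5}) on $v_{\e,t}$. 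The paper instead first extracts strong $L^2(\Omega_T)$ convergence of $v_\e$ by the basic Aubin--Lions lemma and then proves that $\{\nabla v_{\e_j}\}$ is Cauchy in $L^2(\Omega_T)$ by testing the equation satisfied by the difference $v_{\e_j}-v_{\e_k}$ with that difference and exploiting the strong $L^{4/3}$ convergence of $u_\e$; this avoids any appeal to $W^{2,2}$ elliptic regularity but is structurally more involved. Your route is shorter and self-contained given the a priori estimates; the paper's is more elementary in the tools it invokes. A second, minor divergence is in (\ref{uee2}): you push through de la Vall\'ee--Poussin plus Vitali to get \emph{strong} $L^1(\Omega_T)$ convergence of $u_\e^2$, whereas the paper stops at equi-integrability plus Dunford--Pettis for weak $L^1$ convergence; your conclusion is slightly stronger and makes explicit the role of the a.e.\ convergence in identifying the limit as $u^2$, which the paper leaves implicit. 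Both versions suffice to pass to the limit in the weak formulation.
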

\begin{proof}
The weak convergence (\ref{weak2}) follows from the first inequality in (\ref{C1C2}) and in view of (\ref{L43}) and (\ref{Lt43}), (\ref{ustr}) follows from the Aubin-Lions compactness lemma. The condition (\ref{ln43}) implies the equiintegrability of $\{u_\e^2\}$ in $\Omega_T$ which, in turn, produces its weak convergence in $L^1(\Omega_T)$, i.e. (\ref{uee2}). 
Next, again the Aubin-Lions lemma enables us to deduce from (\ref{v2})-(\ref{v3}) and (\ref{v5}) that  for a subsequence still denoted by $\e_j$
\begin{equation}\label{v2str}
v_{\e_j}\longrightarrow v\quad\mbox{in}\quad L^2(0,T:L^2(\Omega))\quad\mbox{as}\quad \e_j\searrow 0\,.
\end{equation}
Next, in order to obtain information on the convergence of gradients, let us denote the differences $V_{\varepsilon} = v_{\e_j}- v_{\e_k}$ and $U_{\varepsilon} = u_{\e_j}- u_{\e_k}$. With this notation, from the $v$-equation, we obtain: 
\begin{equation}\label{Vru1}
V_{\epsilon t} -   \Delta V_{\varepsilon}
=\mu_2  V_{\epsilon} (1- a_2 u_{\e_j} -v_{\e_j}) - \mu_2  v_{\e_k}( a_2U_{\e} + V_{\e}   ).
\end{equation}
On multiplying  this equation by $V_{\e}$ and then  integrating  by parts, we find, using  the non-negativity of solutions,  the H\"{o}lder inequality  and (\ref{vwinf}) 
\begin{align*}
    d_v\int_0^T\int_\Omega |\nabla V_\e|^2dxdt &\leq  \mu_2\int_0^T\int_\Omega |V_\e|^2dxdt +\mu_2a_2\int_0^T\int_\Omega v_{\e_k} |U_\e||V_\e|dxdt \\
    &\leq \mu_2\int_0^T\int_\Omega |V_\e|^2dxdt +\mu_2a_2\|v_{\e_k}\|_\infty\|U_\e\|_{L^{\frac{4}{3}}(0,T:L^{\frac{4}{3}}(\Omega))}\|V_\e\|_{L^4(0,T:L^4(\Omega))}\\
    &\leq \mu_2\int_0^T\int_\Omega |V_\e|^2dxdt +2\mu_2a_2\bar{v}_\infty^2\|_\infty\|U_\e\|_{L^{\frac{4}{3}}(0,T:L^{\frac{4}{3}}(\Omega))}.
\end{align*}
Notice that $\int_\Omega |V_\e(x,0)| dx=0$. It follows that $\{\nabla v_{\e_j}\}$ is a Cauchy sequence in $L^2(0,T:L^2(\Omega))$ provided 
\[v_{\e_j}\longrightarrow v\quad\mbox{in}\quad L^2(0,T:L^2(\Omega))\quad\mbox{and}\quad  u_{\e_j}\longrightarrow u\quad\mbox{in}\quad L^{\frac{4}{3}}(0,T:L^\frac{4}{3}(\Omega)). 
\]
Thus,
\begin{equation}
\nabla v_{\e_j}\longrightarrow \nabla v\quad\mbox{in}\quad L^2(0,T:L^2(\Omega)).
\end{equation}
Hence, (\ref{vstr}) follows in view of (\ref{v2str}).
The weak convergence in (\ref{vwekt}) is an immediate result of the uniform bound (\ref{v5}). 
Finally, to prove (\ref{wstr}) one notices first that after integration of (\ref{31-32}) from $0$ to $T$ in view of (\ref{v4}) and (\ref{v5}) we obtain
\[\int_0^T\int_\Omega |\nabla w_{\e_j}-\nabla v_{\e_j}|^2\longrightarrow 0 \quad\mbox{as}\quad {\e_j}\longrightarrow 0\]
and it remains to use the triangle inequality
\begin{align*}
\left(\int_0^T\int_\Omega |\nabla w_{\e_j}-\nabla v|^2dxdt\right)^{\frac{1}{2}}\leq& \left(\int_0^T\int_\Omega |\nabla w_{\e_j}-\nabla v_{\e_j}|^2 dxdt\right)^{\frac{1}{2}}\\   
&+ \left(\int_0^T\int_\Omega |\nabla v_{\e_j}-\nabla v|^2 dxdt\right)^{\frac{1}{2}}
\end{align*}
and (\ref{vstr}).
\end{proof}

Theorem \ref{Lw22} allows passing to the limit, $\lim_{j\rightarrow\infty}\e_j=0$, in all terms in the following weak formulation 
 \begin{align*}
  - \int_{0}^T \int_{\Omega} u_{\e_j} \varphi_t dxdt &-\int_\Omega u_0\varphi(x,0) dx
=   d_u \int_{0}^T \int_{\Omega}u_{\e_j} \Delta \varphi dxdt  
-\chi \int_{0}^T \int_{\Omega} u_{\e_j} \nabla w_{\e_j} \nabla \varphi dxdt  \\
&+\mu_1\int_{0}^T \int_{\Omega} u_{\e_j} ( 1- u_{\e_j} - a_1 v_{\e_j}) \varphi dxdt\\
  \int_{0}^T \int_{\Omega} v_{\e_j,t} \psi dxdt
&=  d_v \int_{0}^T \int_{\Omega} \nabla v_{\e_j} \nabla \psi  dxdt 
+  \mu_2\int_{0}^T \int_{\Omega}v_{\e_j}(1- v_{\e_j} - a_2 u_{\e_j}) \psi dxdt
    \end{align*}
for $\varphi \in C^\infty_0(\bar{\Omega}\times [0\,,T))$ and $\psi\in L^2(0,T: W^{1,2}(\Omega))$ where $C^\infty_0(\bar{\Omega}\times [0\,,T))$ is the space of smooth functions with compact support in $\bar{\Omega}\times [0\,,T) $.
 It is worth noticing that thanks to the $L^\infty(\Omega)$-estimates of $u_\e$ from Lemma \ref{Cor2} we have a stronger result for case $N\leq 2$ and the already established estimates allow us to use the standard compactness technique in a straightforward way.
\begin{theorem} \label{lnu2} If  $N \leq2$ and (\ref{icreg})  with $T<\infty$ are satisfied then 
\begin{align}\label{lnn2} 
&\int_0^T \int_{\Omega}  u_\e^2 +\int_0^T \int_{\Omega} |\nabla u_\e|^2 \leq C \\
\label{lnt2} 
&\{u_{\e,t} \} \quad\mbox{bdd. in } \quad L^2(0,T:W^{1,2}(\Omega)^\star) \,.
\end{align}
Moreover, upon extracting a subsequence (without the change of the notation)
\begin{align}\label{ustr2}
u_\e \longrightarrow u \quad &\mbox{strongly in}\quad L^2(0,T:L^{2}(\Omega))\,, \\   \label{uw22}
u_{\e}^2 \longrightarrow u^2 \quad &\mbox{weakly in}\quad L^2(0,T:L^2(\Omega))\,,\\
\label{uwekt}
u_{\e,t} \longrightarrow u_t \quad &\mbox{weakly in}\quad L^2(0,T:W^{1,2}(\Omega)^\star)\,. 
\end{align}
\end{theorem}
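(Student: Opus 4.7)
The key new ingredient available for $N\leq 2$ that was unavailable in the previous subsection is the uniform bound $\sup_{\e,t}\|u_\e(\cdot,t)\|_\infty\leq\bar u_\infty$ from Lemma \ref{Cor2}, together with the uniform $L^\infty(0,T;L^2(\Omega))$ control of $\nabla w_\e$ from (\ref{wD}). The plan is to replace the logarithmic testing used in the proof of Lemma \ref{lnu} by the standard $L^2$-energy testing. Multiplying the $u$-equation by $u_\e$ and integrating by parts yields
\[
\tfrac{1}{2}\tfrac{d}{dt}\int_\Omega u_\e^2 + d_u\int_\Omega|\nabla u_\e|^2 = \chi\int_\Omega u_\e\,\nabla w_\e\cdot\nabla u_\e + \mu_1\int_\Omega u_\e^2(1-u_\e-a_1 v_\e).
\]
Young's inequality gives $\chi u_\e\nabla w_\e\cdot\nabla u_\e \leq \tfrac{d_u}{2}|\nabla u_\e|^2 + \tfrac{\chi^2\bar u_\infty^2}{2d_u}|\nabla w_\e|^2$, and the reaction term is bounded pointwise by $\mu_1\bar u_\infty^2(1+\bar u_\infty+a_1\bar v_\infty)$. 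After absorbing and using (\ref{wD}), I obtain a differential inequality of the form $\tfrac{d}{dt}\int u_\e^2 + d_u\int|\nabla u_\e|^2\leq C$, and integration on $[0,T]$ together with the initial bound from (\ref{icreg}) delivers (\ref{lnn2}).

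For (\ref{lnt2}) the approach is duality. For any $\varphi\in W^{1,2}(\Omega)$ I expand
\[
\langle u_{\e,t},\varphi\rangle = -d_u\!\int_\Omega\!\nabla u_\e\cdot\nabla\varphi - \chi\!\int_\Omega u_\e\nabla w_\e\cdot\nabla\varphi + \mu_1\!\int_\Omega u_\e(1-u_\e-a_1 v_\e)\varphi.
\]
Each term is bounded by Cauchy--Schwarz using the $L^\infty$-bounds (\ref{u12}) and (\ref{vwinf}), the $L^\infty(0,T;L^2(\Omega))$ bound on $\nabla w_\e$, and the control of $\nabla u_\e$ from (\ref{lnn2}). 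This yields $\|u_{\e,t}(\cdot,t)\|_{W^{1,2}(\Omega)^\star}\leq C\bigl(1+\|\nabla u_\e(\cdot,t)\|_2\bigr)$; squaring and integrating over $[0,T]$ and using (\ref{lnn2}) delivers (\ref{lnt2}).

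Given (\ref{lnn2}) and (\ref{lnt2}), the three convergences follow from standard compactness. The Aubin--Lions lemma applied to the triple $W^{1,2}(\Omega)\hookrightarrow\hookrightarrow L^2(\Omega)\hookrightarrow W^{1,2}(\Omega)^\star$ yields a subsequence for which (\ref{ustr2}) holds; a further extraction produces $u_\e\to u$ almost everywhere in $\Omega_T$. Combining this pointwise convergence with the uniform bound $\|u_\e^2\|_\infty\leq\bar u_\infty^2$ and dominated convergence, $u_\e^2\to u^2$ strongly in $L^p(\Omega_T)$ for every $p<\infty$, which gives (\ref{uw22}) in particular. Finally, (\ref{uwekt}) follows from (\ref{lnt2}) and the Banach--Alaoglu theorem, the limit being identified as $u_t$ in the distributional sense using (\ref{ustr2}). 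The main obstacle is essentially absent at this stage: all the hard work (the $L^\infty$-bound on $u_\e$ coming from the Moser--Alikakos style bootstrap of Lemma \ref{Cor2}, which is dimension-limited to $N\leq 2$) has already been done, and what remains is a direct energy estimate plus routine compactness.
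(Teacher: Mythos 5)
Your proof is correct and follows essentially the same route as the paper: testing the $u$-equation with $u_\e$, using the $L^\infty$-bound of Lemma \ref{Cor2} and the gradient bound (\ref{wD}) to absorb the taxis term, a duality estimate for $u_{\e,t}$, and then Aubin--Lions plus the $L^\infty$-bound for the convergences (\ref{ustr2})--(\ref{uwekt}). Your treatment of (\ref{uw22}) via a.e.\ convergence and dominated convergence in fact yields a slightly stronger (strong $L^p$) conclusion than the weak convergence stated, which is harmless.
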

\begin{proof}
Lemma \ref{Cor2} and (\ref{wD})  allow us to test the u-equation with $u_\e$ to obtain the standard $L^2$ energy estimate. Finally,  (\ref{lnt2}) follows in a standard way using (\ref{lnn2}) and (\ref{ukk}).
By (\ref{lnn2}) and (\ref{lnt2} and the Aubin-Lions compactness lemma, we infer that for a subsequence there holds (\ref{ustr2}). The weak convergence in (\ref{uw22}) results from 
the $L^\infty$-bound in Lemma \ref{Cor2} and  (\ref{ustr2}), while (\ref{uwekt}) is an immediate consequence of the uniform bounds (\ref{v5}) and (\ref{lnt2}). 
\end{proof}
In consequence, for $N\leq 2$ there exists a subsequence of $\{\e_j\}_{j\in \N}$   such that it is possible passing to the limit in the following $L^2$-weak formulation of the system 
\begin{align*}
  \int_{0}^T \int_{\Omega} u_{\e,t} \varphi dxdt
&=   d_u \int_{0}^T \int_{\Omega}\nabla u_{\e} \nabla \varphi dxdt  
-\chi \int_{0}^T \int_{\Omega} u_{\e} \nabla w_{\e} \nabla \varphi dxdt  \\
&+\mu_1\int_{0}^T \int_{\Omega} u_{\e} ( 1- u_{\e} - a_1 v_{\e}) \varphi dxdt\\
  \int_{0}^T \int_{\Omega} v_{\e,t} \psi dxdt
&=  d_v \int_{0}^T \int_{\Omega} \nabla v_{\e} \nabla \psi  dxdt 
+  \mu_2\int_{0}^T \int_{\Omega}v_{\e}(1- v_{\e} - a_2 u_{\e}) \psi dxdt
    \end{align*}
for $\varphi\,,\psi\in L^2(0,T: W^{1,2}(\Omega))$
to get in the limit that $(u\,,v)$ satisfies (\ref{uw1})-(\ref{vw2}).
 We underline that, so far, the pair $(u,v)$ is merely a weak $L^2$ solution to the system. 
It is worth mentioning that the compatibility condition (\ref{ic}) is not needed to use the compactness method, and the above passage to the limit for $N\leq 2$ is free of this restriction. On the other hand, suitable assumptions on the regularity of the initial data and the classical regularity theory of parabolic systems lead to the alternative proof of classical solutions to the competition problem with repulsive competitor taxis (\ref{uu1})-(\ref{uu2}).

\begin{corollary} \label{lnu2cor} As a consequence of Theorem
\ref{lnu2} and uniqueness of the solution of the limit problem, cf. Lemma \ref{uniq}, we infer that, in fact, the full sequence converges to the same limit.
\end{corollary}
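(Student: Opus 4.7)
The plan is to exploit the standard subsequence principle: if every subsequence of $\{\e_j\}$ admits a further subsequence along which $(u_\e,v_\e)$ converges in the senses of Theorem~\ref{lnu2} to one and the same limit, then the full sequence must converge to that limit. Thus the proof reduces to showing that (i) every subsequence produces, via the compactness argument, a limit $(u,v)$ that solves the weak formulation of (\ref{uu1})--(\ref{uu2}), and (ii) this limit lies in the uniqueness class $\mathcal{Y}$ of Theorem~\ref{uniq}, so it is independent of the subsequence chosen.

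For step~(i) I would start with an arbitrary subsequence of $\{\e_j\}_{j\in\N}$; the a priori bounds from Sections~2 and~3 together with the Aubin--Lions argument used in the proof of Theorem~\ref{lnu2} give a further subsequence along which (\ref{ustr2})--(\ref{uwekt}) hold, together with the $v_\e$-convergences (\ref{vstr})--(\ref{vwekt}) and the crucial strong gradient convergence $\nabla w_\e\to \nabla v$ in $L^2(\Omega_T)$ from (\ref{wstr}). Passing to the limit in the $L^2$-weak formulation displayed just before Corollary~\ref{lnu2cor} (the only nonlinear term requiring care being $u_\e\nabla w_\e$, which converges in $L^1(\Omega_T)$ thanks to the $L^\infty$-bound on $u_\e$ and the strong $L^2$-convergence of $\nabla w_\e$) yields that the limit $(u,v)$ satisfies (\ref{uw1})--(\ref{vw2}).

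For step~(ii) I need to check that $(u,v)\in\mathcal{Y}$. The $L^\infty(\Omega_T)$-bound for both components follows from Lemma~\ref{Cor2} and (\ref{vwinf}), membership in $L^2(0,T:W^{1,2}(\Omega))$ comes from (\ref{lnn2}) and (\ref{v3}), and the time-derivative estimate in $L^2(0,T:L^2(\Omega))$ is provided by (\ref{v5}) for $v$ and by the weak convergence (\ref{uwekt}) together with the usual lower-semicontinuity of norms for $u_t$; in fact in view of the stronger weak convergence $u_{\e,t}\rightharpoonup u_t$ in $L^2(0,T:W^{1,2}(\Omega)^\star)$ one should test the equation against $u$ itself (or appeal to the estimate derived from (\ref{Ue})) to upgrade to $u_t\in L^2(0,T:L^2(\Omega))$. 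Finally, the estimate $\nabla v\in L^\infty(0,T:L^4(\Omega))$ in $\mathcal{Y}$ passes to the limit from (\ref{K4}) by weak-$\star$ lower semicontinuity.

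Once $(u,v)\in\mathcal{Y}$ is established, Theorem~\ref{uniq} forces the limit to coincide with the unique weak solution starting from $(u_0,v_0)$, independently of the extracted subsequence. The usual contradiction argument (if the full sequence did not converge, one could extract a subsequence bounded away from the limit in some relevant norm, then re-extract a convergent sub-subsequence, contradicting uniqueness) then delivers convergence of the entire sequence $\{(u_{\e_j},v_{\e_j})\}$ to $(u,v)$. I expect the only mildly delicate point to be verifying that the limit really belongs to $\mathcal{Y}$, in particular the $L^\infty(0,T:L^4(\Omega))$-control of $\nabla v$, but this is inherited directly from the uniform bound (\ref{K4}) so no new estimate is required.
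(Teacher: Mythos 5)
Your proposal is correct and follows exactly the route the paper intends: the corollary is stated without any written proof, relying precisely on the subsequence principle combined with the uniqueness result of Theorem \ref{uniq} that you spell out. Your additional verification that the limit lies in the class $\mathcal{Y}$ is in fact more careful than anything the paper provides; the only imprecise sub-step is the suggestion that testing the $u$-equation against $u$ itself upgrades $u_t$ to $L^2(0,T:L^2(\Omega))$ (that test yields energy bounds, not a bound on $u_t$), but the alternative you mention, or the maximal-regularity bootstrap used in Theorem \ref{reg}, closes this point without changing the structure of the argument.
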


\begin{theorem}\label{reg} 
If $u_0\,, v_0\in C^{2+ \alpha}(\bar{\Omega})$ and $N\leq 2$, then the solution to (\ref{uw1})-(\ref{vw2}) satisfies
\[(u,v)\in (C^{2+\alpha,1+\frac{\alpha}{2}}(\bar{\Omega}\times[0\,,T))^2\].
\end{theorem}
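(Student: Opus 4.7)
\textbf{Proof proposal for Theorem \ref{reg}.} The plan is to run a standard parabolic bootstrap on the pair $(u,v)$ produced by the fast-reaction limit, making use of the $L^\infty$-bounds already obtained and of uniqueness (Theorem \ref{uniq}) so that the bootstrapped classical solution must coincide with the weak limit.

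\emph{Step 1: Initial regularity of $v$.} We know from Lemma \ref{Cor2} and \eqref{uvM} that $u,v\in L^\infty(\Omega_T)$, so the reaction term $\mu_2 v(1-v-a_2u)$ in the $v$-equation lies in $L^\infty(\Omega_T)$. Since $v_0\in C^{2+\alpha}(\bar\Omega)$ is compatible with the Neumann boundary condition, parabolic $L^p$-maximal regularity applied to (\ref{vw2}) gives $v\in W^{2,1}_p(\Omega_T)$ for every finite $p$. For $N\leq 2$ and $p$ large the anisotropic embedding $W^{2,1}_p\hookrightarrow C^{1+\beta,(1+\beta)/2}$ yields in particular
$$\nabla v\in L^\infty(0,T;C^\beta(\bar\Omega))\qquad\text{for some }\beta\in(0,1),$$
together with $\Delta v\in L^p(\Omega_T)$.

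\emph{Step 2: Hölder regularity of $u$.} Rewrite the $u$-equation in non-divergence form,
$$u_t-d_u\Delta u-\chi\nabla v\cdot\nabla u=\chi u\,\Delta v+\mu_1 u(1-u-a_1v).$$
By Step 1 the drift coefficient $\chi\nabla v$ is bounded and the right-hand side belongs to $L^p(\Omega_T)$ for every $p<\infty$, while $u_0\in C^{2+\alpha}(\bar\Omega)$ is an admissible initial datum. Maximal $L^p$-regularity for the linear parabolic problem with Neumann boundary conditions therefore gives $u\in W^{2,1}_p(\Omega_T)$ for arbitrarily large $p$, and again by Sobolev embedding
$$u\in C^{\alpha,\alpha/2}(\bar\Omega\times[0,T])$$
(in fact with Hölder exponent as close to $1$ as we wish, but $\alpha$ suffices).

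\emph{Step 3: Schauder bootstrap.} With $u\in C^{\alpha,\alpha/2}$ and $v\in C^{\alpha,\alpha/2}$, the right-hand side $\mu_2 v(1-v-a_2u)$ of the $v$-equation is $C^{\alpha,\alpha/2}$. Schauder theory for parabolic equations with Neumann boundary conditions on smooth domains (see e.g.\ Ladyzhenskaya--Solonnikov--Ural'tseva) then gives
$$v\in C^{2+\alpha,1+\alpha/2}(\bar\Omega\times[0,T]),$$
since $v_0\in C^{2+\alpha}(\bar\Omega)$ satisfies the compatibility condition. In particular $\nabla v\in C^{\alpha,\alpha/2}$ and $\Delta v\in C^{\alpha,\alpha/2}$. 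Plugging this back into the non-divergence form of the $u$-equation gives a linear parabolic problem whose coefficients and right-hand side are all $C^{\alpha,\alpha/2}$, so one more application of Schauder yields $u\in C^{2+\alpha,1+\alpha/2}(\bar\Omega\times[0,T])$.

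\emph{Consistency with the weak limit and main obstacle.} Theorem \ref{uniq} guarantees that the classical solution constructed by the bootstrap coincides with the $L^2$-weak limit obtained in Theorem \ref{lnu2}, so no separate identification argument is needed. The one technical point to watch is the boundary regularity in the $L^p$-maximal regularity steps: one must verify the usual compatibility of $u_0$ and $v_0$ with the homogeneous Neumann condition at $t=0$ and use the up-to-the-boundary versions of the estimates (smoothness of $\partial\Omega$ is assumed in the paper). This is standard, but it is the step where care is required; the rest of the proof is a routine two-step bootstrap of parabolic regularity, made possible by the $L^\infty$-bounds already established.
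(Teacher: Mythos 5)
Your proposal is correct and follows essentially the same route as the paper: $L^p$-maximal regularity for the $v$-equation first (using the $L^\infty$-bounds on $u,v$), then linear parabolic regularity for the $u$-equation, and finally the classical Schauder bootstrap up to $C^{2+\alpha,1+\frac{\alpha}{2}}$. The only cosmetic difference is that you absorb $\chi\nabla v\cdot\nabla u$ into the operator as a bounded drift and reach $W^{2,1}_p$ for all finite $p$ in one step, whereas the paper keeps this term on the right-hand side (so it is only in $L^2$ at first, since a priori $\nabla u\in L^2$) and passes through the $L^2$-level before bootstrapping; both variants lead to the same conclusion.
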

\begin{proof}
First, due to the fact that $v(1- v - a_2 u)\in L^p(0,T:L^p(\Omega))$  for any $p\in [1\,,\infty]$ we infer by the $L^p$- maximal regularity (see for instance Theorem 48.2 and Remark 48.3 (ii) \cite{qs} p. 439) that, in fact, $v$ is a strong solution to the $v$-equation
\[v\in W^{1,p}(0,T:L^p(\Omega))\cap L^p(0\,,T:W^{2,p}(\Omega))\,\]
and also   $\nabla v\in L^\infty(0,T:L^\infty(\Omega))$, (see (\ref{zip})) 
and therefore $$ - \chi \nabla u \nabla v - \chi u  \Delta v\in L^{q}( 0, T: L^q(\Omega)) \mbox{ for  } q \leq 2  $$
 therefore, 
$$ u_t -\Delta u \in L^2( 0, T: L^2( \Omega)),$$
which implies $$u \in  W^{1,2}(0,T:L^2(\Omega))\cap L^2(0\,,T:W^{2,2}(\Omega))$$ and it allows us to rise the regularity of the weak solution in the classical way \cite[Chapter VII]{Lady} (see also   Remark 48.3 (ii) \cite{qs} p. 439).
\end{proof}
\section{Prey-predator  system with prey-taxis} \label{sec5}
In this section we use some tools developed in the previous sections to show that  the solution to the following prey-predator system with indirect prey taxis  studied in \cite{Ahnand} and \cite{Ahnand2} being the extension of model from \cite{TWr1}
\begin{align} \label{ue1z}
   z_{\e,t} - d_z\Delta z_\e &= -\chi \nabla \cdot (z_\e \nabla w_\e ) +\mu_1 z_\e -\mu_1'z_\e^2 + bF(v_\e)z_\e\,,\\ \label{ue2v}
   v_{\e,t} - d_v\Delta v_\e &= \mu_2 v_\e(1 -v_\e) -F(v_\e)z_\e\,,\\ \label{ue3w}
  \e w_{\e,t} - \e \Delta w_\e &=  v_\e- w_\e\,,
 \end{align}
converges to the solution of prey-predator model with prey-taxis studied in many papers among which we indicate \cite{JinWang} with references given there
 \begin{align} \label{uu1z}
   z_{t} - d_z\Delta z &= -\chi \nabla \cdot (z \nabla v) +\mu_1z -\mu_1'z^2+ bF(v)z\,,\\ \label{uu2v}
   v_{t} - d_v\Delta v &= \mu_2 v(1 -v)-F(v)z\,
 \end{align}
with suitably regular initial conditions and homogeneous Neumann boundary conditions  for both systems. 

The growth rate coefficient $\mu_1$ in most predator-prey models is reduced to the death rate $\mu_1<0$ since the birth rate is usually due only to prey consumption, however, we only assume that $\mu_1\in\R$ along with the positivity of the intraspecific competition coefficient $\mu_1'>0$. The last assumption is crucial for our analysis, though in basic predator-prey models, it is assumed that $\mu_1<0$ and $\mu_1'=0$.
The function $F:\R_+\rightarrow \R_+$, assumed to be smooth, is usually referred to as the functional response and it satisfies the  assumption: 
\begin{equation}\label{Funct}
F(\xi)\leq C_F\xi, \quad\mbox{for}\quad \xi\geq 0, \; \;  \mbox{ and }  |F(\xi_1) - F( \xi_2) | \leq L |\xi_1 - \xi_2| , \; \; \; 
\mbox{ for }\xi_1, \xi_2  \geq 0
\end{equation}
with  a positive constant $C_F$ and Lipschitz constant $L$. We note that condition (\ref{Funct}) is satisfied by most biologically relevant functional responses, including three types of Holling function (cf. \cite{Hol}).  The constant $b$ is the efficiency with which the consumed prey is converted into predator offspring.

It turns out that the passage from (\ref{ue1z})-(\ref{ue3w}) to (\ref{uu1z})-(\ref{uu2v}) demands only small modifications of the arguments used in the previous sections for the case of competition model.  It is worth noticing that the existence of global classical solutions to the system (\ref{uu1z})-(\ref{uu2v}) (known so far only for $N\leq 2$) is much more difficult to prove in comparison to the system (\ref{ue1z})-(\ref{ue3w}) (see \cite{Ahnand}, \cite{Ahnand2} and \cite{JinWang}).
 
 The uniform $L^1(\Omega)$-bound in (\ref{M1}) for the first two unknown still holds, but requires a different argument.  
\begin{lemma} \label{L11}
 We assume that the initial conditions  $$z_{\e,0}\,,v_{\e,0}\,,w_{\e,0}\in W^{1,q} (\Omega)\,, q>\max\{N\,,4\}$$ are nonnegative functions.   Then,  for any $\e\in (0,1)$ there exists a global in time unique  classical solution  to (\ref{ue1z})-(\ref{ue3w})  defined in $\Omega\times (0\,,+\infty)$ such that for any $T>0$ 
\[(u_\e,v_\e,w_\e)\in (C([0\,,T]:W^{1,q}(\Omega))\cap 
C^{2,1}(\bar{\Omega}\times (0\,,T)))^3\,.\] 
Moreover, there exist constants $\bar{z}_1\,,\bar{v}_1\,,\bar{w}_1$, $C_1(T)$,   $C_2(T)$, $\bar{v}_\infty$ and $\bar{w}_\infty$ such that,  for any $\e\in (0,1)$
\begin{equation}\label{M11}
\sup_{t>0}\left(\Vert z_\e(\cdot,t)\Vert_{1}\right)\leq \bar{z}_1\,,\quad 
\sup_{t>0}\left(\Vert v_\e(t)\Vert_{1}\right)\leq \bar{v}_1\,,\quad \sup_{t>0}\left(\Vert w_\e(t)\Vert_{1}\right)\leq \bar{w}_1, 
\end{equation}
\begin{equation}\label{vwinf1}
\sup_{t>0}\left(\Vert v_\e(t)\Vert_{\infty}\right)\leq \bar{v}_\infty\,,\quad \sup_{t>0}\left(\Vert w_\e(t)\Vert_{\infty}\right)\leq \bar{w}_\infty\,, 
\end{equation}
\begin{equation}\label{C1C21}
   \int_{0}^T\int_\Omega z_\e(x,s)^2dxds\leq  C_1(T)\,,\quad \int_{0}^T\int_\Omega v_\e(x,s)^2dxds\leq  C_2(T)\,
\end{equation} 	
and 
\begin{equation}\label{t11}
    \int_{t}^{t+1}\int_\Omega z_\e(x,s)^2dxds\leq  (\mu_1+C_F\bar{v}_\infty)\bar{z}_1\,,\; \int_{t}^{t+1}\int_\Omega v_\e(x,s)^2dxds\leq \mu_2\bar{v}_1\,
\end{equation} for any  $t>0$.
\end{lemma}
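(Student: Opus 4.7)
The proof follows Lemma \ref{L1} closely: local-in-time existence in the claimed regularity class is provided by Amann's theory \cite[Theorems 14.4 \& 14.6]{Am93}, while global existence in the classical sense for this precise indirect prey-taxis model is established in \cite{Ahnand,Ahnand2} (see also \cite{JinWang}). The only genuine novelty relative to the competition case is the $L^1$ bound on $z_\e$, because the predator kinetics contain the \emph{production} term $+bF(v_\e)z_\e$, so integration of the $z$-equation alone does not close.

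For $v_\e$, since $F(v_\e)z_\e\ge 0$ one has $v_{\e,t}-d_v\Delta v_\e\le \mu_2 v_\e(1-v_\e)$, and comparison with the logistic ODE gives both the $L^\infty$ bound $\|v_\e(\cdot,t)\|_\infty\le\bar v_\infty:=\max\{1,\|v_{\e,0}\|_\infty\}$ and, after integration in $x$ and use of Jensen's inequality, a closed logistic inequality for $\|v_\e(\cdot,t)\|_1$, hence the first half of (\ref{M11}). The estimates on $w_\e$ are obtained verbatim as in Lemma \ref{L1}: integrating the $w$-equation in $x$ gives the $L^1$ bound in terms of $\bar v_1$, while comparison with the ODE $\xi_t+\frac{1}{\e}(\xi-\bar v_\infty)=0$ gives the $L^\infty$ bound $\bar w_\infty$, yielding (\ref{vwinf1}).

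The essential step is the $L^1$ control of $z_\e$, and this is where the main obstacle lies. Multiplying the $v$-equation by $b$ and adding it to the $z$-equation annihilates the $\pm bF(v_\e)z_\e$ terms; integrating over $\Omega$ (the Neumann condition kills both the diffusive and chemotactic flux) yields
\[
\frac{d}{dt}\int_\Omega (z_\e+bv_\e)\,dx+\mu_1'\int_\Omega z_\e^2\,dx\le \mu_1\int_\Omega z_\e\,dx+b\mu_2\int_\Omega v_\e(1-v_\e)\,dx.
\]
Using $\int_\Omega z_\e^2\ge |\Omega|^{-1}\|z_\e\|_1^2$ and the already-obtained bound $\|v_\e\|_1\le\bar v_1$ produces a logistic-type differential inequality for $Y(t):=\|z_\e(t)\|_1+b\|v_\e(t)\|_1$, from which uniform boundedness of $Y$ (and hence of $\|z_\e\|_1$) follows. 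This completes (\ref{M11}).

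Finally, the space-time $L^2$ bounds are by-products of the same computation. Integrating the displayed inequality from $0$ to $T$ gives the bound on $\int_0^T\!\int_\Omega z_\e^2$ in (\ref{C1C21}), while integration from $t$ to $t+1$ together with the already-established uniform-in-time $L^1$ bounds yields the local-in-time bound in (\ref{t11}). The corresponding estimates for $v_\e$ come from rewriting the $v$-equation as $\mu_2 v_\e^2=\mu_2 v_\e-v_{\e,t}+d_v\Delta v_\e-F(v_\e)z_\e$, integrating in $x$ and then over the respective time interval, and dropping the nonnegative term $F(v_\e)z_\e$. Throughout, the constants are uniform in $\e\in(0,1)$ because $\e$ enters only through the $w$-equation, whose contribution to the arguments above is purely through the already-$\e$-independent bounds $\bar v_1,\bar v_\infty$.
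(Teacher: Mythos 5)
Your proposal is correct and follows essentially the same route as the paper: the decisive step in both is to multiply the $v$-equation by $b$ and add it to the $z$-equation so that the $\pm bF(v_\e)z_\e$ terms cancel, after which Jensen's inequality turns the $-\mu_1'\int_\Omega z_\e^2$ term into quadratic damping for $\|z_\e\|_1$ and yields the uniform $L^1$ bound, with the remaining estimates (logistic comparison for $\|v_\e\|_\infty$, ODE comparison for $w_\e$, and time-integration of the same identities for the space-time $L^2$ bounds) carried over from Lemma \ref{L1} exactly as you describe. The only cosmetic difference is that you feed the previously obtained bound $\|v_\e\|_1\le\bar v_1$ into the combined inequality, whereas the paper keeps $-\mu_2\int_\Omega v_\e^2$ and applies Jensen and Young to both species simultaneously; this changes nothing of substance.
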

\begin{proof}
To prove (\ref{M11}) we multiply the $v$-equation by $b$, integrate in $\Omega$ and sum up the first two equations to obtain 
\begin{align*}
 &\frac{d}{dt}\left(\int_\Omega z_\e + b\int_\Omega v_\e\right) +\left(\int_\Omega z_\e + b\int_\Omega v_\e\right) \\
 &\leq (\mu_1+1)_+\int_\Omega  z_\e -\mu_1'\int_\Omega z_e^2 + b(\mu_2 +1)\int_\Omega  v_\e -\mu_2\int_\Omega v_e^2   
\end{align*}
and using Jensen's inequality we obtain
\begin{align*}
& \frac{d}{dt}\left(\int_\Omega z + b\int_\Omega v\right) +\left(\int_\Omega z + b\int_\Omega v\right) \leq\\
& (\mu_1+1)_+\int_\Omega  z_\e  -\frac{\mu_1'}{|\Omega|}\left(\int_\Omega z_\e \right)^2 +b(\mu_2 +1)\int_\Omega  v_\e  -\frac{b\mu_2}{|\Omega|}\left(\int_\Omega v_\e \right)^2 \\
&\leq \frac{|\Omega|}{4}\left(\frac{(\mu_1+1)_+^2}{\mu_1'}+\frac{b^2(\mu_2+1)^2}{\mu_2}\right).
\end{align*}
Hence (\ref{M11}) easily follows from the differential inequality while the proofs of the remaining results are the same as that in Lemma \ref{L1}. \end{proof}
Notice that the terms $-a_2 \mu_2av_\e u_\e$ and $-F(v_\e) z_\e$ in (\ref{ue2}) and (\ref{ue2v}) respectively have the same sign and growth and therefore the lemmata \ref{base}, \ref{we}, \ref{L6}, \ref{W4} and \ref{Cor1} that only concern the last two unknowns still hold true for the system (\ref{ue1z})-(\ref{ue3w}). Moreover, without essential changes in the proof, we infer that the
$L^\infty(\Omega)$-bound for $z_\e$ holds similarly to the lemma \ref{Cor2}.
\begin{lemma}\label{Cor21}
 For space dimension $N\leq 2$ there is a constant $\bar{z}_\infty$ such that 
 \begin{equation} \label{u121}
     \sup_{t>0}\Vert z_\e\Vert_\infty \leq \bar{z}_\infty\,.
 \end{equation}
\end{lemma}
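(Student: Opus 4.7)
The plan is to follow the argument of Lemma \ref{Cor2} almost verbatim, with the only substantive modification being an analog of Lemma \ref{uk} adapted to the predator equation (\ref{ue1z}). Namely, I would first establish that for every $k \in [2,\infty)$ there exists a constant $\bar{z}_k$ independent of $\e \in (0,1)$ with $\sup_{t>0} \Vert z_\e(\cdot,t)\Vert_k \leq \bar{z}_k$. Testing (\ref{ue1z}) against $z_\e^{k-1}$ and integrating produces the same structural inequality as in the proof of Lemma \ref{uk}: the chemotactic term enters only through $|\nabla w_\e||\nabla u_\e|$-type factors, so the opposite sign of the advective flux is immaterial. The reaction contribution is controlled by noting that $\mu_1 z_\e - \mu_1' z_\e^2 + bF(v_\e)z_\e \leq (\mu_1 + bC_F\bar{v}_\infty)z_\e - \mu_1' z_\e^2$ thanks to (\ref{Funct}) and (\ref{vwinf1}), and the strict self-limitation $\mu_1'>0$ yields after Hölder the superlinear dissipation $-\frac{\mu_1'}{2|\Omega|^{1/k}}(\int_\Omega z_\e^k)^{1+1/k}$, absorbing the linear part modulo a constant. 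From there the Gagliardo--Nirenberg inequalities (\ref{GN1})--(\ref{GN2}), the uniform bounds on $\nabla w_\e$ from Lemma \ref{L6}, and Lemma \ref{LB} close the estimate exactly as in the competition case.

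With this analog of Lemma \ref{uk} in hand, I would reproduce the three-step scheme of Lemma \ref{Cor2}. Step one: since $\mu_2 v_\e(1-v_\e) - F(v_\e)z_\e$ lies in $L^\infty(0,T:L^k(\Omega))$ for every $k\geq 1$ (by (\ref{Funct}), (\ref{vwinf1}) and the newly derived $L^k$-bound on $z_\e$), the $L^p$--$L^q$ semigroup estimate (\ref{zip}) applied to the $v$-equation yields $\sup_{t>0}\Vert \nabla v_\e(\cdot,t)\Vert_4 \leq K_4$. Step two: combining this with (\ref{v3}) and Lemma \ref{Cor1} (which remain valid because the $w$-equation is unchanged) gives $\sup_{t>0}\Vert \nabla w_\e(\cdot,t)\Vert_4 \leq \tilde{K}_4$. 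Step three: Hölder together with the $L^k$-bound on $z_\e$ yields $\sup_{t>0}\Vert z_\e\nabla w_\e\Vert_p < \infty$ for some $p \in (3,4)$, in particular $p>N$ since $N\leq 2$.

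To conclude, I would apply (\ref{zinf}) to (\ref{ue1z}) with $Q = -z_\e\nabla w_\e$ and a pointwise majorant $f_+$ of the zero-order reaction. A convenient choice is
\begin{equation*}
f_+ \;=\; \frac{\bigl((\mu_1 + bC_F\bar{v}_\infty)_+\bigr)^2}{4\mu_1'}\,,
\end{equation*}
which dominates $\mu_1 z_\e - \mu_1' z_\e^2 + bF(v_\e)z_\e$ thanks to (\ref{Funct}), (\ref{vwinf1}) and the elementary bound $\alpha s - \beta s^2 \leq \alpha_+^2/(4\beta)$ for $s\geq 0$. This delivers (\ref{u121}).

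The main obstacle is the first step: in Lemma \ref{uk} the cross-reaction term $-\mu_1 a_1 u_\e v_\e$ had a favorable sign and was simply dropped, whereas here the corresponding term $+bF(v_\e)z_\e$ has the opposite sign and cannot be discarded. The key observation rescuing the argument is that the standing assumption $\mu_1'>0$ provides the quadratic self-limitation needed to absorb this contribution via Hölder; without this hypothesis the chain of estimates would break down. Once this point is settled, the remainder of the proof is essentially a transcription of the competition-model argument.
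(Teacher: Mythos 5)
Your proposal is correct and follows essentially the same route as the paper: adapt Lemma \ref{uk} by absorbing the extra term $bF(v_\e)z_\e\leq bC_F\bar v_\infty z_\e$ into the quadratic damping supplied by $\mu_1'>0$, then rerun the three-step argument of Lemma \ref{Cor2} ending with (\ref{zinf}) and a constant majorant $f_+$ of the reaction term. Your choice $f_+=\bigl((\mu_1+bC_F\bar v_\infty)_+\bigr)^2/(4\mu_1')$ is in fact slightly more careful than the paper's stated $f_+=(\mu_1+C_F\bar v_\infty)^2/4$, which omits the factors $b$ and $\mu_1'$.
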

\begin{proof}
 Notice first that the proof of Lemma \ref{uk} demands only minor modifications related to the term $C_F\bar{v}_\infty\int_\Omega z^k_\e$ that ultimately lead to (\ref{llap}) for the case $N=2$ and to (\ref{llap1}) for the case $N=1$. Therefore, we may proceed in the same way as in the proof of Lemma \ref{Cor1}, the only difference being the use of inequality (\ref{zinf}), because now $f_+=\frac{(\mu_1 +C_F\bar{v}_\infty)^2}{4}$.
\end{proof}
To adapt Lemma \ref{UVW} to the case of prey-predator model we only replace $u_\e$ by $z_\e$ and 
$U_\e$ by $Z_e=z_e-z$ with minor changes in the proof, which amounts to replacing (\ref{Us1}) by 
 \begin{align*}
Z_{\varepsilon t}- d_z\Delta Z_{\varepsilon } =&  
\nabla \cdot \left(\chi ( Z_{\e} \nabla  w_\e  + z \nabla  W_{\e} \right) \\
&+ (\mu_1 -\mu_1'(z_\e+z))Z_{\e} +F(v_\e)Z_\e +z(F(v_e)-F(v)) Z_\e
\end{align*}
and noticing that after  multiplying  by $Z_{\varepsilon}$ and integrating  by parts we obtain qualitatively the same inequality as (\ref{Us1}) namely 
  \begin{align*}
 \frac{1}{2} \frac{d}{dt} 
 \int_{\Omega} Z_{\varepsilon}^2
 + \frac{d_z}{2} 
 \int_{\Omega} |\nabla Z_{\varepsilon}|^2
 \leq & \displaystyle   \frac{\chi^2}{d_z} \left(
 \int_{\Omega} Z_{\varepsilon}^2 |\nabla w_{\varepsilon} |^2
 +  \int_{\Omega} z^2 |\nabla W_{\varepsilon} |^2\right) \\ 
&+(\mu_1 + C_F\bar{v}_\infty) \int_{\Omega} Z_{\e}^2  +  \bar{z}_\infty L \int_{\Omega} Z_\e V_\e dx \,.
 \end{align*}
Next, only minor changes are needed to adapt the remaining part of the proof of Lemma \ref{UVW} and that of Theorem \ref{L22} in which it is assumed that 
\begin{equation}\label{ic1}
 z(\cdot,0)=z_\e(\cdot,0)=u_0(\cdot)\,, v(\cdot,0)=v_\e(\cdot,0)=w_\e(\cdot,0)=v_0(\cdot,0) .
 \end{equation} 
After adaptation of results from section 4 to the case of predator-prey model we obtain the following result 
\begin{theorem} \label{TheoPred}
\noindent
\begin{itemize}
    \item[i)] If  $N\leq 2$ and (\ref{ic1}) is satisfied then letting $\e\rightarrow 0$ in the weak formulation of (\ref{ue1z})-(\ref{ue3w}) one  obtains the weak solution $(z,v)$
    to (\ref{uu1z})-(\ref{uu2v}) which satisfies for any $T>0$
    $$z,v\in W^{1,2}(0,T:L^2(\Omega))\cap L^2(0\,,T:W^{1,2}(\Omega))\cap L^\infty(0,T:L^\infty(\Omega)$$ 
 and 
 \begin{align}\label{zw1}
     \int_{0}^T \int_{\Omega} z_{t} \varphi &= -d_z \int_{0}^T \int_{\Omega} \nabla z \nabla \varphi
+ \chi \int_{0}^T \int_{\Omega} z \nabla v \nabla \varphi \\ \nonumber
& +  \int_{0}^T \int_{\Omega} (\mu_1 z - \mu_1'z^2 + bF(v)z) \varphi\\ \label{vw21}
 \int_{0}^T \int_{\Omega} v_{t} \psi 
=&   \int_{0}^T \int_{\Omega} \nabla v \nabla \psi +
\int_{0}^T \int_{\Omega} (\mu_2v(1-v) - F(v)z) ) \psi
\end{align}
 for all $\varphi\,,\psi\in L^2(0\,, T: W^{1,2}(\Omega))$.
 \item[ii)] If only $N\leq 2$ then the weak limit $(z,v)$ in the aforementioned sense is attained in the limit for a subsequence $\{\e_j\}_{j\in\N}$ extracted from $\e$.
 \item[iii)]If $N\geq 3$ then there is a subsequence extracted from $\e$ which converges to $(z,v)$ satisfying (\ref{uu1z})-(\ref{uu2v}) in the following very weak sense 
 \begin{align*}
  - \int_{0}^T \int_{\Omega} z \varphi_t dxdt &-\int_\Omega z_0\varphi(x,0) dx
=   d_z \int_{0}^T \int_{\Omega}z \Delta \varphi dxdt  
+\chi \int_{0}^T \int_{\Omega} z \nabla v \nabla \varphi dxdt  \\
&+\int_{0}^T \int_{\Omega} (\mu_1 z - \mu_1'z^2 + bF(v)z) \varphi dxdt\\
  \int_{0}^T \int_{\Omega} v_{t} \psi dxdt
&=  d_v \int_{0}^T \int_{\Omega} \nabla v\nabla \psi  dxdt 
+  \mu_2\int_{0}^T \int_{\Omega}(\mu_2v(1-v) - F(v)z)) \psi dxdt
    \end{align*}
for $\varphi \in C^\infty_0(\bar{\Omega}\times [0\,,T))$ and $\psi\in L^2(0,T: W^{1,2}(\Omega))$ where $C^\infty_0(\bar{\Omega}\times [0\,,T))$ is the space of smooth functions with compact support in $\bar{\Omega}\times [0\,,T)$. 
\end{itemize}  
\end{theorem}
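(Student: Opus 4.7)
The plan is to transfer the three-step program of Sections~2--4 to the prey-predator system, exploiting the groundwork already laid in Lemmas~\ref{L11} and \ref{Cor21}, together with the author's observation that the $v$- and $w$-energy estimates of Lemmas~\ref{base}--\ref{Cor1} carry over verbatim because $-F(v_\e)z_\e$ plays the same structural role as $-a_2\mu_2 u_\e v_\e$ (non-positive sign and, by (\ref{Funct}), linear growth in $v_\e$). The only genuinely new ingredient at every step is the appearance of the functional response $F$, which by (\ref{Funct}) is controlled through its Lipschitz constant $L$ and its linear-growth constant $C_F$.

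For part~(i), I would reproduce the error analysis of Lemma~\ref{UVW} and Theorem~\ref{L22} with $Z_\e = z - z_\e$, $V_\e = v - v_\e$, $W_\e = v - w_\e$. Testing the subtracted $z$-equation by $Z_\e$ gives the same structure as (\ref{Us1}): the chemotactic contribution is handled exactly as before via (\ref{we4}) and Gagliardo--Nirenberg, while the new reaction difference splits as $b(F(v)z - F(v_\e)z_\e) = b(F(v) - F(v_\e))z + bF(v_\e)Z_\e$ and is bounded by $bL\bar{U}_\infty |V_\e| + bC_F\bar{v}_\infty|Z_\e|$ thanks to (\ref{Funct}), (\ref{vwinf1}) and (\ref{u121}). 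The analogous treatment of the $V$- and $W$-equations (the latter now involving $v_t$ with a reaction term $F(v)z$ that is $L^\infty$-bounded, so (\ref{vdelta}) still applies) yields a Gronwall-type inequality for $\int_\Omega Z_\e^2 + \int_\Omega V_\e^2 + \int_\Omega |\nabla V_\e|^2 + \e\gamma_2\int_\Omega|\nabla W_\e|^2$ with right-hand side of order $\e$, whence the convergences analogous to (\ref{p1})--(\ref{p3}) and, upon passing to the weak limit in the distributional formulation, (\ref{zw1})--(\ref{vw21}).

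For part~(ii), dropping (\ref{ic1}), I would replay the compactness reasoning of Theorem~\ref{lnu2}: Lemma~\ref{Cor21} allows testing the $z$-equation by $z_\e$ to obtain an $L^2(0,T;W^{1,2}(\Omega))$-bound, and combined with a dual bound on $z_{\e,t}$ this produces strong $L^2$-compactness via Aubin--Lions. Passage to the limit in the weak formulation uses the Lipschitz property of $F$ to handle $F(v_\e) \to F(v)$, and a straightforward adaptation of Theorem~\ref{uniq} in which the Lotka--Volterra nonlinearity is replaced by a Lipschitz-continuous functional response upgrades subsequential to full-sequence convergence. For part~(iii) with $N \geq 3$ the $L^\infty$ bound on $z_\e$ is unavailable, so I would follow Theorem~\ref{Lw22}: the logarithmic estimate of Lemma~\ref{lnu} extends because the extra source contributes $bF(v_\e)z_\e/(1+z_\e) \leq bC_F\bar{v}_\infty z_\e$, which is harmless, and analogously (\ref{L43})--(\ref{ln43}) survive. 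The limit is then reached in the very weak sense via strong $L^{4/3}(\Omega_T)$ convergence of $z_\e$ together with strong convergence of $\nabla w_\e$ to $\nabla v$ in $L^2(\Omega_T)$.

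The main obstacle I anticipate is controlling the additional growth source $bF(v_\e)z_\e$ in the $L^k$ estimates for $z_\e$: unlike in the competition model, this is a genuine positive contribution to the evolution of $\int_\Omega z_\e^k$ rather than a sink. However, (\ref{Funct}) combined with (\ref{vwinf1}) shows that it acts linearly in $z_\e$ with $L^\infty$-bounded coefficient, so it can be absorbed by the dissipation $-\mu_1' z_\e^{k+1}$ produced by the logistic self-limitation term. This is precisely why the hypothesis $\mu_1' > 0$ (i.e. intraspecific predator competition) is indispensable for the method, explaining the authors' explicit emphasis on this assumption; the borderline case $\mu_1' = 0$ of standard predator-prey kinetics lies genuinely outside our framework.
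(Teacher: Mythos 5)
Your proposal is correct and follows essentially the same route as the paper: transfer the $\e$-independent estimates (Lemmas \ref{L11}, \ref{Cor21} plus the unchanged $v$- and $w$-lemmas), rerun the error analysis of Lemma \ref{UVW} and Theorem \ref{L22} with the reaction difference split via the Lipschitz and linear-growth properties of $F$ for part (i), and replay the compactness arguments of Theorems \ref{lnu2} and \ref{Lw22} for parts (ii) and (iii), with the growth source $bF(v_\e)z_\e$ absorbed by the $\mu_1'>0$ self-limitation exactly as you describe. The only immaterial deviations are the sign convention $Z_\e=z-z_\e$ versus the paper's $Z_\e=z_\e-z$ and your optional uniqueness upgrade to full-sequence convergence in part (ii), which the statement does not require.
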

The same arguments as in Theorem \ref{reg}, based on the regularity theory of parabolic systems, lead to the following result which provides an alternative proof of the existence of solutions to the predator-prey system for $N\leq 2$.    
\begin{corollary}
 $z_0\,, v_0\in C^{2+\alpha}(\bar{\Omega})$ and $N\leq 2$, then the solution to (\ref{zw1})-(\ref{vw21}) satisfies
\[(z,v)\in (C^{2+\alpha,1+\frac{\alpha}{2}}(\bar{\Omega}\times [0\,,T)))^2. \]
\end{corollary}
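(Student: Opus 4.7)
The plan is to follow exactly the bootstrap used in Theorem \ref{reg} with the modifications forced by the predator--prey nonlinearities; the Lipschitz assumption (\ref{Funct}) on $F$ will make the adaptation essentially routine. To start, Theorem \ref{TheoPred}(i) combined with Lemma \ref{L11} and Lemma \ref{Cor21} already provides $z,v\in L^\infty(\Omega_T)$, so that using (\ref{Funct}) the forcing of the $v$-equation satisfies
\[\mu_2 v(1-v)-F(v)z\in L^\infty(0,T:L^\infty(\Omega))\subset L^p(0,T:L^p(\Omega))\quad\text{for every }p\in[1,\infty].\]
An application of $L^p$-maximal regularity (Theorem 48.2 and Remark 48.3(ii) in \cite{qs}) then upgrades $v$ to a strong solution with $v\in W^{1,p}(0,T:L^p(\Omega))\cap L^p(0,T:W^{2,p}(\Omega))$ for any finite $p$, and by the smoothing estimate (\ref{zip}) together with the Sobolev embedding for $N\leq 2$ we also secure $\nabla v\in L^\infty(0,T:L^\infty(\Omega))$.

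With $\nabla v$ bounded, I would rewrite the advective term in the $z$-equation in non-divergence form,
\[-\chi\nabla\cdot(z\nabla v)=-\chi\nabla z\cdot\nabla v-\chi z\Delta v.\]
Since $z\in L^\infty$ and $\Delta v\in L^p(\Omega_T)$ for all finite $p$, and the reaction $\mu_1 z-\mu_1'z^2+bF(v)z$ is in $L^\infty$ by the boundedness of $z,v$ and (\ref{Funct}), the right-hand side of the $z$-equation (viewed in non-divergence form) lies in $L^q(0,T:L^q(\Omega))$ for every $q\leq 2$. Hence $z_t-d_z\Delta z\in L^2(0,T:L^2(\Omega))$, and a second use of $L^p$-maximal regularity yields
\[z\in W^{1,2}(0,T:L^2(\Omega))\cap L^2(0,T:W^{2,2}(\Omega)),\]
which in particular gives $\nabla z\in L^r(\Omega_T)$ for a range of exponents $r>2$ via Sobolev embedding in dimension $N\leq 2$.

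Having both $\nabla v$ and $\nabla z$ in sufficiently high-integrability spaces, the argument is closed by the standard iterative improvement in parabolic H\"older spaces as in \cite[Chapter VII]{Lady} (again cf.\ Remark 48.3(ii) in \cite{qs}): the Lipschitz assumption on $F$ guarantees that once $(z,v)$ is H\"older continuous then $F(v)z$ and $v(1-v)$ belong to the same H\"older class, and then $\nabla\cdot(z\nabla v)$ has the required regularity to invoke classical Schauder theory for each equation in turn, matching the $C^{2+\alpha}(\bar\Omega)$ initial data. The only genuine obstacle, identical to the one in Theorem \ref{reg}, is extracting enough regularity of $\nabla v$ from merely $L^\infty$ data so that the transport term $\nabla\cdot(z\nabla v)$ can be absorbed; once that first maximal-regularity step is in place, the remaining bootstrap is mechanical and ends in $(z,v)\in\bigl(C^{2+\alpha,1+\alpha/2}(\bar\Omega\times[0,T))\bigr)^2$.
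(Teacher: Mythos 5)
Your proposal is correct and follows essentially the same route as the paper, which proves this corollary simply by invoking the bootstrap of Theorem \ref{reg} (maximal $L^p$ regularity for the $v$-equation, then $\nabla v\in L^\infty$, then $L^2$ regularity for the $z$-equation, then the classical Schauder iteration of \cite[Chapter VII]{Lady}); your adaptation correctly identifies that the only changes needed are the $L^\infty$ bounds from Lemmas \ref{L11} and \ref{Cor21} and the growth/Lipschitz hypothesis (\ref{Funct}) on $F$ to keep the reaction terms in the required spaces.
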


\end{document}